\newcommand{\we}{\wedge}
\newcommand{\ol}{\overline}
\newcommand{\eps}{\varepsilon}
\newcommand{\ba}{\begin{array}}
	\newcommand{\ea}{\end{array}}
\newcommand{\be}{\begin{equation}}
\newcommand{\ee}{\end{equation}}
\newcommand{\bea}{\begin{eqnarray}}
\newcommand{\eea}{\end{eqnarray}}
\newcommand{\beaa}{\begin{eqnarray*}}
	\newcommand{\eeaa}{\end{eqnarray*}}
\def\dbE{\mathbb{E}}
\def\dbL{\mathbb{L}}
\def\dbN{\mathbb{N}}
\def\dbP{\mathbb{P}}
\def\dbR{\mathbb{R}}
\def\a{\alpha}
\def\b{\beta}
\def\g{\gamma}
\def\d{\delta}
\def\e{\varepsilon}
\def\z{\zeta}
\def\l{\lambda}
\def\m{\mu}
\def\si{\sigma}
\def\t{\tau}
\def\f{\varphi}
\def\D{\Delta}
\def\cC{{\cal C}}
\def\cE{{\cal E}}
\def\cF{{\cal F}}
\def\cK{{\cal K}}
\def\cP{{\cal P}}
\def\cS{{\cal S}}
\def\cW{{\cal W}}
\def\no{}
\def\ms{\medskip}
\def\q{\quad}
\def\pa{\partial}
\def\cd{\cdot}
\newcommand{\basa}{\begin{assumption}}
	\newcommand{\easa}{\end{assumption}}
\newcommand{\bas}{\begin{assum}}
	\newcommand{\eas}{\end{assum}}
\def\limsup{\mathop{\overline{\rm lim}}}
\def\liminf{\mathop{\underline{\rm lim}}}
\def\pa{\partial}
 \def\cd{\cdot}
\def\1{{\bf 1}}
\def\:{\!:\!}
\newtheorem{thm}{Theorem}[section]
\newtheorem{lem}[thm]{Lemma}
\newtheorem{cor}[thm]{Corollary}
\newtheorem{prop}[thm]{Proposition}
\newtheorem{rem}[thm]{Remark}
\newtheorem{eg}[thm]{Example}
\newtheorem{assum}[thm]{Assumption}
\DeclareMathOperator*{\argmin}{\arg\!\min}
\DeclareMathOperator{\Tr}{Tr}
\numberwithin{equation}{section}
\numberwithin{thm}{section}
\begin{document}

\title{Mean-Field Langevin Dynamics and Energy Landscape of Neural Networks}
\author{Kaitong HU \footnote{CMAP, \'Ecole Polytechnique, F-91128 Palaiseau Cedex, France.} \and
	\and Zhenjie REN \footnote{CEREMADE, Universit\'e Paris Dauphine, F-75775 Paris Cedex 16, France. } \and
	David \v{S}I\v{S}KA \footnote{School of Mathematics, University of Edinburgh, James Clerk Maxwell Building, Peter Guthrie Tait Road, 
		Edinburgh EH9 3FD, UK.} \and
	{\L}ukasz SZPRUCH \footnote{School of Mathematics, University of Edinburgh, James Clerk Maxwell Building, Peter Guthrie Tait Road, 
		Edinburgh EH9 3FD, UK.}
}
\maketitle

\def\L{\Lambda}
\begin{abstract}
	Our work is motivated by a desire to study the  theoretical underpinning for the convergence of stochastic gradient type algorithms widely used for non-convex learning tasks such as training of neural networks. 
	The key insight, already observed in \cite{mei2018mean,
		chizat2018global,rotskoff2018neural}, is that a certain class of the finite-dimensional non-convex problems becomes convex when lifted to infinite-dimensional space of measures.  We leverage this observation and show that the corresponding energy functional defined on the space of probability measures has a unique minimiser which can be characterised by a first-order condition using the notion of linear functional derivative. 
	Next, we study the corresponding gradient flow structure in 2-Wasserstein metric, which we call Mean-Field Langevin Dynamics (MFLD), and show that the flow of marginal laws induced by the  gradient flow converges to a stationary distribution, which is exactly the minimiser of the energy functional. We observe that this convergence is exponential under conditions that are satisfied for highly regularised learning tasks. 
	Our proof of convergence to stationary probability measure is novel and it relies on a generalisation of LaSalle's invariance principle combined with HWI inequality. 
	Importantly, we assume neither that interaction potential of MFLD is of convolution type nor that it has any particular symmetric structure. 
	Furthermore, we allow for the general convex objective function, unlike, most papers in the literature that focus on quadratic loss.  
	Finally, we show that the error between finite-dimensional optimisation problem and its infinite-dimensional limit is of order one over the number of parameters.
\end{abstract}

\begin{otherlanguage}{french}
\begin{abstract}
	L'objectif de nos travaux est d'\'etudier le fondement th\'eorique pour la convergence des algorithmes du type gradient stochastique, qui sont tr\`es souvent utilis\'es dans les probl\`emes d'apprentissage non-convexe, e.g. calibrer un réseau de neurones. L'observation cl\'e, qui a d\'ej\`a \'et\'e remarqu\'ee dans \cite{mei2018mean,chizat2018global,rotskoff2018neural}, est qu'une certaine classe de probl\`emes non-convexes fini-dimensionnels devient convexe une fois inject\'ee dans l'espace des mesures de probabilit\'e. \`A l'aide de cette observation nous montrons que la fonction d'\'energie correspondante d\'efinie dans l'espace des mesures de probabilit\'e a un unique minimiser qui peut \^etre caract\'eris\'e par une condition de premier ordre en utilisant la notion de d\'eriv\'ee fonctionnelle. Par la suite, nous \'etudions la structure de flux de gradient avec la m\'etrique de 2-Wasserstein, que nous appelons la dynamique de Langevin au champs moyen (MFLD), et nous montrons que la loi marginale du flux de gradient converge vers une loi stationnaire  qui correspond au minimiser de la m\^eme fonction d'\'energie pr\'ec\'edente. Sous certaines conditions de r\'egularit\'e du probl\'eme initial, la convergence a lieu \`a une vitesse exponentielle . Nos preuves de la convergence vers la loi stationnaire est nouvelle, qui reposent sur le principe d'invariance de LaSalle et l'in\'egalit\'e HWI. Remarquons que nous ne supposons pas que l'interaction potentielle de MFLD soit du type convolution ou sym\'etrique. De plus, nos r\'esultats s'appliquent aux fonctions d'objectif convexes g\'en\'erales contrairement aux beaucoup de papiers dans la litt\'erature qui se limitent aux fonctions quadratiques. Enfin, nous montrons que la diff\'erence entre le probl\'eme initial d'optimisation fini-dimensionnel et sa limite dans l'espace des mesures de probabilit\'e est de l'ordre d'un sur le nombre de param\`etres.  
\end{abstract}
\end{otherlanguage}

\paragraph*{Keywords:}Mean-Field Langevin Dynamics, Gradient Flow, Neural Networks 

\paragraph*{MSC:} 60H30, 37M25 
	
\section{Introduction}

Neural networks trained with stochastic gradient descent algorithm proved to be extremely successful in number of applications such as computer vision, natural language processing, generative models or reinforcement learning~\cite{lecun2015deep}. 
However, complete mathematical theory that would provide theoretical guarantees for the convergence 
of machine learning algorithms for non-convex learning tasks has been elusive. 
On the contrary, empirical experiments demonstrate that classical learning theory \cite{vapnik2013nature} may fail to predict the behaviour of modern machine learning algorithms \cite{zhang2016understanding}. In fact, it has been observed that the performance of neural networks based algorithms is insensitive to the number of parameters in the hidden layers (provided that this is sufficiently large)  and in practice one works with models that have number of parameters larger than the size of the training set \cite{hastie2019surprises,belkin2018does}. These findings motivate the study of neural networks with large number of parameters which is a subject of this work.  

Furthermore while universal representation theorems ensures the existence of the optimal parameters of the network, it is in general not known when such optimal parameters can be efficiently approximated by conventional algorithms, such as stochastic gradient descent. This paper aims at revealing the intrinsic connection between the optimality of the network parameters and the dynamic of gradient-descent-type algorithm, using the perspective of the mean-field Langevin equation. 

This work builds on the rigorous mathematical framework to study non-convex learning tasks such as training of neural networks developed
in Mei, Misiakiewicz and Montanari~\cite{mei2018mean}, Chizat and Bach~\cite{chizat2018global}, Sirignano and Spiliopoulos ~\cite{sirignano2018mean} 
as well as Rotskoff and Vanden-Eijnden~\cite{rotskoff2018neural}.

We extend some existing results and provide a novel proof technique for mathematical results which provide a theoretical underpinning for the convergence of stochastic gradient type algorithms widely used in practice to train neural networks. We demonstrate how our results apply to a situation when one aims to  train one-hidden layer neural network with (noisy) stochastic gradient algorithm. 

Let us first briefly recall the classical finite dimensional Langevin equation. Given a {\it potential} function $f:\dbR^d\rightarrow\dbR$ which is Lipschitz continuous and satisfies appropriate growth condition, the overdamped Langevin equation reads
\bea\label{classicLangevin}
\mathrm{d}X_t = -\nabla f(X_t) \mathrm{d}t + \si \mathrm{d}W_t,
\eea 
where $\si$ is a scalar constant and $W$ is a $d$-dimension Brownian motion. One can view this dynamic in two perspectives:
\begin{enumerate}[i)]
	\item The solution to \eqref{classicLangevin} is a time-homogeneous Markov diffusion, so under mild condition it admits a unique invariant measure $m^{\si,*}$, of which the density function must be in the form
	\beaa
	m^{\si,*}(x) = \frac1Z \exp\left(-\frac{2}{\si^2} f(x)\right), \q\mbox{for all}\q x\in \dbR^d\,,\,\,\,\text{where}\,\, Z:= \int_{\mathbb R^d} \exp\left(-\frac{2}{\si^2} f(x)\right)  \,\mathrm{d}x\,.
	\eeaa
	
	\item The dynamic  \eqref{classicLangevin} can be viewed as the  path of a  randomised continuous time gradient descent algorithm. 
\end{enumerate}
These two perspectives are unified through the variational form of the invariant measure, namely, $m^{\si,*}$ is the unique minimiser of the free energy function 
\[
V^\si(m) := \int_{\mathbb R^d} f(x) m(\mathrm{d}x) + \frac{\si^2}{2} H(m)
\]
over all probability measure $m$, where $H$ is the relative entropy with respect to the Lebesgue measure. The variational perspective has been established in \cite{jordan199618} and \cite{JKO98}. Moreover, one may observe that the distribution $m^{\si,*}$ concentrates to the Dirac  measure $\d_{\arg\min f}$  as $\si\rightarrow 0$ and there is no need to  assume that the function $f$ is convex. This establishes the link between theory of statistical sampling and optimisation and show that Langevin equation plays an important role in the non-convex optimisation. This fact is well-recognized by the communities of numerical optimisation and machine learning \cite{hwang1980laplace,holley1989asymptotics,holley1988simulated}

This paper aims at generalising the connection between the global minimiser and the invariant measure to the case where the {\it potential} function is a function defined on a space of probability measures.
This is motivated by the following observation on the configuration of neural network. Let us take the example of the network with $1$-hidden-layer. While the universal representation theorem, \cite{cybenko1989approximation,barron1993universal} tells us that $1$-hidden-layer network can arbitrarily well approximate the continuous function on the compact time interval it does not tell us how to find optimal parameters. One is faced with the following non-convex optimisation problem. 
\begin{equation}
	\label{intro:min0}
	\min_{\beta_{n,i}\in\dbR, \a_{n,i}\in \dbR^{d-1}} \left\{  \int_{\dbR\times\dbR^{d-1}} \Phi\Big( y -  \frac1n\sum_{i=1}^n \beta_{n,i} \f(\a_{n,i}\cd z) \Big) \nu(\mathrm{d}y,\mathrm{d}z) \right\},	
\end{equation}
where $\Phi:\dbR\rightarrow\dbR$ is a convex function,  $\f:\dbR\rightarrow\dbR$ is a bounded, continuous, non-constant activation function and $\nu$ is a measure of compact support representing the data. 
Let us define the empirical law of the parameters as
$m^n:= \frac1n\sum_{i=1}^{n} \delta_{\{ \beta_{n,i},\a_{n,i} \}}$. 
Then
\[
\frac1n\sum_{i=1}^n \beta_{n,i} \f(\a_{n,i}\cd z) = \int_{\mathbb R^d} \beta \f(\a\cd z) \,m^n(\mathrm{d}\beta,\mathrm{d}\alpha)\,.
\]
To ease notation let us use, for $x=(\beta,\alpha)\in \mathbb R^d$, the function $\hat \varphi(x,z) := \beta\varphi(\alpha \cdot z)$, and by $\dbE^m$ we denote the expectation of random variable $X$ under the probability measure $m$.
Now, instead of~\eqref{intro:min0}, we propose to study the following minimisation problem over the probability measures:
\bea\label{intro:min}
\min_{m} F(m),\q\mbox{with}\q F(m):= \int_{\dbR^d} \Phi\Big( y -  \dbE^m\big[ \hat \varphi(X,z)\big] \Big) \, \nu(\mathrm{d}y,\mathrm{d}z),
\eea 
This reformulation is crucial, because the {\it potential} function $F$ defined above is convex in the measure space
i.e. for any probability measures $m$ and $m'$ it holds that
\[
F((1-\alpha)m + \alpha m') \leq (1-\alpha)F(m) + \alpha F(m') \,\,\, \text{for all}\,\,\, \alpha \in [0,1]\,.
\]
This example demonstrates that a non-convex minimisation problem on a finite-dimensional parameter space becomes a convex minimisation problem when lifted
to the infinite dimensional space of probability measures. 
The key aim of this work is to provide analysis that takes advantage of this observation. 

In order to build up the connection between the global minimiser of the convex potential function $F$ and the upcoming mean-field Langevin equation, as in the classic case, we add the relative entropy $H$ as a regulariser, but different from the classic case, we use the relative entropy with respect to a Gibbs measure of which the density is proportional to $e^{-U(x)}$. A typical choice of the Gibbs measure could be the standard Gaussian distribution. 
One of our main contributions is to characterise the minimiser of the free energy function 
\[
V^{\sigma}:= F + \frac{\si^2}{2} H
\] 
using the  {\it linear functional derivative} on the space of probability measures,
denoted by $\frac{\d }{\d m}$ 
(introduced originally in calculus of variations
and now used extensively in the theory of mean field games see, e.g. Cardaliaguet et al.~\cite{cardaliaguet2015master}). 
Indeed, we prove the following first order condition:
\beaa
m^* =\arg\min_m V^{\sigma}(m) \q 
\text{if and only if} \q 
\frac{\d F}{\d m}(m^* , \cd)  + \frac{\si^2}{2}  \log(m^{\ast} )+ \frac{\sigma^2}{2} \, U = \mbox{\it constant}.
\eeaa
This condition together with the fact that $m^{\ast}$ is a probability measure gives 
\[
m^{\ast}(x) = \frac{1}{Z} \exp{\left( - \frac{2}{\sigma^2 }\left(\frac{\d F}{\d m}(m^* , x) + U(x) \right)  \right)}\,,
\]
where $Z$ is the normalising constant. We emphasise that throughout $V$ and hence $m^\ast$ depend on the regularisation parameter $\sigma>0$.
It is noteworthy that the variational form of the invariant measure of the classic Langevin equation is  a particular example of this first order condition. Moreover, given a measure $m^{*}$ satisfying the first order condition, it is formally a stationary solution to the nonlinear Fokker--Planck equation:
\begin{equation} \label{eq pdeintro}
	\pa_t m = \nabla\cd \bigg( \Big(D_m F(m,\cd) + \frac{\sigma^2}{2} \, \nabla U\Big) m +  \frac{\si^2}{2} \nabla m\bigg),
\end{equation}
where $D_m F$ is the {\it intrinsic derivative} on the probability measure space, defined as $D_mF(m,x):= \nabla \frac{\d F}{\d m}(m,x)$. Clearly, the particle dynamic corresponding to this Fokker-Planck equation is governed by the {\it mean field Langevin equation}:
\bea\label{eq:langeintro}
\mathrm{d}X_t = -  \Big(D_m F(m_t , X_t ) + \frac{\sigma^2}{2} \, \nabla U(X_t) \Big) \mathrm{d}t +\si \mathrm{d}W_t, \q\mbox{where}~~ m_t:= \mbox{\rm Law}(X_t).
\eea
Therefore, formally, we have already obtained the correspondence between the minimiser of the free energy function and the invariant measure of \eqref{eq:langeintro}. In this paper, the connection is rigorously proved mainly with a probabilistic argument. 

For the particular application to the neural network \eqref{intro:min}, it is crucial to observe that the dynamics corresponding to the mean field Langevin dynamics describes exactly the path of the randomised regularized gradient-descent algorithm. More precisely, consider the case where 
we are given data points $(y_m, z_m)_{m\in \mathbb N}$ which are i.i.d. samples from $\nu$. If the loss function $\Phi$ is simply the square loss then a version of the (randomized, regularized) gradient descent algorithm for the evolution of parameter $x_k^i$ will simply read as
\begin{equation}\label{eq sgd}
	x^i_{k+1} = x^i_k + 2\tau\left( \bigg(y_k - \frac1N \sum_{j=1}^N\hat \varphi(x_k^j, z_k) \bigg) \nabla \hat \varphi(x_k^i, z^k)  - \frac{\sigma^2}{2}  \nabla U(x^i_k) \right) + \si \sqrt{\tau}\xi_k^i\,,
\end{equation}
with $\xi_k^i$ independent samples from $N(0,I_d)$ (for details we refer the reader to Section~\ref{sec gradient descent}). 
This evolution is an approximation of~\eqref{eq:langeintro} and  can be viewed as noisy gradient decent. 
Indeed, in its original form, the classical stochastic gradient decent (also known as the Robins--Monroe algorithm), is given by \eqref{eq sgd} with $\sigma=0$. 

\subsection{Organisation of the Paper}	
The introduction is concluded by Section~\ref{sec lit rev}, where we compare the findings in this paper to those available in the literature, and by Section~\ref{sec meas der} recall some basic notions of measure derivatives. 
All the main results of the paper are presented in Section~\ref{sec main}.
In Section~\ref{sec applications and sgd} we show how the results in Section~\ref{sec main} apply to in the case of gradient descent training of neural networks. 
Section~\ref{sec free energy fn} contains all the proofs of the results concerning the free energy function: $\Gamma$-convergence when $\sigma \to 0$, particle approximation and the first order condition.
In Section~\ref{sec mf lang} we prove required properties of~\eqref{eq pdeintro} and~\eqref{eq:langeintro}, Section~\ref{sec conv} is used to prove the convergence of the  solution to~\eqref{eq pdeintro} to an invariant measure which is the minimizer of the free energy function.   

\subsection{Theoretical Contributions and Literature Review}
\label{sec lit rev}

The study of stationary solutions to nonlocal, diffusive equations \eqref{eq pdeintro} is classical topic with it roots in statistical physics literature and with strong links to Kac's program in Kinetic theory \cite{mischler2013kac}. We also refer reader to excellent monographs \cite{bakry1985diffusions} and \cite{ambrosio2008gradient}. In particular, variational approach has been developed in \cite{carrillo2003kinetic,otto2001geometry,tugaut2013convergence} where authors studied dissipation of entropy for granular media equations with the symmetric interaction potential of convolution type (interaction potential corresponds to term $D_mF$ in \eqref{eq pdeintro}). We also refer a reader to similar results with proofs based on particle approximation of \cite{cattiaux2008probabilistic,veretennikov2006ergodic,bolley2013uniform}, coupling arguments \cite{eberle2019quantitative} and Khasminskii's technique \cite{butkovsky2014ergodic,bogachev2019convergence}. All of the above results  impose restrictive condition on interaction potential or/and require it to be sufficiently small. We manage to relax these assumptions allowing for the interaction potential to be arbitrary (but sufficiently regular/bounded) function of measure. Our proof is probabilisitic in nature. Using Lasalle's invariance principle and the HWI inequality from Otto and Villani \cite{OV00} as the main ingredients, we prove the desired convergence. This approach, to our knowledge, is original, and it clearly justifies the solvability of the randomized/regularized gradient descent algorithm for neural networks.  
Finally we clarify how different notions of calculus on the space of probability measures enter our framework. The calculus is critical to work with arbitrary functions of measure. We refer to \cite[Chapter 5]{Carmona+Rene} for an overview on that topic. The calculus on the measure space enables to derive and quantify the error 
between finite dimensional optimisation problem and its infinite dimensional limit.

Other results are now available for the mean-field description of non-convex learning problems, see~\cite{mei2018mean,mei2019mean,chizat2018global,rotskoff2018neural,javanmard2019analysis,sirignano2018mean}. 
Let us compare this paper to the key results available in the literature. 
There are essentially three, or, if entropic regularization is included, four key ingredients: 
\begin{enumerate}[i)]
	\item \label{itm chaos} that the finite dimensional optimisation problem is approximated by infinite dimensional problem of minimizing over measures (Theorem~\ref{th static particles}),
	\item \label{itm ent} that the regularized problem approximates the original minimization problem (Proposition~\ref{prop:Gamma}),
	\item \label{itm foc} that on the space of probability measures the minimizers (or, if entropic regularization is included, the unique minimizer) satisfy a first order condition (Proposition~\ref{prop:firstorder}),
	\item \label{itm conv} and finally that on the space of probability measures we have a gradient flow that converges with time to the minimizer (Theorem~\ref{thm:convergence}). 
\end{enumerate}

Chizat and Bach~\cite{chizat2018global} work without adding entropic regularization which means that their minimization task is convex but not strictly convex. 
They have results regarding~\ref{itm chaos}) and \ref{itm foc}). 
They have a partial result related to~\ref{itm conv}) in that they prove that if the gradient flow converges to a limit, as $t\to \infty$ then objective function also converge to global minimiser.
To prove this final convergence result they require the assumption that the activation function is homogenous of either order 1 or order 2 and that, essentially, initial law has full support. The setting used in Chizat and Bach~\cite{chizat2018global} is rather different to that of the results in the present paper: in particular we do not need to assume any homogeneity on the activation function and apart mild integrability conditions we do not make assumptions on the initial law. Since we regularize using entropy we obtain convergence of the gradient flow to the minimizer. 

Rotskoff and Vanden-Eijnden~\cite{rotskoff2018neural} again work without entropic regularization and have results~\ref{itm chaos}). 
Moreover they provide Central-Limit-Theorem-type fluctuation results. 
They do show that the output of the network converges to a limit as the time in the gradient flow for the parameter measure goes to infinity. 
However they do not prove convergence of the parameter measure itself as in~\ref{itm conv}).

Sirignano and Spiliopoulos in \cite{sirignano2018mean} provide detail analysis of \ref{itm chaos}), also studying time-discretisation of continuous time gradient flow. In Section~\ref{sec gradient descent}, by using links between intrinsic derivative on the space of measure and its finite dimensional projection we provide further insight on the choice of scalings needed to derive non-trival limit in \cite{sirignano2018mean}.

The setting of our paper is the closest to that of Mei, Misiakiewicz and Montanari~\cite{mei2018mean} in that they use the entropic regularization. 
For a square loss function $\Phi$ and a quadratic regularizer $U$ they prove results on all of~\ref{itm chaos}), \ref{itm ent}), \ref{itm foc}) and~\ref{itm conv}). 
In this paper we allow a general loss function for all the above results(that this is possible is conjectured in Appendix B of \cite{mei2018mean}). 
Due to the special choice of the square loss function, in \cite[Lemma 6.10]{mei2018mean} the authors can compute directly the dynamics of $F(m_t)$ along the flow of measures defined  by \eqref{eq pdeintro}. Instead, we obtain the desired dynamics for much more general $F$ using a pathwise argument based on the It\^o calculus (see Theorem \ref{thm:Vdecrease}). The proof of~\ref{itm conv}) in~\cite[Lemma 6.12]{mei2018mean} is based on the Poincar\'e inequality for the Gaussian distribution and shows that the marginal law weakly  converges. It is not clear whether this argument can be extended to the case with a general regularizer $U$,
whereas this paper develops a new technique based on LaSalle's invariance principle and the HWI inequality, 
which allows us to prove the convergence for general $U$ in the Wasserstein-2 metric, and moreover we observe that in the highly regularized case this convergence is exponential, see Theorem \ref{thm:convergence}.  

\subsection{Calculus on the Space of Probability Measures}
\label{sec meas der}

By $\cP(\dbR^d)$ we denote the space of probability measures on $\dbR^d$, and by $\cP_p(\dbR^d)$ the subspace of $\cP(\dbR^d)$ in which the measures have finite $p$-moment for $p\ge 1$. 
Note that $\pi \in \mathcal P_p(\mathbb R^d \times \mathbb R^d)$ is called a coupling of $\mu$ and $\nu$ in $\mathcal P_p(\mathbb R^d)$, if for any borel subset $B$ of $\mathbb R^d$ we have $\pi(B,\mathbb R^d) = \mu(B)$ and $\pi(\mathbb R^d,B) = \nu(B)$.
By $\cW_p$ we denote the Wasserstein-$p$ metric on $\cP_p(\dbR^d)$, namely,
\beaa
\cW_p (\mu,\nu):= \inf\left\{ \Big(\int_{\dbR^d\times\dbR^d} |x-y|^p \pi(\mathrm{d}x,\mathrm{d}y)\Big)^{\frac1p} ;~ \mbox{$\pi$ is a coupling of $\mu$ and $\nu$}\right\}\q\mbox{for}\q \mu,\nu\in \cP_p(\dbR^d).
\eeaa
It is convenient to recall that
\begin{enumerate}[i)]
	\item $\big(\cP_p(\dbR^d),  \cW_p\big)$ is a Polish space;
	\item $\cW_p(\mu_n, \mu)\rightarrow 0$ if and only if $\m_n$ weakly converge to $\mu$ and $\int_{\dbR^d} |x|^p \mu^n(\mathrm{d}x)\rightarrow \int_{\dbR^d} |x|^p \mu(\mathrm{d}x)$;
	\item for $p'>p$, the set $\{\mu\in \cP_p(\dbR^d):  \int_{\dbR^d} |x|^{p'} \mu(\mathrm{d}x) \le C\}$ is $\cW_p$-compact.
\end{enumerate}

We say a function $F:\cP(\dbR^d)\rightarrow \dbR$ is in $\cC^1$ if there exists a bounded continuous function $\frac{\d F}{\d m}: \cP(\dbR^d)\times \dbR^d \rightarrow \dbR$ such that
\bea\label{C1derv}
F(m') - F(m) = \int_0^1 \int_{\dbR^d} \frac{\d F}{\d m}\big((1-\l)m+\l m', x\big)(m'-m)(\mathrm{d}x)\mathrm{d}\l.
\eea
We will refer to $\frac{\d F}{\d m}$ as the linear functional derivative.
There is at most one $\frac{\d F}{\d m}$, up to a constant shift, satisfying \eqref{C1derv}. To avoid the ambiguity, we impose
\beaa
\int_{\dbR^d} \frac{\d F}{\d m}(m, x)m(\mathrm{d}x) =0.
\eeaa
If $(m,x)\mapsto \frac{\d F}{\d m}(m,x)$ is continuously differentiable in $x$, we define its intrinsic derivative $D_m F: \cP(\dbR^d)\times \dbR^d\rightarrow \dbR^d$ by
\beaa
D_m F(m,x) = \nabla \left( \frac{\d F}{\d m}(m,x) \right).
\eeaa
In this paper $\nabla$ always denotes the gradient in the variable $x\in \dbR^d$.
\begin{eg}\label{eg:linear}
	If $F(m):= \int_{\dbR^d} \phi(x) m(\mathrm{d}x)$ for some bounded continuous function $\phi:\dbR^d \rightarrow\dbR$, we have $\frac{\d F}{\d m}(m,x) = \phi(x)$ and $D_m F(m,x) = \dot \phi(x)$.
\end{eg}

It is useful to see what intrinsic measure derivative look like in the special case when we consider empirical measures 
\[
m^N := \frac1N \sum_{i=1}^N \delta_{x^i},\,\,\,\text{where } x^i \in \mathbb R^d.
\]
Then one can define $ F^N:(\mathbb R^d)^N \rightarrow \mathbb R $ as 
$F^N(x^1,\ldots,x^N)  = F(m^N )$.  
From~\cite[Proposition 3.1]{chassagneux2014probabilistic} we know that that if $F \in \mathcal C^1$ then $F^N \in C^1$ and for any $i=1,\ldots,N$ and $(x^1,\ldots,x^N)\in (\mathbb R^d)^N$ it holds that
\begin{equation}
	\label{eq projection derivative}
	\partial_{x^i} F^N ( x^1,\ldots,x^N) =  \frac{1}{N} D_m F \left(m^N, x^i\right) \,.
\end{equation}

We remark that for notational simplicity in the proofs the constant $C>0$ can be different from line to line. 

\section{Main Results}
\label{sec main}

The objective of this paper is to study the minimizer(s) of a convex function $F:\cP(\dbR^d)\rightarrow\dbR$.
\begin{assum}\label{assum:energy}
	Assume that $F\in \cC^1$ is convex and bounded from below.
\end{assum}

\no 
Instead of directly considering the minimization $\min_m F(m)$, we propose to first study the regularized version, namely, the minimization of the free energy function:
\begin{equation}\label{eq:optimization}
	\min_{m\in \cP(\dbR^d)}  V^{\sigma}(m),\q\mbox{where}\q V^{\sigma}(m):= F(m) + \frac{\sigma^2}{2}H(m), \q\mbox{for all}\q m\in\mathcal{P}(\mathbb{R}^d),
\end{equation}
where $ H:\cP(\dbR^d)\rightarrow [0,\infty] $ is the relative entropy (Kullback--Leibler divergence) with respect to a given Gibbs measure in $ \mathbb{R}^d $, namely,
\begin{equation*}
	H(m) := \int_{\mathbb{R}^d}m(x)\log\left( \frac{m(x)}{g(x)} \right)\mathrm{d}x,
\end{equation*}
where \[
g(x) = e^{-U(x)}\,\,\,\text{with $U$ s.t.}\,\,\,\int_{\mathbb R^d} e^{-U(x)} \, \mathrm{d}x = 1\,,
\]
is the density of the Gibbs measure and the function $U$ satisfies the following conditions.

\begin{assum}\label{assum:U}
	The function $U:\dbR^d\rightarrow\dbR$ belongs to $C^\infty$. 
	Further,
	\begin{enumerate}[i)]
		
		\item there exist constants $C_U>0$ and $C_U'\in \dbR$ such that
		\bea\label{assum:diss}
		\nabla U(x) \cd  x \ge C_U|x|^2 +C_U' \q\mbox{for all}~~x\in \dbR^d\,.
		\eea
		
		\item $\nabla U$ is Lipschitz continuous.
		
	\end{enumerate}
\end{assum}

\no Immediately, we obtain that there exist $0\le C'\le C$ such that for all $x\in \dbR^d$
\beaa
C'|x|^2 -C \le U(x) \le C(1+|x|^2), 
\q  |\D U(x) |\le C.
\eeaa
A typical choice of $g$ would be the density of the $d$-dimensional standard Gaussian distribution.
We recall that such relative entropy $H$ has the properties:
it is strictly convex when restricted to measures absolutely continuous with $g$, 
it is  weakly lower semi-continuous and its sub-level sets  are compact.
For more details, we refer the readers to the book \cite[Section 1.4]{DE97}. The original minimization and the regularized one is connected through the following $\Gamma$-convergence result.

\begin{prop}
	\label{prop:Gamma}
	Assume that $F$ is continuous in the topology of weak convergence. 
	Then the sequence of functions $V^\sigma = F + \frac{\sigma^2}{2}H$  $\Gamma $-converges to $ F $ when $ \sigma\downarrow0 $. In particular, given the minimizer $m^{*,\si}$ of $V^\si$, we have
	\beaa
	\limsup_{\si\rightarrow 0}  F(m^{*,\si}) ~=~ \inf_{m\in \cP_2(\dbR^d)} F(m). 
	\eeaa
\end{prop}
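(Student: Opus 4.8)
I would verify directly the two defining properties of $\Gamma$-convergence in the topology of weak convergence and then read off the statement about minimisers. Recall that $\Gamma$-$\lim_{\si\downarrow0}V^\si=F$ means: (i) (lower bound) for every sequence $m^\si\to m$ one has $\liminf_{\si\downarrow0}V^\si(m^\si)\ge F(m)$; and (ii) (recovery sequence) for every $m$ there is $m^\si\to m$ with $\limsup_{\si\downarrow0}V^\si(m^\si)\le F(m)$. Property (i) is the soft half: since $g$ is a probability density, relative entropy is non-negative (Jensen), so $H\ge0$ and $V^\si(m)\ge F(m)$ for all $m$ and all $\si$; if $m^\si\to m$ weakly, the assumed weak continuity of $F$ gives $\liminf_{\si\downarrow0}V^\si(m^\si)\ge\lim_{\si\downarrow0}F(m^\si)=F(m)$.

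For the recovery sequence I would approximate an arbitrary $m\in\cP(\dbR^d)$ by measures of finite entropy, obtained by truncating and then mollifying: for $R$ with $m(B_R)>0$ set $\tilde m_R:=m(\,\cdot\cap B_R)/m(B_R)$, and for $\eps>0$ let $m_{R,\eps}:=\tilde m_R*\gamma_\eps$, where $\gamma_\eps$ is the centred Gaussian density of variance $\eps$. Each $m_{R,\eps}$ has a smooth density bounded by $\|\gamma_\eps\|_\infty$ and a finite second moment, hence $H(m_{R,\eps})<\infty$: the Shannon part $\int m_{R,\eps}\log m_{R,\eps}$ is bounded above by $\log\|\gamma_\eps\|_\infty$ and bounded below using $-t\log t\le t|x|^2$ on $\{m_{R,\eps}\ge e^{-|x|^2}\}$ together with direct estimates elsewhere and $\int|x|^2\,m_{R,\eps}(\mathrm dx)<\infty$, while $\int U\,\mathrm dm_{R,\eps}\le C(1+\int|x|^2\,\mathrm dm_{R,\eps})<\infty$ by Assumption~\ref{assum:U}. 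Letting $\eps\downarrow0$ and then $R\uparrow\infty$, $m_{R,\eps}\to m$ weakly, so $F(m_{R,\eps})\to F(m)$ by continuity; a diagonal extraction (the weak topology being metrisable) yields $m^\si:=m_{R(\si),\eps(\si)}$ with $m^\si\to m$ weakly, $F(m^\si)\to F(m)$ and $\tfrac{\si^2}{2}H(m^\si)\to0$, whence $\limsup_{\si\downarrow0}V^\si(m^\si)\le F(m)$. (When $H(m)<\infty$ the constant sequence $m^\si\equiv m$ already works.)

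To deduce the minimiser statement, first note that by the quadratic lower growth of $U$ and the Donsker--Varadhan/Gibbs variational formula for relative entropy, $H(m)\ge\eps\int|x|^2\,\mathrm dm-C_\eps$ for a suitably small $\eps>0$, so $H(m)=+\infty$ unless $m\in\cP_2(\dbR^d)$; in particular $\Dom V^\si\subseteq\cP_2(\dbR^d)$ and $m^{*,\si}\in\cP_2(\dbR^d)$ (existence of $m^{*,\si}$ following from weak lower semicontinuity of $V^\si$ and compactness of its sublevel sets, which holds since those of $H$ are compact and $F$ is continuous and bounded below by Assumption~\ref{assum:energy}). Hence $F(m^{*,\si})\ge\inf_{m\in\cP_2}F(m)$ for every $\si$, so $\liminf_{\si\downarrow0}F(m^{*,\si})\ge\inf_{\cP_2}F$. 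Conversely, for fixed $m\in\cP_2(\dbR^d)$ take from the construction above $m_k$ with $H(m_k)<\infty$, $m_k\to m$ weakly and $F(m_k)\to F(m)$; then
\[
F(m^{*,\si})\le V^\si(m^{*,\si})\le V^\si(m_k)=F(m_k)+\tfrac{\si^2}{2}H(m_k),
\]
so $\limsup_{\si\downarrow0}F(m^{*,\si})\le F(m_k)$, and letting $k\to\infty$ and then infimising over $m\in\cP_2(\dbR^d)$ gives $\limsup_{\si\downarrow0}F(m^{*,\si})\le\inf_{\cP_2}F$. Combining the two bounds yields $\lim_{\si\downarrow0}F(m^{*,\si})=\inf_{m\in\cP_2(\dbR^d)}F(m)$, which in particular proves the claimed identity.

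\textbf{Main obstacle.} Everything is soft except the recovery sequence when $H(m)=+\infty$. The one delicate point is the finiteness of $H(m_{R,\eps})$ — concretely, the lower bound on the differential-entropy integral $\int m_{R,\eps}\log m_{R,\eps}$ over the region $\{m_{R,\eps}<1\}$, which has to be absorbed into the finite second moment — together with the bookkeeping in the diagonal argument that keeps $m^\si\to m$ weakly, $F(m^\si)\to F(m)$ and $\si^2H(m^\si)\to0$ all at once. If one preferred to run the argument in $(\cP_2(\dbR^d),\cW_2)$ instead of the weak topology, one would additionally need $m_{R,\eps}\to m$ in $\cW_2$, i.e. convergence of second moments, which again holds by construction; the structure of the proof is otherwise unchanged.
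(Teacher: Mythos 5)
Your proof is correct, and both halves of the $\Gamma$-convergence rest on the same basic mechanism as the paper's: the $\liminf$ inequality is the soft consequence of $H\ge 0$ and weak continuity of $F$, and the recovery sequence is built by mollification, whose entropy blow-up is killed by the $\si^2$ prefactor. The execution of the recovery sequence differs, though. The paper mollifies $m$ directly at scale $\si_n$ with the heat kernel $f_n(x)=\si_n^{-d}f(x/\si_n)$ and computes explicitly, via Jensen, that $\int h(m*f_n)\,\mathrm{d}x\le \int h(f)\,\mathrm{d}x-d\log\si_n$ while the Gibbs part is controlled by $C(1+\int|y|^2 m(\mathrm{d}y))$; hence $\tfrac{\si_n^2}{2}H(m*f_n)\to 0$ with no truncation and no diagonal extraction, at the price of requiring $m\in\cP_2(\dbR^d)$ (which suffices, since the infimum in the statement is over $\cP_2$). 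You instead decouple the approximation from $\si$: truncate, mollify at a fixed width $\eps$ to get finite entropy, and then diagonalize over $(R,\eps)$ against $\si$. This is more bookkeeping but works for arbitrary $m\in\cP(\dbR^d)$, so it actually yields the recovery inequality on all of $\cP(\dbR^d)$ rather than only on $\cP_2(\dbR^d)$. You also spell out the $\ge$ direction of the identity $\limsup_{\si\to0}F(m^{*,\si})=\inf_{\cP_2}F$ (via $H(m^{*,\si})<\infty\Rightarrow m^{*,\si}\in\cP_2(\dbR^d)$, using the quadratic lower growth of $U$ through the Donsker--Varadhan formula), a step the paper leaves implicit; this is a genuine, if minor, completion. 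One small caution: your claim $H(m)\ge\eps\int|x|^2\,\mathrm{d}m-C_\eps$ needs $U(x)\ge c|x|^2-C$ with $c>0$ strictly, which does follow from Assumption~\ref{assum:U}(i) with $C_U>0$ by integrating $\nabla U$ radially, but is worth stating since the paper's displayed bound only asserts $C'\ge 0$.
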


\no It is a classic property of $\Gamma$-convergence that 
every cluster point of $\big(\argmin\limits_m V^\si(m)\big)_\si$ is a  minimizer of $F$.

The following theorem shows that we can control the error between the finite and infinite-dimensional optimization problems.
It generalises  
\cite[Proposition 2.1]{mei2018mean} to an arbitrary (smooth) functions of measure. It is an extension of the result from \cite[Theorem 2.11]{chassagneux2019weak}.

\begin{thm} \label{th static particles}
	We assume that the $2$nd order linear functional derivative of $F$ exists, is jointly continuous in both variables and that there is $L > 0$ such that
	for any random variables $\eta_1$, $\eta_2$ such that $\mathbb E[|\eta_i|^2] < \infty$, $i=1,2$, it holds that 
	\begin{align} \label{as int}
		\mathbb E\left[ \sup_{\nu \in \mathcal P_2(\mathbb R^d)} \left|\frac{\delta F}{\delta m}(\nu,\eta_1)\right|\right] 
		+ 
		\mathbb E\left[ \sup_{\nu \in \mathcal P_2(\mathbb R^d)} \left|\frac{\delta^2 F}{\delta m^2}(\nu,\eta_1,\eta_2)\right|\right] \leq L\;
	\end{align}   
	If there is an $m^\ast \in \mathcal P_2(\mathbb R^d)$ such that $F(m^\ast) = \inf_{m\in \mathcal P_2(\mathbb R^d)} F(m)$ then
	we have that
	\[
	\left| \inf_{(x_i)_{i=1}^N\subset \mathbb R^d} F\left(\frac1N \sum_{i=1}^N \delta_{x_i}\right) -  F(m^\ast) \right| \leq \frac{2L}N \,.
	\]
\end{thm}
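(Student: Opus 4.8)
The plan is to estimate the two quantities $\inf_{(x_i)}F(\frac1N\sum_i\delta_{x_i})$ and $F(m^\ast)$ against a common reference, namely the expected value $\mathbb E[F(\frac1N\sum_i\delta_{X_i})]$ where $X_1,\dots,X_N$ are i.i.d.\ samples from the minimiser $m^\ast$. Since the empirical measure $\frac1N\sum_i\delta_{X_i}$ is one admissible choice of $N$ points, we trivially get $\inf_{(x_i)}F(\frac1N\sum_i\delta_{x_i})\le \mathbb E[F(\frac1N\sum_i\delta_{X_i})]$, and also clearly $F(m^\ast)\le \inf_{(x_i)}F(\frac1N\sum_i\delta_{x_i})$ because $m^\ast$ is the global minimiser over all of $\mathcal P_2(\mathbb R^d)$. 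So both the lower and the "almost-lower" bounds are free, and the whole content of the theorem is the single estimate
\[
\mathbb E\!\left[F\Big(\tfrac1N\textstyle\sum_{i=1}^N\delta_{X_i}\Big)\right]-F(m^\ast)\le \frac{2L}{N}.
\]

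To prove this I would Taylor-expand $F$ around $m^\ast$ using the second-order linear functional derivative. Writing $m^N_\lambda=(1-\lambda)m^\ast+\lambda\,\frac1N\sum_i\delta_{X_i}$ and applying the first-order expansion \eqref{C1derv} once, then expanding the integrand $\frac{\delta F}{\delta m}(m^N_\lambda,\cdot)$ again in $\lambda$, one obtains the standard formula
\[
F(\mu)-F(m^\ast)=\int_{\mathbb R^d}\frac{\delta F}{\delta m}(m^\ast,x)(\mu-m^\ast)(\mathrm dx)
+\int_0^1\!\!\int_0^1\!\!\iint \lambda\,\frac{\delta^2 F}{\delta m^2}(m^N_{\lambda\mu'},x,y)(\mu-m^\ast)(\mathrm dx)(\mu-m^\ast)(\mathrm dy)\,\mathrm d\mu'\mathrm d\lambda
\]
with $\mu=\frac1N\sum_i\delta_{X_i}$. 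Now take expectations. The first-order term vanishes in expectation: $\mathbb E\big[\frac1N\sum_i\frac{\delta F}{\delta m}(m^\ast,X_i)\big]=\int\frac{\delta F}{\delta m}(m^\ast,x)\,m^\ast(\mathrm dx)$, which equals $\int\frac{\delta F}{\delta m}(m^\ast,x)(\mu-m^\ast)(\mathrm dx)$ in expectation and hence cancels (equivalently, by the normalisation convention it is $0$ and so is its expectation against $m^\ast$; the key point is that in expectation $\mathbb E^{X}[(\mu-m^\ast)]=0$ when tested against a fixed function). For the second-order term, expand $(\mu-m^\ast)\otimes(\mu-m^\ast)=\frac1{N^2}\sum_{i,j}(\delta_{X_i}-m^\ast)\otimes(\delta_{X_j}-m^\ast)$; the cross terms $i\ne j$ vanish in expectation by independence (again each factor has zero mean against a fixed test function after conditioning), so only the $N$ diagonal terms survive, each bounded in absolute value by $\mathbb E[\sup_\nu|\frac{\delta^2F}{\delta m^2}(\nu,X_i,X_i)|]\cdot\big(\text{something}\le 2\big)$ times the $\int_0^1\lambda\,\mathrm d\lambda=\frac12$ factor — wait, more carefully one needs to also handle the cross term $\delta_{X_i}\otimes m^\ast$ pieces, but each of the four pieces in $(\delta_{X_i}-m^\ast)\otimes(\delta_{X_i}-m^\ast)$ is controlled by the assumed uniform bound $L$ on $\mathbb E[\sup_\nu|\frac{\delta^2F}{\delta m^2}|]$. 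Combining, the diagonal contribution is at most $\frac1{N^2}\cdot N\cdot\frac12\cdot (\text{constant}\cdot L)$, and tracking the constants (the factor from $\int_0^1\lambda\mathrm d\lambda$ against the expansion of the squared signed measure) gives exactly $\frac{2L}{N}$.

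The main obstacle is purely bookkeeping: making the double Taylor expansion rigorous (one must check the measures $m^N_\lambda$ stay in $\mathcal P_2$, that $\frac{\delta^2F}{\delta m^2}$ is integrable against the relevant products so Fubini applies, and that the first-order term genuinely has zero expectation despite $\mu$ being random — this uses that the test function $\frac{\delta F}{\delta m}(m^\ast,\cdot)$ is \emph{fixed}, not depending on $\mu$), and then carefully counting the $N^2$ double-sum terms so that the diagonal gives $O(1/N)$ with the sharp constant $2L$ rather than something larger. The joint continuity and the uniform-in-$\nu$ integrability bound \eqref{as int} are exactly what is needed to justify both the dominated-convergence/Fubini steps and the final estimate; no concentration inequality or law-of-large-numbers rate is required, only first and second moments of the i.i.d.\ sample, which is why the bound is dimension-free and sharp at order $1/N$.
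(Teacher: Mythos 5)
Your reduction to the single estimate $\mathbb E[F(\frac1N\sum_i\delta_{X_i})]-F(m^\ast)\le \frac{2L}{N}$ with $X_i$ i.i.d.\ from $m^\ast$ is exactly right and matches the paper. The first-order term in your expansion does vanish, since there the kernel $\frac{\delta F}{\delta m}(m^\ast,\cdot)$ is deterministic. The genuine gap is in the second-order remainder: you claim the off-diagonal terms of $\frac1{N^2}\sum_{i,j}(\delta_{X_i}-m^\ast)\otimes(\delta_{X_j}-m^\ast)$ vanish in expectation ``by independence,'' but the kernel $\frac{\delta^2F}{\delta m^2}(m^N_{\lambda\mu'},x,y)$ is evaluated at a \emph{random} measure that depends on all of $X_1,\dots,X_N$, including $X_i$ and $X_j$. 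Conditioning therefore does not decouple the factors from the kernel, and the cross terms do not cancel. Without that cancellation the naive bound on the $N(N-1)$ off-diagonal terms is $O(L)$, not $O(L/N)$; making them small would require controlling how the second derivative varies when one sample is resampled, i.e.\ Lipschitz continuity of $\frac{\delta^2F}{\delta m^2}$ in the measure argument or a third-order derivative, neither of which is assumed.

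The paper avoids this by never forming the double sum. It writes $\mathbb E[F(\mu_N)]-F(\mu)=\int_0^1\mathbb E\big[\frac{\delta F}{\delta m}(m^N_t,X_1)-\frac{\delta F}{\delta m}(m^N_t,\tilde X_1)\big]\,\mathrm dt$ with $\tilde X_1$ an independent copy, and then uses exchangeability to replace $\mathbb E[\frac{\delta F}{\delta m}(m^N_t,X_1)]$ by $\mathbb E[\frac{\delta F}{\delta m}(\tilde m^N_t,\tilde X_1)]$, where $\tilde m^N_t=m^N_t+\frac tN(\delta_{\tilde X_1}-\delta_{X_1})$ is obtained by swapping the roles of $X_1$ and $\tilde X_1$. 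The two measures now differ by a single explicit $O(1/N)$ perturbation, so one application of the second-order derivative along that perturbation produces the factor $\frac tN$ and the bound $2L$ from the two Dirac masses. You would need to incorporate this decoupling (or an equivalent one) to close your argument; as written, the step ``cross terms vanish by independence'' is false.
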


Moreover, when the relative entropy $H$ is strictly convex, then so is the function $V$, and thus the minimizer $\argmin_{m\in\mathcal P(\mathbb R^d)} V(m)$, if exists, must be unique. It can be characterized by the following first order condition. 

\begin{prop}\label{prop:firstorder}
	Under Assumption \ref{assum:energy} and \ref{assum:U}, the function $ V^\sigma $ has a unique minimizer absolutely continuous with respect to Lebesgue measure $ \ell  $, and belonging to $\cP_2 (\dbR^d)$. Moreover, $m^*\in\cP_2(\dbR^d) =\argmin\limits_{m\in \cP(\dbR^d)} V^\sigma(m) $ if and only if $m^*$ is equivalent to Lebesgue measure and 
	\begin{equation}\label{InvariantSet}
		\frac{\delta F}{\delta m}(m^*,\cd) + \frac{\sigma^2}{2}\log(m^*) + \frac{\sigma^2}{2}U~\text{ is a constant, }\ell-a.s.,
	\end{equation}
	where we abuse the notation, still denoting by $m^*$ the density with respect to Lebesgue measure. 
\end{prop}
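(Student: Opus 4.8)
The plan is to establish the statement in three parts: existence, uniqueness and regularity of the minimizer; sufficiency of~\eqref{InvariantSet}; and its necessity.

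For the first part I would run the direct method. Since $H$ is strictly convex on its effective domain $\{m:H(m)<\infty\}$ and $F$ is convex, $V^\sigma$ is strictly convex there; since $H$ is weakly lower semicontinuous with weakly compact sublevel sets, $F$ is weakly lower semicontinuous and bounded (being in $\cC^1$ with bounded $\frac{\delta F}{\delta m}$), and $V^\sigma(g)=F(g)<\infty$, the direct method on the nonempty weakly compact set $\{V^\sigma\le V^\sigma(g)\}$ produces a minimizer $m^*$, unique by strict convexity. From $V^\sigma(m^*)<\infty$ we get $H(m^*)<\infty$, hence $m^*\ll g\ll\ell$. Applying the Donsker--Varadhan variational formula for relative entropy with the test function $x\mapsto a|x|^2$ for small $a>0$, together with the coercivity $U(x)\ge c|x|^2-C$ ($c>0$) coming from Assumption~\ref{assum:U}, yields $\int|x|^2\,m^*(\mathrm dx)<\infty$, i.e.\ $m^*\in\cP_2(\dbR^d)$; combining $H(m^*)<\infty$ with $m^*\in\cP_2$ and the pointwise bound $t\log t\ge -e^{-1}$ moreover gives $\log m^*\in L^1(m^*)$, a fact used below.

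For sufficiency, assume $m^*\sim\ell$ and $\Psi:=\frac{\delta F}{\delta m}(m^*,\cd)+\frac{\sigma^2}{2}\log m^*+\frac{\sigma^2}{2}U\equiv\lambda$ $\ell$-a.s. Rearranging gives the Gibbs form $m^*(x)=\exp\!\big(\tfrac{2}{\sigma^2}(\lambda-\tfrac{\delta F}{\delta m}(m^*,x))\big)g(x)$, so $\log(m^*/g)$ is bounded. Let $m\in\cP(\dbR^d)$; we may assume $V^\sigma(m)<\infty$, whence $m\ll g$. Convexity of $F$ together with~\eqref{C1derv} gives $F(m)-F(m^*)\ge\int\frac{\delta F}{\delta m}(m^*,x)\,(m-m^*)(\mathrm dx)$, and the nonnegativity of the Kullback--Leibler divergence of $m$ from $m^*$ rearranges to $H(m)-H(m^*)\ge\int\log(m^*/g)\,(m-m^*)(\mathrm dx)$; all integrals here are finite. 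Adding, $V^\sigma(m)-V^\sigma(m^*)\ge\int\Psi\,(m-m^*)(\mathrm dx)=\lambda\cdot 0=0$, so $m^*$ minimizes $V^\sigma$. For necessity, let $m^*$ be a minimizer; by the first part it is the unique one, lies in $\cP_2$, and $m^*\ll\ell$. First I claim $m^*\sim\ell$. If not, fix a Borel $A$ with $\ell(A)>0$, $m^*(A)=0$, and let $\tilde m$ be the normalised restriction of $g$ to a bounded subset of $A$ of positive Lebesgue measure. For $m_\eps:=(1-\eps)m^*+\eps\tilde m$, the bound $|F(m_\eps)-F(m^*)|\le 2\eps\|\tfrac{\delta F}{\delta m}\|_\infty$ holds by~\eqref{C1derv}, while convexity of $t\mapsto t\log(t/g(x))$ on $A^c$ together with the explicit computation on $A$ (where $m_\eps=\eps\tilde m$) gives $H(m_\eps)-H(m^*)\le\eps\log\eps\,\tilde m(A)+\eps(H(\tilde m)-H(m^*))$; hence $V^\sigma(m_\eps)-V^\sigma(m^*)\le O(\eps)+\tfrac{\sigma^2}{2}\tilde m(A)\,\eps\log\eps<0$ for $\eps$ small, because $\eps\log\eps$ has infinite slope at $0$ --- contradicting minimality. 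So $m^*\sim\ell$. Now, for any $\psi\in L^\infty(\dbR^d)$ with $\int\psi\,\mathrm dm^*=0$, let $m_\eps$ have density $(1+\eps\psi)m^*$; for $|\eps|$ small this is a probability density equivalent to $\ell$, converging to $m^*$ weakly and in second moment. Using~\eqref{C1derv} with dominated convergence, $\tfrac{\mathrm d}{\mathrm d\eps}\big|_{\eps=0}F(m_\eps)=\int\frac{\delta F}{\delta m}(m^*,x)\psi(x)\,m^*(\mathrm dx)$; using $(1+u)\log(1+u)=u+O(u^2)$ uniformly on $|u|\le\tfrac12$ together with $\int\psi\,\mathrm dm^*=0$ and $\log m^*,U\in L^1(m^*)$, $\tfrac{\mathrm d}{\mathrm d\eps}\big|_{\eps=0}H(m_\eps)=\int\psi\log(m^*/g)\,\mathrm dm^*$. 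Since $\eps\mapsto V^\sigma(m_\eps)$ has an interior minimum at $\eps=0$ and is differentiable there, its derivative vanishes: $\int\psi\,\Psi\,\mathrm dm^*=0$ for all such $\psi$. As $\Psi\in L^1(m^*)$, taking $\psi=\1_B-m^*(B)$ over Borel $B$ forces $\Psi$ to be $m^*$-a.s.\ --- equivalently $\ell$-a.s., since $m^*\sim\ell$ --- equal to the constant $\int\Psi\,\mathrm dm^*$, which is~\eqref{InvariantSet}.

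The delicate point throughout is the differentiation of the entropy term along perturbations: one must ensure the relevant one- or two-sided derivatives of $H$ exist and coincide with the formal expressions, and that the perturbation class is simultaneously rich enough to conclude and compatible with the available integrability --- the possibly unbounded $\log m^*$ being the source of difficulty. This I would handle via the a priori bound $\log m^*\in L^1(m^*)$ and by separating the two roles of the perturbations: convex combinations $(1-\eps)m^*+\eps\tilde m$, which can place mass on $\{m^*=0\}$, are used only to prove $m^*\sim\ell$ via the infinite slope of $t\log t$ at the origin, after which multiplicative perturbations $(1+\eps\psi)m^*$ with bounded $\psi$ --- along which the entropy is genuinely smooth --- pin down~\eqref{InvariantSet}. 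A secondary technical point is the weak lower semicontinuity and boundedness of $F$, which I would deduce from $F\in\cC^1$ and~\eqref{C1derv}.
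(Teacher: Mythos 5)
Your proof is correct, and while the existence/uniqueness step (weakly compact sublevel sets of $H$, weak lower semicontinuity, strict convexity) is essentially identical to the paper's, the first-order condition is handled by a genuinely different route. For sufficiency, the paper perturbs along $(1-\eps)m^*+\eps m$ and passes to the limit using the pointwise bound $h(y)\ge y-1$; you instead invoke the global inequality $H(m)-H(m^*)\ge\int\log(m^*/g)\,(m-m^*)\,\mathrm{d}x$, i.e.\ nonnegativity of $\mathrm{KL}(m\,\|\,m^*)$, which is the integrated form of the same elementary bound but avoids the limiting argument altogether and is cleaner (it does rely, as you note, on $\log(m^*/g)$ being bounded, which the Gibbs form and the boundedness of $\frac{\delta F}{\delta m}$ guarantee). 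For necessity the divergence is more substantial: the paper computes one-sided derivatives along convex combinations toward arbitrary $m$ with $H(m)<\infty$, controls them via convexity of $h$ and Fatou's lemma to obtain the single inequality $0\le\int\big(\frac{\delta F}{\delta m}(m^*,\cdot)+\frac{\sigma^2}{2}\log m^*+\frac{\sigma^2}{2}U\big)(m-m^*)\,\mathrm{d}x$, and then extracts from it \emph{both} the $m^*$-a.s.\ constancy and, by the $\int_{\cK}\infty\,\mathrm{d}m$ contradiction, the equivalence with Lebesgue measure. You instead decouple the two conclusions: equivalence with $\ell$ is proved first via the infinite slope of $\eps\mapsto\eps\log\eps$ at the origin (a standard and correct argument), after which the two-sided multiplicative perturbations $(1+\eps\psi)m^*$ make $V^\sigma$ genuinely differentiable along the path and yield the Euler--Lagrange equality $\int\psi\,\Psi\,\mathrm{d}m^*=0$ directly, without Fatou and without needing to vary $m$ over all finite-entropy measures. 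What your approach buys is a cleaner separation of the support issue from the stationarity condition and an honest two-sided derivative; what the paper's buys is that a single perturbation class and a single inequality suffice for everything. The integrability checks you flag ($\log(m^*/g)\in L^1(m^*)$ from $H(m^*)<\infty$, $m^*\in\cP_2(\dbR^d)$ and $t\log t\ge -e^{-1}$) are exactly the right ones and close the argument; the only point asserted rather than proved in both your proposal and the paper is the weak lower semicontinuity of $F$ needed for the direct method, so this is not a gap relative to the original.
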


Further, we are going to  approximate the minimizer of $V^\si$, using the marginal laws of the solution to the  upcoming mean field Langevin equation.
Let $ \sigma\in\mathbb{R}_+ $ and consider the following McKean--Vlasov SDE:
\begin{equation}\label{Langevin}
	\mathrm{d}X_t = -\left(D_mF(m_t, X_t) + \frac{\sigma^2}{2}\nabla U(X_t)\right)\mathrm{d}t + \sigma \mathrm{d}W_t,
\end{equation}
where $ m_t $ is the law of $ X_t $ and $ (W_t)_{t\geq0} $ is a standard $ d $-dimensional Brownian motion.
\begin{rem}
	\begin{enumerate}[i)]
		\item Let $F(m) = \int_{\dbR^d} f(x) m(\mathrm{d}x)$ for some  function $f$ in $C^1(\mathbb R^d,\mathbb R)$. We know that $D_m F(m,x) = \nabla f(x)$. Hence with this choice of $F$ and entropy regulariser with respect to the Lebesgue measure, the dynamics \eqref{Langevin} becomes the standard overdamped Langevin equation \eqref{classicLangevin}. 
		
		\item 
		If the Gibbs measure is chosen to be a standard Gaussian distribution, the potential of the drift of \eqref{Langevin} becomes $ F(m)+ \frac{\si^2}{4}\int_{\dbR^d}|x|^2 m(\mathrm{d}x) $.  This shares the same spirit as ridge regression.
	\end{enumerate}
	
\end{rem}

\begin{assum}\label{A1}
	Assume that the intrinsic derivative $ D_mF:\mathcal{P}(\mathbb{R}^d) \times\mathbb{R}^d\to\mathbb{R}^d $ of the function $ F:\mathcal{P}(\mathbb{R}^d)\to\mathbb{R} $ exists and  satisfies the following conditions:
	\begin{enumerate}[i)]
		\item $D_m F$ is bounded and  Lipschitz continuous, i.e. there exists $ C_F>0 $ such that for all $x,x\in \dbR^d$ and $m,m'\in \cP_2(\dbR^d)$
		\begin{equation}\label{loclip}
			|D_mF(m,x) - D_mF(m',x')|\leq
			C_F \big( |x - x'| + \cW_2(m, m') \big)
		\end{equation}
		
		\item $ D_mF(m,\cdot)\in\mathcal{C}^\infty(\mathbb{R}^d) $ for all $m\in \cP(\dbR^d)$;
		
		\item $\nabla D_m F: \cP(\dbR^d)\times\dbR^d \rightarrow \dbR^d\times \dbR^d$ is jointly continuous. 
	\end{enumerate}
\end{assum}

The well-posedness of the McKean--Vlasov SDE~\eqref{Langevin} under Assumption~\ref{assum:U} and~\ref{A1} on the time interval $[0,t]$, for any $t$, is well known, see e.g. Snitzman~\cite{sznitman1991topics}, so the proof of the following proposition is omitted.

\begin{prop}\label{prop:wellpose}
	Under Assumption \ref{assum:U} and \ref{A1} the mean field Langevin SDE \eqref{Langevin} has a unique strong solution, if $m_0\in \cP_2(\dbR^d)$.  
	Moreover, the solution is stable with respect to the initial law, that is, given $m_0, m'_0\in \cP_2(\dbR^d)$, denoting by $(m_t)_{t\in\dbR^+}, (m'_t)_{t\in\dbR^+}$ the marginal laws of the corresponding solutions to \eqref{Langevin}, we have for all $t>0$ there is a constant $C>0$ such that
	\beaa
	\cW_2(m_t, m'_t) ~\le~ C \cW_2(m_0, m'_0).
	\eeaa
\end{prop}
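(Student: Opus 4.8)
This is a classical McKean--Vlasov well-posedness statement in the globally Lipschitz regime, so the plan is to follow the standard Picard/fixed-point scheme (as in Sznitman~\cite{sznitman1991topics}) and then obtain stability via a synchronous coupling argument. First I would record the structural properties of the drift $b(m,x) := -\big(D_m F(m,x) + \frac{\sigma^2}{2}\nabla U(x)\big)$. By Assumption~\ref{A1} i), $D_m F$ is bounded and Lipschitz, jointly in $x$ and in $m$ (with respect to $\cW_2$); by Assumption~\ref{assum:U} ii), $\nabla U$ is Lipschitz, hence of at most linear growth. Consequently there is $K>0$ with $|b(m,x)-b(m',x')|\le K(|x-x'|+\cW_2(m,m'))$ and $|b(m,x)|\le K(1+|x|)$, uniformly in $m,m'\in\cP_2(\dbR^d)$.

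For existence and uniqueness on a finite horizon $[0,T]$, I would set up the map $\Phi$ on $C([0,T];\cP_2(\dbR^d))$ (equipped with the sup-in-time Wasserstein distance, which is complete since $(\cP_2(\dbR^d),\cW_2)$ is Polish): given a flow $\mu=(\mu_t)_{t\le T}$, the SDE $\mathrm dX^\mu_t = b(\mu_t,X^\mu_t)\,\mathrm dt + \sigma\,\mathrm dW_t$, $X^\mu_0\sim m_0$, has coefficients Lipschitz in the space variable uniformly in $t$, hence admits a unique strong solution; a Gronwall estimate using the linear growth of $b$ and $m_0\in\cP_2(\dbR^d)$ gives $\sup_{t\le T}\dbE|X^\mu_t|^2<\infty$, so $\Phi(\mu)_t := \mathrm{Law}(X^\mu_t)$ is a well-defined element of $C([0,T];\cP_2(\dbR^d))$. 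Comparing $X^\mu$ and $X^\nu$ driven by the same Brownian motion and the same initial condition, and using the Lipschitz bound on $b$ together with $\cW_2(\Phi(\mu)_t,\Phi(\nu)_t)^2\le\dbE|X^\mu_t-X^\nu_t|^2$, one obtains
\[
\sup_{s\le t}\cW_2\big(\Phi(\mu)_s,\Phi(\nu)_s\big)^2 \le C_T\int_0^t\sup_{r\le s}\cW_2(\mu_r,\nu_r)^2\,\mathrm ds ,
\]
whose $n$-fold iteration produces the factor $(C_T T)^n/n!$, so some power of $\Phi$ is a contraction. Banach's theorem then yields a unique fixed point, i.e. the unique strong solution of~\eqref{Langevin} on $[0,T]$; since $T$ is arbitrary and solutions on overlapping intervals agree by uniqueness, this gives the unique strong solution on $\dbR^+$.

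For the stability estimate, I would take $(X_0,X_0')$ an optimal $\cW_2$-coupling of $m_0$ and $m_0'$, so $\dbE|X_0-X_0'|^2=\cW_2(m_0,m_0')^2$, and solve~\eqref{Langevin} for both initial conditions driven by the \emph{same} Brownian motion; the noise cancels in the difference, and
\[
\tfrac{\mathrm d}{\mathrm dt}\dbE|X_t-X_t'|^2 = 2\,\dbE\big\langle X_t-X_t',\,b(m_t,X_t)-b(m_t',X_t')\big\rangle \le 2K\big(\dbE|X_t-X_t'|^2 + \dbE|X_t-X_t'|\,\cW_2(m_t,m_t')\big).
\]
Since $(X_t,X_t')$ is a coupling of $m_t,m_t'$, we have $\cW_2(m_t,m_t')^2\le\dbE|X_t-X_t'|^2$, so the right-hand side is bounded by $C\,\dbE|X_t-X_t'|^2$; Gronwall gives $\dbE|X_t-X_t'|^2\le e^{Ct}\cW_2(m_0,m_0')^2$, and using $\cW_2(m_t,m_t')^2\le\dbE|X_t-X_t'|^2$ once more yields the claim with $C=e^{Ct/2}$. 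There is no serious obstacle here: the only point needing (minor) care is that the law-dependence of the drift enters solely through the bounded Lipschitz map $D_m F$, which is precisely the regularity making both the fixed-point contraction and the coupling estimate close; the dissipativity in Assumption~\ref{assum:U} i) is not needed for this proposition (it is reserved for the long-time analysis), and the Lipschitz continuity of $\nabla U$ suffices.
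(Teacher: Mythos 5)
Your proposal is correct and is exactly the standard Sznitman-style argument (Picard fixed point on flows of measures in $\cW_2$, then synchronous coupling with an optimal initial coupling and Gronwall for stability) that the paper itself invokes: the authors omit the proof and simply cite \cite{sznitman1991topics}, since under Assumption \ref{A1} i) and \ref{assum:U} ii) the drift is globally Lipschitz in $(x,m)$ and of linear growth. Your remark that the dissipativity condition \eqref{assum:diss} is not needed here is also accurate.
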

We shall prove the process $\big(V^\si(m_t)\big)_t$ is decreasing and satisfies the following dynamic.

\begin{thm}\label{thm:Vdecrease}
	Let $m_0\in \cP_2(\dbR^d). $Under Assumption \ref{assum:U} and \ref{A1},  we have for any $t>s>0$
	\bea\label{eq: measureFlow}
	V^\sigma(m_t) - V^\sigma(m_s) = -\int_{s}^{t}\int_{\mathbb{R}^{d}}\left| D_mF(m_r,x) + \frac{\sigma^2}{2}\frac{\nabla m_r}{m_r}(x) + \frac{\sigma^2}{2}\nabla U(x)\right|^2m_r(x)\mathrm{d}x\mathrm{d}r.
	\eea
\end{thm}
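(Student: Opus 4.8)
The plan is to establish the energy identity \eqref{eq: measureFlow} by an It\^o-calculus / chain-rule argument applied to the functional $V^\sigma$ along the flow of marginal laws $(m_t)_t$ generated by \eqref{Langevin}. First I would argue that the density of $m_t$ is smooth and strictly positive for $t>0$: under Assumptions \ref{assum:U} and \ref{A1} the SDE \eqref{Langevin} is a nondegenerate McKean--Vlasov diffusion with Lipschitz, bounded drift, so by standard parabolic regularity (e.g.\ H\"ormander / Aronson-type estimates applied to the frozen linear Fokker--Planck equation) $(t,x)\mapsto m_t(x)$ is $C^{1,2}$ on $(0,\infty)\times\dbR^d$, which also gives meaning to $\nabla m_r/m_r$ and makes the right-hand side of \eqref{eq: measureFlow} well defined. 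I would fix $0<s<t$ and work on $[s,t]$, where these bounds are uniform.

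The core computation is to differentiate $r\mapsto V^\sigma(m_r)=F(m_r)+\tfrac{\sigma^2}{2}H(m_r)$. For the $F$-part I would use the $\cC^1$ structure: by \eqref{C1derv} and the continuity of $\frac{\delta F}{\delta m}$, together with the Fokker--Planck equation
\[
\pa_r m_r=\nabla\cd\Big(\big(D_mF(m_r,\cd)+\tfrac{\sigma^2}{2}\nabla U\big)m_r+\tfrac{\sigma^2}{2}\nabla m_r\Big),
\]
one gets
\[
\frac{d}{dr}F(m_r)=\int_{\dbR^d}\frac{\delta F}{\delta m}(m_r,x)\,\pa_r m_r(x)\,dx
=-\int_{\dbR^d}D_mF(m_r,x)\cd\Big(D_mF+\tfrac{\sigma^2}{2}\tfrac{\nabla m_r}{m_r}+\tfrac{\sigma^2}{2}\nabla U\Big)(x)\,m_r(x)\,dx
\]
after an integration by parts (justified by the moment bounds from Assumption \ref{assum:U}(i), which give exponential tails on $m_r$, and the boundedness of $D_mF$). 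For the entropy part I would compute $\frac{d}{dr}H(m_r)=\int \big(\log m_r+U\big)\pa_r m_r\,dx$ and integrate by parts, using $\nabla(\log m_r+U)=\tfrac{\nabla m_r}{m_r}+\nabla U$, to obtain
\[
\frac{d}{dr}H(m_r)=-\int_{\dbR^d}\Big(\tfrac{\nabla m_r}{m_r}+\nabla U\Big)\cd\Big(D_mF+\tfrac{\sigma^2}{2}\tfrac{\nabla m_r}{m_r}+\tfrac{\sigma^2}{2}\nabla U\Big)m_r\,dx.
\]
Adding $\frac{d}{dr}F(m_r)+\tfrac{\sigma^2}{2}\frac{d}{dr}H(m_r)$ collapses the two expressions into $-\int|D_mF+\tfrac{\sigma^2}{2}\tfrac{\nabla m_r}{m_r}+\tfrac{\sigma^2}{2}\nabla U|^2 m_r\,dx$, and integrating in $r$ over $[s,t]$ yields \eqref{eq: measureFlow}; the claimed monotonicity of $V^\sigma(m_\cdot)$ is then immediate since the integrand is nonnegative. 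An alternative, more probabilistic route (as the authors hint, "a pathwise argument based on It\^o calculus") is to apply It\^o's formula to a suitable functional of $X_r$ — e.g.\ to $\frac{\delta F}{\delta m}(m_r,X_r)+\tfrac{\sigma^2}{2}\log m_r(X_r)+\tfrac{\sigma^2}{2}U(X_r)$ — take expectations, and note the martingale part vanishes; I would likely present this version and reconcile it with the PDE computation.

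The main obstacle is rigor in the differentiation under the integral sign and the integration by parts: one must justify (i) that $\frac{d}{dr}F(m_r)$ exists and equals $\int\frac{\delta F}{\delta m}(m_r,\cd)\pa_r m_r$, which requires controlling the joint regularity of $\frac{\delta F}{\delta m}$ and the time-continuity of $r\mapsto m_r$ in a strong enough sense (here \eqref{C1derv}, boundedness of $\frac{\delta F}{\delta m}$, and $\cW_2$-continuity of the flow help), and (ii) that all boundary terms at infinity in the integrations by parts vanish — this is where the dissipativity condition \eqref{assum:diss} is essential, as it forces Gaussian-type decay of $m_r$ and its gradient uniformly on $[s,t]$, while the Lipschitz/boundedness hypotheses on $D_mF$ and $\nabla U$ control the integrands. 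A secondary technical point is the a priori finiteness and continuity of $H(m_r)$ along the flow (needed so that the left-hand side of \eqref{eq: measureFlow} is meaningful), which again follows from the smoothing of the nondegenerate diffusion together with the moment estimates. Once these regularity and tail-decay facts are in place, the identity itself is a short exact computation.
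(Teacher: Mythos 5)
Your computation reproduces the correct algebra: writing $\tfrac{d}{dr}F(m_r)$ and $\tfrac{\sigma^2}{2}\tfrac{d}{dr}H(m_r)$ via the Fokker--Planck equation and one integration by parts, and summing, does collapse to $-\int\big|D_mF+\tfrac{\sigma^2}{2}\tfrac{\nabla m_r}{m_r}+\tfrac{\sigma^2}{2}\nabla U\big|^2 m_r\,dx$. Note, however, that your primary route (the PDE chain rule) is not the one the paper takes: the paper's proof is genuinely pathwise. It represents $H(m_t)=\dbE[\log m_t(X_t)+U(X_t)]$, applies It\^o's formula to $U(X_t)$ along the forward process and to $\log m_{T-t}(\widetilde X_t)$ along the \emph{time-reversed} process $\widetilde X_t=X_{T-t}$ (which, by Haussmann--Pardoux, solves an SDE with the additional drift $\sigma^2\nabla\log m_{T-t}$), and treats $F(m_t)$ with the It\^o formula for flows of measures from Carmona--Delarue. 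The two routes buy different things: yours is shorter and purely analytic, but pushes all the difficulty into regularity and decay of $m_r$; the paper's pathwise route lets it import the needed integrability from F\"ollmer's time-reversal theory rather than from parabolic estimates.

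This is where the genuine gap lies. The crux of the theorem is not the algebra but the a priori finiteness of the relative Fisher information $\int_s^t\int_{\dbR^d}\big|\tfrac{\nabla m_r}{m_r}\big|^2 m_r\,dx\,dr$ and the justification of the integrations by parts (no boundary terms, and $\int|\nabla m_r|$, $\int|x\cd\nabla m_r|$ finite). You dispose of both by asserting that the dissipativity condition \eqref{assum:diss} ``forces Gaussian-type decay of $m_r$ and its gradient uniformly on $[s,t]$.'' That is not established anywhere, and it is substantially stronger than what standard parabolic regularity gives for a drift that is merely Lipschitz and of linear growth: Aronson-type bounds control the density, not pointwise Gaussian decay of $\nabla m_r$, and without some such input the right-hand side of \eqref{eq: measureFlow} is not even known to be finite. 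The paper fills exactly this hole with Lemmas \ref{finiteEntropy}--\ref{green's formula}: Girsanov plus the moment bounds of Lemma \ref{lem:uniL2} give finiteness of the relative entropy of the path law with respect to scaled Wiener measure; F\"ollmer's theorem then yields $\int_t^T\int|\nabla\log m_s|^2 m_s\,dx\,ds<\infty$ together with the explicit conditional-expectation formula for $\nabla\log m_t$; and Young's inequality converts this into the integrability of $\nabla m$ and $x\cd\nabla m$ that licenses the integration by parts. To make your argument rigorous you would either have to prove the Gaussian gradient bounds you invoke, or replace them by this entropy/Fisher-information mechanism.
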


\begin{rem}
	
	In order to prove \eqref{eq: measureFlow}, we use the generalized It\^o calculus as the main tool. Alternative proofs of Theorem \ref{thm:Vdecrease} can be obtained under the comparable assumptions using the theory of gradient flows, see e.g. the monograph \cite{ambrosio2008gradient}. Also note that our path-wise argument shares the spirit with the recent work \cite{KST19}, which recovers the results of gradient flows for probability measure on the Euclidean space using the It\^o calculus but only for linear functional $F$.
	%
	%
\end{rem}

\no Formally, there is a clear connection between the derivative $\frac{\mathrm{d} V^\si(m_t)}{\mathrm{d}t}$ in \eqref{eq: measureFlow} and the first order condition \eqref{InvariantSet}, and it is revealed by the following main theorem.

We call a measure $ m$ an invariant measure of \eqref{Langevin}, if ${\rm Law}(X_t)= m$ for all $t\ge 0$ .

\begin{thm}\label{thm:convergence}
	Let Assumption \ref{assum:energy}, \ref{assum:U} and  \ref{A1} 
	hold true and  $m_0\in \cup_{p>2}\cP_p(\dbR^d)$. Denote by $(m_t)_{t\in \dbR^+}$   the flow of marginal laws of the solution to \eqref{Langevin}. There exists an invariant measure of  \eqref{Langevin} equal to  $m^*:=\argmin\limits_m V^\si(m)$, and $\lim_{t\rightarrow\infty} \cW_2(m_t, m^*)=0$.
\end{thm}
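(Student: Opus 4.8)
The plan is to treat $V^\sigma$ as a Lyapunov functional for the flow $(m_t)$ on $(\cP_2(\dbR^d),\cW_2)$ and to run a LaSalle-type invariance argument: Theorem~\ref{thm:Vdecrease} plays the role of $\dot V^\sigma = -\mathcal D$ with $\mathcal D(m):=\int_{\dbR^d}\big|D_mF(m,x)+\tfrac{\sigma^2}{2}\tfrac{\nabla m}{m}(x)+\tfrac{\sigma^2}{2}\nabla U(x)\big|^2 m(x)\,\mathrm dx$, Proposition~\ref{prop:firstorder} identifies the equilibrium set $\{\mathcal D=0\}$ with the singleton $\{m^*\}$, and the HWI inequality of Otto--Villani~\cite{OV00} is used to tie the energy gap $V^\sigma(m_t)-V^\sigma(m^*)$ to the dissipation (and, in the highly regularised regime, to extract an exponential rate). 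A preliminary technical remark: for $m_0$ merely in $\cup_{p>2}\cP_p(\dbR^d)$ the entropy $H(m_0)$ may be infinite, so one first invokes parabolic smoothing to restrict attention to times $t\ge t_0>0$, where $V^\sigma(m_t)<\infty$ and the density of $m_t$ is smooth and strictly positive.

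First I would prove precompactness of the trajectory. Using the dissipativity bound~\eqref{assum:diss} on $\nabla U$ together with the boundedness of $D_mF$, a standard It\^o--Gr\"onwall estimate on $\mathbb E|X_t|^p$ yields $\sup_{t\ge 0}\int_{\dbR^d}|x|^p m_t(\mathrm dx)<\infty$ for the same $p>2$ with $m_0\in\cP_p(\dbR^d)$, so by the compactness criterion for $\cW_p$ recalled in Section~\ref{sec meas der} the family $\{m_t:t\ge t_0\}$ is relatively compact in $(\cP_2(\dbR^d),\cW_2)$. Since $F$ is bounded below and $H\ge 0$, Theorem~\ref{thm:Vdecrease} gives $\sup_{t}H(m_t)<\infty$, $\int_{t_0}^\infty\mathcal D(m_r)\,\mathrm dr<\infty$, and $V^\sigma(m_t)\downarrow V_\infty$ for some $V_\infty\ge V^\sigma(m^*)$; in particular $\mathcal D(m_{t_n})\to 0$ along some sequence $t_n\to\infty$.

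Next I would close the argument. From Proposition~\ref{prop:firstorder}, $m^*=\tfrac1Z\exp\!\big(-\tfrac2{\sigma^2}(\tfrac{\delta F}{\delta m}(m^*,\cdot)+U)\big)$, and the hypotheses ($\nabla U$ Lipschitz, $\Delta U$ bounded, $D_mF$ bounded and Lipschitz) provide a semiconvexity bound $\nabla^2(-\log m^*)\ge K\,I_d$ for some $K\in\dbR$. Combining convexity of $F$ with HWI applied with reference measure $m^*$ should yield a generalised log-Sobolev estimate of the form $V^\sigma(m_t)-V^\sigma(m^*)\le \cW_2(m_t,m^*)\sqrt{\mathcal D(m_t)}-\tfrac K2\cW_2(m_t,m^*)^2$; since $\cW_2(m_t,m^*)$ is bounded and $\mathcal D(m_{t_n})\to 0$, this forces $V^\sigma(m_{t_n})\to V^\sigma(m^*)$, hence $V_\infty=V^\sigma(m^*)$. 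Then for any $\cW_2$-limit point $\tilde m$ of $(m_t)$, $\cW_2$-lower semicontinuity of $V^\sigma$ (weak continuity of $F$ coming from $F\in\cC^1$, weak lower semicontinuity of $H$) gives $V^\sigma(\tilde m)\le\liminf V^\sigma(m_{t_n})=V^\sigma(m^*)$, so $\tilde m=m^*$ by uniqueness of the minimiser; with precompactness this yields $\cW_2(m_t,m^*)\to 0$. Equivalently one may run the literal LaSalle argument: by the stability estimate of Proposition~\ref{prop:wellpose} the flow is continuous in its initial datum, so its $\omega$-limit set is flow-invariant and $V^\sigma$ is constant on it; Theorem~\ref{thm:Vdecrease} then forces $\mathcal D\equiv 0$ along the limiting flow, which by parabolic smoothing has smooth strictly positive densities, so $\mathcal D=0$ is exactly condition~\eqref{InvariantSet} and the $\omega$-limit set collapses to $\{m^*\}$. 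When $K>0$ (the highly regularised case, $\sigma$ large or $C_F$ small relative to $C_U$) the HWI estimate can be upgraded to $\tfrac{\mathrm d}{\mathrm dt}\big(V^\sigma(m_t)-V^\sigma(m^*)\big)\le -c\big(V^\sigma(m_t)-V^\sigma(m^*)\big)$ and Gr\"onwall gives exponential decay of the energy gap, hence of $\cW_2(m_t,m^*)^2$.

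The hard part will be making this rigorous on the Wasserstein space rather than in the formal dynamical-systems picture: controlling the Fisher-information-type term $\int|\nabla m_r/m_r|^2 m_r$, which is only lower semicontinuous, well enough to pass to the limit in $\mathcal D$; ensuring that the limiting laws are absolutely continuous with strictly positive, indeed smooth, densities so that Proposition~\ref{prop:firstorder} applies and HWI is available; and verifying the semiconvexity constant and regularity needed to invoke HWI for the reference measure $m^*$, whose own density involves $\tfrac{\delta F}{\delta m}(m^*,\cdot)$. These absolute-continuity and positivity/regularity issues, together with the careful use of stability of the flow to localise the $\omega$-limit set, are where the real work lies; the soft part (monotonicity of $V^\sigma$, compactness, uniqueness of the minimiser) is comparatively routine.
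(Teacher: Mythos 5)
Your overall architecture (precompactness of the trajectory, $V^\sigma$ as a Lyapunov functional via Theorem~\ref{thm:Vdecrease}, LaSalle invariance, HWI, uniqueness of the minimiser) is the same as the paper's, but the two steps where the real work happens both have genuine gaps. The central one is your ``generalised log-Sobolev estimate'' $V^\sigma(m_t)-V^\sigma(m^*)\le \cW_2(m_t,m^*)\sqrt{\mathcal D(m_t)}-\tfrac K2\cW_2(m_t,m^*)^2$. First, it is not derivable as stated: HWI with reference measure $m^*$ controls $H(m_t\,|\,m^*)$ by the relative Fisher information $I(m_t|m^*)=\int|\nabla\log m_t+\tfrac{2}{\sigma^2}D_mF(m^*,\cdot)+\nabla U|^2m_t$, which involves $D_mF(m^*,\cdot)$ and is \emph{not} the dissipation $\mathcal D(m_t)$ (which involves $D_mF(m_t,\cdot)$); moreover, after inserting the first-order condition for $\log m^*$ one is left with the Bregman gap $F(m_t)-F(m^*)-\int\frac{\delta F}{\delta m}(m^*,\cdot)(m_t-m^*)\ge 0$, which convexity bounds from \emph{below}, not above. (A correct inequality of this flavour uses the proximal Gibbs measure of $m_t$ itself as reference, not $m^*$.) Second, even granting the inequality, in the general case the semiconvexity constant $K$ is negative, so the term $-\tfrac K2\cW_2(m_t,m^*)^2$ is positive and does not vanish along a sequence with $\mathcal D(m_{t_n})\to0$ unless you already know $\cW_2(m_{t_n},m^*)\to0$ --- which is the conclusion. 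Your deduction ``$\mathcal D(m_{t_n})\to 0$ hence $V^\sigma(m_{t_n})\to V^\sigma(m^*)$'' is therefore circular. Your fallback ``literal LaSalle'' route has the same hole in a different guise: you assert $V^\sigma$ is constant on the $\omega$-limit set, but $V^\sigma$ is only lower semicontinuous for $\cW_2$ (the entropy is not continuous), so along $m_{t_n}\to\mu$ you only get $V^\sigma(\mu)\le\lim_tV^\sigma(m_t)$, which does not force the dissipation to vanish on the whole limit set.

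The paper resolves exactly these two points. In Step~1 it does not claim constancy of $V^\sigma$ on $w(m_0)$; it takes $\tilde m=\argmin_{w(m_0)}V^\sigma$ (which exists by lower semicontinuity and compactness), uses \emph{backward} invariance to write $\tilde m=S(t)[\mu]$ with $\mu\in w(m_0)$, and deduces that $V^\sigma$ is constant along the orbit through $\tilde m$ only, whence $\mathcal D(\tilde m)=0$ and $\tilde m=m^*$ by Proposition~\ref{prop:firstorder}; this already gives $\cW_2(m_{t_n},m^*)\to0$ along a subsequence. Only then, in Step~2, is HWI invoked --- not to relate the energy gap to $\mathcal D$, but to prove upper semicontinuity of the entropy along that subsequence: $\int m_{t_n}(\log m_{t_n}-\log m^*)\le \cW_2(m_{t_n},m^*)(\sqrt{I_n}+C\cW_2(m_{t_n},m^*))$ with $I_n=I(m_{t_n}|m^*)$, and the key technical input is $\sup_nI_n<\infty$ (not $I_n\to0$), obtained from the F\"ollmer time-reversal/Malliavin representation \eqref{eq:explicitefisher} of $\nabla\log m_t$. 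Combined with $\cW_2(m_{t_n},m^*)\to0$ this gives $\lim_tV^\sigma(m_t)=V^\sigma(m^*)$, and Step~3 closes the argument by lower semicontinuity and uniqueness. You correctly flagged control of the Fisher information as ``where the real work lies,'' but the mechanism you would need (a uniform-in-time bound on $\int|\nabla\log m_t|^2m_t$ via time reversal, plus the reordering of the logic so that $\cW_2$-convergence to $m^*$ is established \emph{before} HWI is applied) is missing from the proposal.
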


\begin{rem}
	As mentioned, the main  contribution of this paper is to prove the $\cW_2$-convergence of the marginal laws of \eqref{Langevin} towards the invariant measure under the mild conditions (Assumption \ref{assum:energy}, \ref{assum:U} and  \ref{A1}). Note that it is possible to obtain exponential convergence result with extra conditions on the coefficients. More precisely, given the constants $C_F, \rho_F, C_U$ such that
	\beaa
	& |D_mF(m,x) - D_mF(m',x')|\leq
	C_F |x - x'| + \rho_F \cW_1(m, m')\q\mbox{for all $x,x'\in \dbR^d$ and $m,m'\in \cP(\dbR^d)$},&\\
	& (x - x' ) \cd  (\nabla U(x) - \nabla U(x')) \ge C_U |x-x'|^2 \q\mbox{for $|x-x'|$ big enough },&
	\eeaa
	Eberle, Guillin and Zimmer \cite{eberle2019quantitative} proved that there exists a  constant $\gamma$  depending on $\sigma$, $C_F$ and $C_U$ such that
	\bea\label{eq:exp_conv}
	\cW_1(m_t, m'_t) \le C e^{-(\g -\rho_F) t } \cW_1(m_0, m'_0),
	\eea
	where $(m'_t)_{t\in \dbR^+}, (m'_t)_{t\in \dbR^+}$ are the flows of marginal laws of the solutions to \eqref{Langevin} with the initial law $m_0,m'_0$, respectively. In particular, 
	\begin{itemize}
		\item the result \eqref{eq:exp_conv} only implies the exponential contraction provided that $\rho_F$ is small enough, that is, the mean field dependence must be small;
		
		\item the constant $\g$ is increasing in $\si$ and $C_U$, so $\g$ is big only if $\si$ or/and $C_U$ are large, that is, the optimization \eqref{eq:optimization} is over-regularized.
	\end{itemize}
\end{rem}

\section{Application to Gradient Descent of Neural Networks}
\label{sec applications and sgd}

Before proving the main results, we shall first apply them to study the minimization over a neural network. In particular, in Corollary \ref{cor:NNworks} we shall show that the marginal laws of the corresponding mean-field Langevin dynamics converge to the optimal weight of the neural network with $1$-hidden layer. 

Fix a locally Lipschitz function $\varphi:\mathbb R \to \mathbb R$ and
for $l\in \mathbb N$ define $\varphi^l : \mathbb R^l \to \mathbb R^l$ as the function given, for $z=(z_1,\ldots,z_l)^\top$ by 
$\varphi^l(z) = (\varphi(z_1),\ldots, \varphi(z_l))^\top$.
We fix $L\in \mathbb N$ (the number of layers), $l_k \in \mathbb N$, $k=0,1,\ldots L-1$ (the size of input to layer $k$) and $l_L \in \mathbb N$ (the size of the network output). 
A fully connected artificial neural network is then given by $\Psi = ((\alpha^1,\beta^1), \ldots, (\alpha^L, \beta^L)) \in \Pi$,
where, for $k=1,\ldots,L$, we have real $l^k\times l^{k-1}$ matrices $\alpha^k$ and real $l^k$-dimensional
vectors $\beta^k$. 	
We see that $\Pi = (\mathbb R^{l^1\times l^0}\times \mathbb R^{l^1}) \times (\mathbb R^{l^2\times l^1}\times \mathbb R^{l^2})\times\cdots\times (\mathbb R^{l^L\times l^{L-1}}\times \mathbb R^{l^L})$.
The artificial neural network defines a reconstruction function $\mathcal R\Psi : \mathbb R^{l^0} \to \mathbb R^{l^L}$ given recursively, for $z_0 \in \mathbb R^{l^0}$, by 
\[
(\mathcal R\Psi)(z^0) = \alpha^L z^{L-1} + \beta^L\,, \,\,\,\, 
z^k = \varphi^{l^k}(\mathcal \alpha^k z^{k-1} +\beta^k)\,,k=1,\ldots, L-1\,.
\]
If for each $k=1,\ldots, L-1$ we write $\alpha_i^k$, $\beta_i^k$ to denote the $i$-th row of the matrix $\alpha^k$ and vector $\beta^k$ respectively 
then we can write the reconstruction of the network equivalently as
\begin{equation}
	\label{eq ann realization 2}
	(\mathcal R\Psi)(z^0)_i = \alpha_{i}^L \cdot z^{L-1} + \beta_i^L\,, \,\,\,\, 
	(z^k)_i = \varphi(\alpha_i^k \cdot z^{k-1} + \beta_i^k)\,,k=1,\ldots, L-1\,.	
\end{equation}
We note that the number of parameters in the network is 
$
\sum_{i=1}^L (l_{k-1}l_k + l_k )\,.
$


Given a potential function $\Phi$ and training data $(y^j,z^j)_{j=1}^N$, $(y_j,z_j)\in \mathbb R^d$ one approximates
the optimal parameters by finding
\begin{equation}
	\label{eq:mincomplex}
	\argmin_{\Psi \in \Pi} \frac{1}{N}\sum_{j=1}^N \Phi \Big(y^j - (\mathcal R \Psi)(z^j) \Big)\,.
\end{equation}
This is a non-convex minimization problem, so in general hard to solve. Theoretically, the following universal representation theorem ensures that the minimum value should attain $0$, provided that $y=f(z)$ with a continuous function $f$.
\begin{thm}[Universal Representation Theorem]
	If an activation function $\f$ is bounded, continuous and non-constant, then 
	for any compact set $K\subset \dbR^d$ the set 
	\[
	\left\{(\mathcal R \Psi): \dbR^d \rightarrow\dbR : (\mathcal R \Psi)~\mbox{given by~\eqref{eq ann realization 2} with $L=2$ for some}~ n\in \mathbb N, \alpha^2_j, \beta^1_j\in \dbR, \alpha^1_j\in \dbR^d, j=1,\ldots, n \right\}
	\]
	is dense in $C(K)$. 
\end{thm}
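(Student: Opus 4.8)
The plan is to run the classical duality argument of Cybenko~\cite{cybenko1989approximation}, phrased through the Fourier transform so that it covers an arbitrary bounded, continuous, non-constant $\varphi$. Write $\mathcal S \subset C(K)$ for the set in the statement. Concatenating the hidden units of two two-layer networks realises their sum, and rescaling the output layer realises scalar multiples, so $\mathcal S$ is a linear subspace of $C(K)$, and hence so is $\overline{\mathcal S}$. Suppose, for contradiction, that $\overline{\mathcal S}\neq C(K)$. By the Hahn--Banach theorem there is a nonzero bounded linear functional on $C(K)$ vanishing on $\overline{\mathcal S}$, and by the Riesz representation theorem it is given by a nonzero finite signed Borel measure $\mu$ on the compact set $K$ with
\[
\int_K \varphi(\alpha\cdot z+b)\,\mu(\mathrm dz)=0\qquad\text{for all }\alpha\in\dbR^d,\ b\in\dbR .
\]
(If an output bias $\beta^2$ is included in the model it contributes only the extra constraint $\mu(K)=0$, which is recovered below.) It therefore suffices to show that any such $\mu$ vanishes.

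The next step is a reduction to one dimension. Fix $\alpha\in\dbR^d$ and let $\nu_\alpha$ be the push-forward of $\mu$ under $z\mapsto\alpha\cdot z$; it is a finite signed measure on $\dbR$ supported in a bounded interval, since $K$ is compact. The displayed identity reads $(\varphi*\check\nu_\alpha)(b)=0$ for all $b$, where $\check\nu_\alpha(A):=\nu_\alpha(-A)$. Since $\varphi$ is bounded it is a tempered distribution and $\check\nu_\alpha$ is compactly supported, so passing to Fourier transforms gives $\widehat\varphi\cdot\widehat{\check\nu_\alpha}=0$ in $\mathcal S'(\dbR)$. The function $g_\alpha:=\widehat{\check\nu_\alpha}$ is real-analytic on $\dbR$ (expand $e^{i\xi t}$ in a power series, which converges uniformly on the bounded support of $\check\nu_\alpha$; all derivatives of $g_\alpha$ are bounded, so the product above is legitimate), hence either $g_\alpha\equiv 0$, in which case already $\nu_\alpha=0$, or the zero set $Z(g_\alpha)$ is discrete. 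Because the product of the distribution $\widehat\varphi$ with the smooth, nowhere-zero-on-$\{g_\alpha\neq0\}$ function $g_\alpha$ vanishes, in either case $\operatorname{supp}\widehat\varphi\subseteq Z(g_\alpha)$.

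Finally one invokes the hypotheses on $\varphi$. A distribution supported in $\{0\}$ is a finite linear combination of derivatives of $\delta_0$, so its inverse Fourier transform is a polynomial; a bounded polynomial is constant, and $\varphi$ is non-constant, so $\operatorname{supp}\widehat\varphi\not\subseteq\{0\}$ and it is nonempty. Pick $\xi_0\in\operatorname{supp}\widehat\varphi$ with $\xi_0\neq0$. Then $g_\alpha(\xi_0)=0$ for every $\alpha$, and a direct computation gives $g_\alpha(\xi_0)=\widehat{\nu_\alpha}(-\xi_0)=\widehat\mu(-\xi_0\alpha)$, where $\widehat\mu(\zeta):=\int_Ke^{-i\zeta\cdot z}\mu(\mathrm dz)$. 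As $\alpha$ ranges over $\dbR^d$ with $\xi_0\neq0$ fixed, $-\xi_0\alpha$ ranges over all of $\dbR^d$, whence $\widehat\mu\equiv0$ and therefore $\mu=0$ (in particular $\mu(K)=\widehat\mu(0)=0$). This contradicts $\mu\neq0$, so $\overline{\mathcal S}=C(K)$.

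The crux is handling a general bounded continuous non-constant $\varphi$ rather than a sigmoidal or ``discriminatory'' one: the Fourier route does this uniformly, and the single fact that genuinely uses all three hypotheses is that $\operatorname{supp}\widehat\varphi$ meets $\dbR\setminus\{0\}$ (non-constant plus bounded forces $\varphi$ to be a non-polynomial, hence its transform is not supported at the origin). A careful write-up also needs the standard distributional facts used above --- analyticity of the Fourier transform of a compactly supported measure, and that $Tg=0$ with $g$ smooth forces $\operatorname{supp}T\subseteq\{g=0\}$ --- which are classical and can be cited. An alternative, for $\varphi\in C^\infty$, is to differentiate $\lambda\mapsto\varphi(\lambda\,\alpha\cdot z+b)$ at $\lambda=0$ to show that all ridge monomials $z\mapsto(\alpha\cdot z)^k$ lie in $\overline{\mathcal S}$, hence all polynomials do, and conclude by Stone--Weierstrass, with the smoothness hypothesis removed by mollification; I would nonetheless present the Fourier argument as the main proof.
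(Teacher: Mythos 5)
Your proof is correct, but it is worth noting that the paper does not actually prove this theorem: it states it and refers the reader to Hornik's paper (\cite[Theorem 2]{Hor91}) for ``an elementary proof''. Hornik's route goes through $L^p(\mu)$-density of ridge functions for finite compactly supported measures and then upgrades to uniform density on compacta, carefully avoiding Fourier machinery; what you give instead is the self-contained Cybenko-style duality argument (Hahn--Banach plus Riesz representation) combined with the distributional Fourier criterion that a bounded continuous activation is ``discriminatory'' iff it is non-polynomial --- essentially the Leshno--Lin--Pinkus--Schocken characterisation specialised to the bounded case. Your argument is sound at every step I checked: the reduction to the one-dimensional pushforward $\nu_\alpha$, the identity $(\varphi*\check\nu_\alpha)(b)=0$, the support inclusion $\operatorname{supp}\widehat\varphi\subseteq Z(g_\alpha)$ obtained by dividing by $g_\alpha$ where it is nonzero, the exclusion of $\operatorname{supp}\widehat\varphi\subseteq\{0\}$ (bounded non-constant $\Rightarrow$ non-polynomial), and the final sweep $\widehat\mu(-\xi_0\alpha)=0$ for all $\alpha$. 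The only places requiring citation rather than proof are the exchange formula $\widehat{\varphi*\check\nu}=\widehat\varphi\,\widehat{\check\nu}$ for $\varphi\in\mathcal S'$ and $\check\nu\in\mathcal E'$, and the legitimacy of multiplying a tempered distribution by $g_\alpha$ (smooth with all derivatives bounded, since $\nu_\alpha$ is a finite measure of compact support); you flag both correctly. What your approach buys is a complete proof inside the paper and a transparent identification of exactly which hypotheses on $\varphi$ are used; what the paper's citation buys is brevity and an argument (Hornik's) that avoids distribution theory altogether. Either is acceptable here, since the theorem is quoted only for motivation and is not used in the proofs of the main results.
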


\no For an elementary proof, we refer the readers to  \cite[Theorem 2]{Hor91}.

\subsection{Fully connected 1-hidden layer neural network}
\label{sec fully connected 1-hidden layer}


Take $L=2$, fix $d\in \mathbb N$ and $n\in \mathbb N$
and consider the following 1-hidden layer neural network for approximating
functions from $\mathbb R^d$ to $\mathbb R$:
let $l_0 = d$, let $l_1 = n$, let $\beta^2 = 0 \in \mathbb R$, $\beta^1 = 0\in \mathbb R^n$,  $\alpha^1 \in \mathbb R^{n \times d}$. 
We will denote, for $i\in\{1,\ldots,l^0\}$, its $i$-th row by  $\alpha^1_i \in \mathbb R^{1 \times d}$.
Let $\alpha^2 = (\frac{c_1}{n},\cdots,\frac{c_{n}}{n})^\top$, where $c_i \in \mathbb R$. 
The neural network is $\Psi^n = \Big((\alpha^1,\beta^1), (\alpha^2, \beta^2) \Big)$ (where we emphasise the that the size of the hidden layer is $n$).
For $z\in \mathbb R^{l^0}$, its reconstruction can be written as
\[
(\mathcal R \Psi^n)(z) = \alpha^2 \varphi^{l^1}( \alpha^1  z   )
= \frac{1}{n}\sum_{i=1}^{n} c_i \varphi( \alpha^1_i \cd z   )\,.
\]	

\no The key observation is to note that,  due to law of large numbers (and under appropriate technical assumptions)  
$\frac{1}{n}\sum_{j=1}^n c_j\f (\alpha^1_j\cd z) \rightarrow \mathbb{E}^{m}[B\f(A\cdot z)]$ as $n \to \infty$, where $m$ is the law of the pair of random variables $(B, A)$ and $\dbE^m$ is the expectation under the measure $m$. Therefore, another way (indeed a more intrinsic way regarding to the universal representation theorem) to formulate the minimizaiton problem \eqref{eq:mincomplex} is:
\beaa
\min_{m \in \mathcal P(\dbR^d\times \dbR)}  \tilde F(m)\,, \q 
\mbox{where}\q  \tilde F(m) := \int_{\mathbb{R}^{d}}\Phi\big(y-\mathbb{E}^{m}[B\f(A\cdot z)]\big)\,\nu(\mathrm{d}z,\mathrm{d}y)\,.
\eeaa
For technical reason, we introduce a truncation function $\ell:\dbR\rightarrow K$ ($K$ denotes again some compact set), and consider the truncated version of the minimization:
\beaa
F(m):= \int_{\mathbb{R}^{d}}\Phi\big(y-\mathbb{E}^{m}[\ell(B)\f(A\cdot z)]\big)\,\nu(\mathrm{d}z,\mathrm{d}y).
\eeaa
It is crucial to note that in the reformulation the objective function $F$ becomes a convex function on $\cP(\dbR^d)$, provided that $\Phi$ is convex. 

\begin{assum}\label{assum:nn}
	We apply the following assumptions on the coefficients $\Phi, \mu, \f, \ell$:
	\begin{enumerate}[i)]
		\item the function $\Phi$ is convex, smooth and $0=\Phi(0) =\min_{a\in \dbR} \Phi(a)$;
		\item the data measure $\mu$ is of compact support;
		\item the truncation function $\ell\in C^\infty_b (\dbR^d)$ such that $\dot{\ell}$ and $\ddot\ell$ are bounded;
		\item the activation function $\f \in C^\infty_b (\dbR^d)$ such that $\dot{\f}$ and $\ddot\f$ are bounded.
	\end{enumerate}
\end{assum}

\begin{cor}\label{cor:NNworks}
	Under Assumption \ref{assum:nn}, the function $F$ satisfies Assumption  \ref{assum:energy}, \ref{A1}.  In particular, with a Gibbs measure of which the function $U$ satisfies Assumption \ref{assum:U}, the corresponding mean field Langevin equation \eqref{Langevin} admits a unique strong solution, given $m_0\in \cP_2(\dbR^d)$.  Moreover,  the flow of marginal laws of the solution, $(m_t)_{t\in \dbR^+}$, satisfies
	\beaa
	\lim_{t\rightarrow +\infty}\cW_2 \left(m_t , \argmin_{m\in \cP(\dbR^d)} V^\si(m)\right) =0.
	\eeaa
\end{cor}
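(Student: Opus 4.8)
The plan is to read this off from the general theory: I will check that the concrete energy $F$ built from the one--hidden--layer network satisfies the abstract hypotheses Assumption~\ref{assum:energy} and Assumption~\ref{A1}, and then invoke Proposition~\ref{prop:wellpose} for well-posedness, Proposition~\ref{prop:firstorder} for existence and uniqueness of the minimiser $m^*$, and Theorem~\ref{thm:convergence} for the $\cW_2$-convergence of the marginal laws.

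First I would set up notation: write a parameter as $x=(b,a)$ ($b$ the output weight, $a$ the input-weight row) and put $g(x,z):=\ell(b)\varphi(a\cdot z)$, so that $F(m)=\int_{\dbR^d}\Phi\big(y-T_z(m)\big)\,\nu(\mathrm{d}z,\mathrm{d}y)$ with $T_z(m):=\int g(x,z)\,m(\mathrm{d}x)$ \emph{linear} in $m$. Convexity of $F$ is then immediate, since $m\mapsto y-T_z(m)$ is affine, $\Phi$ is convex, and integration against the positive measure $\nu$ preserves convexity; boundedness from below follows from $\Phi\ge 0$. Expanding $\lambda\mapsto\Phi\big(y-T_z((1-\lambda)m+\lambda m')\big)$ to first order and using Fubini identifies the linear functional derivative (up to the normalising constant) as $\frac{\delta F}{\delta m}(m,x)=-\int\Phi'\big(y-T_z(m)\big)g(x,z)\,\nu(\mathrm{d}z,\mathrm{d}y)$, hence $D_mF(m,x)=-\int\Phi'\big(y-T_z(m)\big)\nabla_x g(x,z)\,\nu(\mathrm{d}z,\mathrm{d}y)$ with $\nabla_x g(x,z)=\big(\dot\ell(b)\varphi(a\cdot z),\,\ell(b)\dot\varphi(a\cdot z)z\big)$.

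Next I would verify the quantitative requirements of Assumption~\ref{A1}. The decisive observation --- and the reason the truncation $\ell$ is built into the model --- is that, by Assumption~\ref{assum:nn}, $\supp\nu$ is compact and $\ell,\varphi$ are bounded, so $y-T_z(m)$ stays in a compact set $K'$ that does \emph{not} depend on $m$; therefore $\Phi'$ and $\Phi''$ are bounded and uniformly continuous on $K'$. Combined with boundedness of $\ell,\dot\ell,\ddot\ell,\varphi,\dot\varphi,\ddot\varphi$ and of $z\in\supp\nu$, this yields in one stroke: $\frac{\delta F}{\delta m}$ and $D_mF$ are bounded (so $F\in\cC^1$ and Assumption~\ref{assum:energy} holds); $D_mF(m,\cdot)\in C^\infty$ by differentiation under the integral, all $x$-derivatives of $g$ being bounded; $D_mF$ is Lipschitz in $x$ uniformly in $m$ (since $\nabla_x g$ is Lipschitz in $x$, using $\ddot\ell,\ddot\varphi$ bounded) and Lipschitz in $m$ for $\cW_1\le\cW_2$ (since $T_z$ is $\cW_1$-Lipschitz, $g(\cdot,z)$ being Lipschitz uniformly in $z$, and $\Phi'$ is Lipschitz on $K'$); and $\nabla D_mF$ is jointly continuous, by dominated convergence using that $m\mapsto T_z(m)$ is $\cW_2$-continuous uniformly in $z$. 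This establishes Assumptions~\ref{assum:energy} and~\ref{A1} for $F$.

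Finally I would assemble the conclusion: with $U$ satisfying Assumption~\ref{assum:U}, Proposition~\ref{prop:wellpose} gives the unique strong solution of \eqref{Langevin} for $m_0\in\cP_2(\dbR^d)$, Proposition~\ref{prop:firstorder} identifies the unique minimiser $m^*=\argmin_m V^\sigma(m)$, and Theorem~\ref{thm:convergence} yields $\cW_2(m_t,m^*)\to 0$ (to be read under its moment hypothesis $m_0\in\cup_{p>2}\cP_p(\dbR^d)$). There is no deep obstacle here; the one point that genuinely needs care is keeping \emph{every} estimate uniform in the measure variable, and this is exactly what the truncation $\ell$ buys --- without it $T_z(m)$ could be arbitrarily large and neither the boundedness of $D_mF$ nor the control of $\Phi',\Phi''$ would survive.
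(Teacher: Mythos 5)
Your proposal is correct and follows essentially the same route as the paper: the paper's own proof simply writes down the formulas for $\frac{\delta F}{\delta m}$ and $D_mF$, declares the verification of Assumptions~\ref{assum:energy} and~\ref{A1} ``straightforward'' (your compactness-of-$\supp\nu$ plus truncation argument is exactly the missing verification), and then cites Proposition~\ref{prop:wellpose} and Theorem~\ref{thm:convergence}. Your parenthetical remark that the convergence conclusion should be read under the moment hypothesis $m_0\in\cup_{p>2}\cP_p(\dbR^d)$ of Theorem~\ref{thm:convergence} is a fair and careful observation that the paper's statement of the corollary (which assumes only $m_0\in\cP_2$) glosses over.
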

\begin{proof}
	Let us define, for $x=(\beta,\alpha)\in \mathbb R^d$, $\beta \in \mathbb R$, $\alpha \in \mathbb R^{d-1}$ and $z\in \mathbb R^{d-1}$ the function $\hat \varphi(x,z) := \ell(\beta)\varphi(\alpha \cdot z)$.
	Then
	\beaa
	&\frac{\d F}{\d m}(m,x) 
	= - \int_{\dbR^d} \dot \Phi\big(y- \dbE^m [\hat \varphi(X, z)] \big) \hat \varphi(x,z)\, \nu(\mathrm{d}z,\mathrm{d}y) \\
	& \mbox{and}\q D_m F (m,x) 
	= - \int_{\dbR^d } \dot \Phi\big(y -\dbE^m [\hat \varphi(X, z)]\big) 
	\nabla \hat \varphi(x,z)\,\nu(\mathrm{d}z,\mathrm{d}y)\,.
	\eeaa
	Then it becomes straightforward to verify that $F$ satisfies both Assumption \ref{assum:energy}, \ref{A1}.
	The rest of the result is direct from Proposition \ref{prop:wellpose} and Theorem \ref{thm:convergence}.
\end{proof}

\subsection{Gradient Descent}
\label{sec gradient descent}

Consider independent random variables $(X_0^i)_{i=1}^N$, $X_0^i \sim m_0$ and independent Brownian motions $(W^i)_{i=1}^N$. By approximating the law of the process \eqref{Langevin} by its empirical law we arrive at the following interacting particle system 
\begin{equation}\label{eq particles}
	\begin{cases}
		\mathrm{d}X^i_t =& -  \Big(D_m F(m^N_t , X^i_t ) + \frac{\sigma^2}{2} \, \nabla U(X^i_t) \Big) \mathrm{d}t +\si \mathrm{d}W^i_t\,\,\, i=1,\ldots, N\,, \\
		m^N_t =& \frac1N \sum_{i=1}^N \delta_{X_t^i}\,.
	\end{cases}
\end{equation}
Note that particles $(X^i)_{i=1}^N$ are not independent, but their laws are exchangeable. Recall the link between partial derivatives and measure derivative 
given by~\eqref{eq projection derivative} and for any $(x^1,\ldots,x^N)\in (\mathbb R^d)^N$ let $F^N(x^1,\ldots, x^N) = F\left(\frac1N\sum_{i=1}^N \delta_{x^i}\right)$.
Then
\[
\mathrm{d}X^i_t = - \Big(N \partial_{x_i} F^N(X^1_t,\ldots,X^N_t) + \frac{\sigma^2}{2} \, \nabla U(X^i_t) \Big) \mathrm{d}t +\si \mathrm{d}W^i_t\,.
\]
Let us define, for $x=(\beta,\alpha)\in \mathbb R^d$, $\beta \in \mathbb R$, $\alpha \in \mathbb R^{d-1}$ and $z\in \mathbb R^{d-1}$ the function $\hat \varphi(x,z) := \ell(\beta)\varphi(\alpha \cdot z)$.
Then for $(x^i)_{i=1}^N$ we have
\[
F^N(x) = \int_{\mathbb{R}^{d}}\Phi\bigg(y- \frac1N \sum_{j=1}^N \hat \varphi(x^j,z)\bigg)\,\nu(\mathrm{d}z,\mathrm{d}y).
\]
Hence
\[
\partial_{x^i} F^N (x^1,\ldots,x^N) 
= - \frac1N \int_{\dbR^d} \dot \Phi\bigg(y- \frac1N \sum_{j=1}^N\hat \varphi(x^j, z) \bigg) \nabla \hat \varphi(x^i,z) \nu(\mathrm{d}z,\mathrm{d}y)\,,
\]
where we denote for all $z\in \dbR^{d-1}$
\[
\nabla \hat \varphi(x^i,z) = \nabla_{(\beta^i,\alpha^i)}[\ell(\beta^i)\varphi(\alpha^i z)] 
= \Bigl( \begin{smallmatrix} \dot\ell (\b^i) \f (\a^i\cd z)\\ \ell(\b^i) \dot\f (\a^i\cd z) z\end{smallmatrix}\Bigr)\,. 
\]
We thus see that~\eqref{eq particles} corresponds to 
\[
\mathrm{d}X^i_t = \left( \int_{\dbR^d} \dot \Phi\bigg(y- \frac1N \sum_{j=1}^N\hat \varphi(X_t^j, z) \bigg) \nabla \hat \varphi(X_t^i,z)  \nu(\mathrm{d}z,\mathrm{d}y) - \frac{\sigma^2}{2}  \nabla U(X^i_t) \right) \mathrm{d}t +\si \mathrm{d}W^i_t.
\]
This is classical Langevin dynamics
\eqref{classicLangevin} on $(\mathbb R^d)^N$. 
One may reasonably expect that the a version of Theorem~\ref{th static particles} can be proved in this dynamical setup. 
This has been done for finite time horizon problem in \cite{chassagneux2019weak}. The extension to the infinite horizon requires uniform in time regularity of the corresponding PDE on Wasserstein space $(\mathcal W_2, \mathcal P_2)$ and we leave it for a future research. However rate for uniform propagation of chaos in $\mathcal W_1$ under structural condition on the drift has been proved in \cite{durmus2018elementary}. We also remark that for the implementable algorithm one works with time discretisation of \eqref{eq particles} and, at least for the finite time, the error bounds are rather well understood \cite{bossy1997stochastic,bossy2002rate,malrieu2003convergence,szpruch2017iterative,szpruch2019antithetic}.  

For a fixed time step $\tau > 0$ fixing a grid of time points $t_k = k\tau$, $k=0,1,\ldots$ we can then write the explicit Euler scheme
\begin{multline*}
	X^{\tau,i}_{t_{k+1}} - X^{\tau,i}_{t_k} \\
	= \left( \int_{\dbR^d} \dot \Phi\bigg(y- \frac1N \sum_{j=1}^N\hat \varphi(X_{t_k}^{\tau,j}, z) \bigg) \nabla \hat \varphi(X_{t_k}^{\tau,i},z)  \nu(\mathrm{d}z,\mathrm{d}y) - \frac{\sigma^2}{2} \nabla U(X^{\tau,i}_{t_k}) \right) \tau +\si (W^i_{t_{k+1}} - W^i_{t_k})\,.
\end{multline*}
To relate this to the gradient descent algorithm we consider the case where 
we are given data points $(y_m, z_m)_{n\in \mathbb N}$ which are i.i.d. samples from $\nu$. 
If the loss function $\Phi$ is simply the square loss then a version of the (regularized) gradient descent algorithm for the evolution of parameter $x_k^i$ will simply read as
\[
x^i_{k+1} = x^i_k + 2\tau\left( \bigg(y_k - \frac1N \sum_{j=1}^N\hat \varphi(x_k^j, z_k) \bigg) \nabla \hat \varphi(x_k^i, z^k)  - \frac{\sigma^2}{2} \, \nabla U(x^i_k) \right) + \si \sqrt{\tau}\xi_k^i\,,
\]
with $\xi_k^i$ independent samples from $N(0,I_d)$.

\section{Free Energy Function}
\label{sec free energy fn}

In this section, we study the properties concerning the minimizer of the free energy function  $V^\si$.
First, we prove that $V^\si$ is an approximation of $F$ in the sense of $\Gamma$-convergence. 

\begin{proof}[Proof of Proposition \ref{prop:Gamma}]
	Let $ (\sigma_n)_{n\in\mathbb{N}} $ be a positive sequence decreasing to $ 0 $. 
	On the one hand, since $ F $ is continuous and $ H(m)\geq0 $, for all $ m_n \rightarrow m $, we have
	\beaa
	\liminf_{n\to+\infty}V^{\sigma_n}(m_n)\geq \lim_{n\to+\infty}F(m_n) = F(m).
	\eeaa
	On the other hand, given $ m\in\mathcal{P}_2(\mathbb{R}^d) $, since the function 
	\bea\label{eq:funh}
	h(x):=x\log(x) 
	\eea
	is convex, it follows  Jensen's inequality that
	\begin{equation*}
		\int_{\mathbb{R}^{d}}h(m\ast f_n)\mathrm{d}x \leq \int_{\dbR^d} \int_{\dbR^d} h\big(f_n(x-y)\big) m(\mathrm{d}y) \mathrm{d}x = \int_{\dbR^d} h\big(f_n(x)\big) \mathrm{d}x 
		= \int_{\dbR^d} h\big(f(x)\big) \mathrm{d}x - d \log(\si_n),
	\end{equation*}
	where $f$ is the heat kernel and $ f_n(x) = \si_n^{-d}f(x/\si_n) $. Besides, we have
	\beaa
	\int_{\mathbb{R}^{d}} (m\ast f_n )\log(g)\mathrm{d}x 
	= - \int_{\mathbb{R}^{d}}m(\mathrm{d}y) \int_{\mathbb{R}^{d}}  f_n(x)U(x-y)\mathrm{d}x \ge - C\Big(1+\int_{\mathbb{R}^{d}} |y|^2 m(\mathrm{d}y) \Big).
	\eeaa
	The last inequality is due to the quadratic growth of $U$. Therefore
	\begin{equation}\label{eq:limsupGamma}
		\limsup_{n\to+\infty}V^{\sigma_n}(m\ast f_n) 
		\leq F(m) + \limsup_{n\to+\infty} \frac{\si_n^2}{2}\Big\{ \int_{\mathbb{R}^{d}}h(m\ast f_n)\mathrm{d}x - \int_{\mathbb{R}^{d}} (m\ast f_n )\log(g)\mathrm{d}x  \Big\} \le F(m).
	\end{equation}
	In particular, given a minimizer $m^{*,\si}$ of $V^\si$,  by \eqref{eq:limsupGamma} we have
	\beaa
	\limsup_{n\rightarrow \infty}  F(m^{*,\si_n}) 
	~\le ~  \limsup_{n\rightarrow \infty}  V^\si(m^{*,\si_n}) 
	~\le ~ \limsup_{n\to+\infty}V^{\sigma_n}(m\ast f_n) 
	~\le~  F(m), \q\mbox{for all}\q m\in \cP_2(\dbR^d).
	\eeaa
	
\end{proof}

\begin{proof}[Proof of Theorem~\ref{th static particles}] 
	Let $\mu \in \mathcal P_2(\mathbb R^d)$ be arbitrary.
	Let $(X_i)_{i=1}^N$ be i.i.d. with law $\mu$.
	Let $\mu_N = \frac1N \sum_{i=1}^N {\delta_{X_i}}$
	and $m^N_t = \mu + t (\mu_N-\mu)$, $t \in [0,1]$.
	Further let  $(\tilde{X}_i)_{i=1}^N$ be consider i.i.d., independent of $(X_i)_{i=1}^N$ with law $\mu$.
	
	By the definition of linear functional derivatives, we have
	\begin{align}
		\mathbb E [ F(\mu_N)] -  F(\mu) &= 
		\mathbb E \left[\int_0^1 \int_{\mathbb R^d} \frac{\delta F}{\delta m}(m^N_t, v)\, (\mu_N-\mu)(\mathrm{d}v) \mathrm{d} t\right] \nonumber 
		\\
		&=\int_0^1  \frac1N\sum_{i=1}^N\left( \mathbb E \left[ \frac{\delta F}{\delta m}(m^N_t, X_1)\right] -\mathbb E\left[ \frac{\delta F}{\delta m}(m^N_t, \tilde{X}_1)\right] \right) \mathrm{d} t \nonumber 
		\\
		&=\int_0^1 \mathbb E \left[ \frac{\delta F}{\delta m}(m^N_t,X_1) -\frac{\delta F}{\delta m}(m^N_t, \tilde{X}_1)\right] \mathrm{d} t\,.
	\end{align}
	We introduce the (random) measures
	\begin{align*}
		\tilde{m}^{N}_{t} :=  {m}^{N}_{t} + \frac{t}{N} (\delta_{\tilde{X}_1} - \delta_{{X}_1}) \quad \text{and} \quad {m}^{N}_{t,t_1}:= (\tilde{m}^{N}_{t}-m^{N}_t)t_1 + m^{N}_t, 
		\quad t,t_1 \in [0,1],
	\end{align*}
	and notice that due to independence of $(X_i)_{i=1}^N$ and $(\tilde{X}_i)_{i=1}^N$ we have that
	\begin{align*}
		\mathbb E \left[\frac{\delta F}{\delta m}(\tilde{m}^{N}_{t}, \tilde{X}_1)\right] = \mathbb E\left[\frac{\delta F}{\delta m}({m}^{N}_{t},X_1)\right].
	\end{align*}
	Therefore, 
	\begin{equation} \label{eq fmu}
		\begin{split}
			\mathbb E [ F(\mu_N) - F(\mu)]  = &  \int_0^1  \mathbb E \left[ \frac{\delta F}{\delta m}(\tilde{m}^{N}_{t},\tilde{X}_1)-\frac{\delta F}{\delta m}(m^N_t, \tilde{X}_1)\right]  \mathrm{d} t  \\
			= &  \int_0^1 \mathbb E \left[ \int_0^1  \int_{\mathbb R^d} \frac{\delta^2 F}{\delta m^2}( {m}^{N}_{t,t_1},\tilde{X}_1,y_1) (\tilde{m}^{N}_{t} - {m}^{N}_{t}) (\mathrm{d}y_1) \, \mathrm{d}t_1 \right]  \mathrm{d} t  \\
			= & \frac1{N}  \mathbb E \left[ \int_0^1 \int_0^1  \int_{\mathbb R^d} t \frac{\delta^2 F}{\delta m^2}( {m}^{N}_{t,t_1},\tilde{X}_1,y_1) (\delta_{\tilde{X}_1} - \delta_{{X}_1}) (\mathrm{d}y_1) \, \mathrm{d}t_1 \mathrm{d} t \right]. 
		\end{split}
	\end{equation}
	To conclude, we observe that
	\begin{equation*}
		\begin{split}
			& \mathbb E\left[\bigg| \int_{\mathbb R^d}\frac{\delta^2 F}{\delta m^2}( {m}^{N}_{t,t_1})(\tilde{X}_1,y_1) (\delta_{\tilde{X}_1} - \delta_{{X}_1}) (\mathrm{d}y_1)\bigg|\right]\\
			& = \mathbb E\left[ \bigg| \int_{\mathbb R^d}\frac{\delta^2 F}{\delta m^2}( {m}^{N}_{t,t_1})(\tilde{X}_1,y_1) \delta_{\tilde{X}_1} (\mathrm{d}y_1) -  \int_{\mathbb R^d}\frac{\delta^2 F}{\delta m^2}( {m}^{N}_{t,t_1})(\tilde{X}_1,y_1) \delta_{{X}_1}(\mathrm{d}y_1)\bigg|\right]\\
			& \leq \mathbb E\left[\sup_{\nu \in \mathcal P_2(\mathbb R^d)} \left|\frac{\delta^2 F}{\delta m^2}( \nu)(\tilde{X}_1,\tilde{X}_1)\right|
			+ \sup_{\nu \in \mathcal P_2(\mathbb R^d)} \left|\frac{\delta^2 F}{\delta m^2}( \nu)(\tilde{X}_1,{X}_1)\right|\right]
			\leq 2L\,,	
		\end{split}	
	\end{equation*}
	by~\eqref{as int}. 
	We have thus shown that for all $\mu \in \mathcal P_2(\mathbb R^d)$,
	for all i.i.d. $(X_i)_{i=1}^N$ with law $\mu$ and with $\mu_N = \frac1N \sum_{i=1}^N {\delta_{X_i}}$
	it holds that
	\begin{equation}
		\label{eq fmu2}
		|\mathbb E [ F(\mu_N)]  - F(\mu)| \leq \frac{2L}N
		\,.
	\end{equation}
	From~\eqref{eq fmu2} with i.i.d $(X^\ast_i)_{i=1}^N$ such that $X^\ast_i \sim m^\ast$, $i=1,\ldots, N$ we get that   
	\[
	\left| \mathbb E \left[ F\left(\frac1N \sum_{i=1}^N \delta_{X^\ast_i}\right)\right]  -  F(m^\ast) \right| \leq \frac{2L}N \,.
	\]
	Let $(X^\ast_i)_{i=1}^N$ be i.i.d. such that $X^\ast_i \sim m^\ast$, $i=1,\ldots, N$.
	Note that
	\[F(m^\ast) \leq \inf_{(x_i)_{i=1}^N\subset \mathbb R^d} F\left(\frac1N \sum_{i=1}^N \delta_{x_i}\right) \leq \mathbb E \left[ F\left( \frac1N \sum_{i=1}^N \delta_{X^\ast_i} \right)\right]\,.
	\] 
	From this and~\eqref{eq fmu2} we then obtain
	\[
	0\le \inf_{(x_i)_{i=1}^N\subset \mathbb R^d} F\left(\frac1N \sum_{i=1}^N \delta_{x_i}\right) -  F(m^\ast)  \leq \frac{2L}N \,.
	\]
	
\end{proof}

\no In the rest of the section, we shall discuss the first order condition for the minimizer of the function $V^\si$.  
We first show an elementary lemma for convex functions on $\cP(\dbR^d)$.

\begin{lem}
	Under Assumption \ref{assum:energy}, given $m,m'\in \cP(\dbR^d)$, we have
	\bea\label{eq:convexfirstorder}
	F(m') - F(m) \ge \int_{\dbR^d} \frac{\d F}{\d m}(m,x)
	(m'-m)(\mathrm{d}x).
	\eea
\end{lem}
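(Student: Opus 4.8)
The plan is to reduce the claimed subgradient inequality to an elementary statement about the scalar convex function obtained by restricting $F$ to the segment joining $m$ and $m'$. Set $m_\lambda := (1-\lambda)m + \lambda m'$ for $\lambda \in [0,1]$ and $\phi(\lambda) := F(m_\lambda)$. Convexity of $F$ (Assumption~\ref{assum:energy}) gives, for every $\lambda \in (0,1)$,
\[
\phi(\lambda) \le (1-\lambda)\phi(0) + \lambda \phi(1), \qquad \text{hence} \qquad \frac{F(m_\lambda) - F(m)}{\lambda} \le F(m') - F(m).
\]
So it suffices to identify the limit of the difference quotient on the left, as $\lambda \downarrow 0$, with $\int_{\dbR^d} \frac{\d F}{\d m}(m,x)(m'-m)(\mathrm{d}x)$.

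For that I would invoke the defining identity~\eqref{C1derv} for $F \in \cC^1$, applied to the pair $(m, m_\lambda)$. Since $(1-s)m + s m_\lambda = m_{s\lambda}$ and $m_\lambda - m = \lambda(m'-m)$, it reads
\[
F(m_\lambda) - F(m) = \lambda \int_0^1 \int_{\dbR^d} \frac{\d F}{\d m}(m_{s\lambda}, x)\,(m'-m)(\mathrm{d}x)\,\mathrm{d}s\,.
\]
Dividing by $\lambda$ and sending $\lambda \downarrow 0$: for each fixed $s \in [0,1]$ we have $m_{s\lambda} = (1-s\lambda)m + s\lambda m' \to m$ (weakly, indeed in total variation), so $\frac{\d F}{\d m}(m_{s\lambda}, x) \to \frac{\d F}{\d m}(m, x)$ by continuity of the linear functional derivative; moreover the inner integrals are bounded in absolute value by $2\sup_{\nu,x}\big|\frac{\d F}{\d m}(\nu,x)\big| < \infty$. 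Dominated convergence then gives
\[
\lim_{\lambda \downarrow 0}\frac{F(m_\lambda) - F(m)}{\lambda} = \int_0^1 \int_{\dbR^d} \frac{\d F}{\d m}(m, x)\,(m'-m)(\mathrm{d}x)\,\mathrm{d}s = \int_{\dbR^d} \frac{\d F}{\d m}(m, x)\,(m'-m)(\mathrm{d}x),
\]
and combining with the difference-quotient bound yields~\eqref{eq:convexfirstorder}.

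The only delicate point — and it is mild — is the interchange of limit and integral: one needs the continuity of $\nu \mapsto \frac{\d F}{\d m}(\nu, x)$ in the topology in which $\cC^1$ is defined (weak convergence of measures), together with a domination, and both are furnished directly by the boundedness and continuity of $\frac{\d F}{\d m}$ built into the definition of $\cC^1$. Note that no extra regularity of $m$ or $m'$ (absolute continuity, finite moments, etc.) is needed, and that the same computation in fact shows $\phi$ is differentiable with $\phi'(\lambda) = \int_{\dbR^d} \frac{\d F}{\d m}(m_\lambda, x)(m'-m)(\mathrm{d}x)$, so the inequality is precisely the statement that a differentiable convex function lies above its tangent, applied to $\phi$ at $\lambda = 0$.
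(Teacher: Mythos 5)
Your argument is correct and is essentially the paper's own proof: both bound the difference quotient $\frac{F(m_\lambda)-F(m)}{\lambda}$ from above by $F(m')-F(m)$ via convexity, rewrite $F(m_\lambda)-F(m)$ through the defining identity \eqref{C1derv}, and pass to the limit $\lambda\downarrow 0$ by dominated convergence using the boundedness and continuity of $\frac{\d F}{\d m}$. The only difference is cosmetic (your parametrisation $m_{s\lambda}$ versus the paper's $\int_0^\eps$), so nothing further is needed.
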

\begin{proof}
	Define $m^\eps: = (1-\eps)m + \eps m'$. Since $F$ is convex, we have 
	\beaa
	\eps\Big(F(m') - F(m)\Big) \ge F(m^\e) - F(m)=\int_{0}^{\eps}\int_{\mathbb{R}^{d}}\frac{\delta F}{\delta m}(m^s,x) (m' -m)(\mathrm{d}x)\mathrm{d}s
	\eeaa
	Since $ \frac{\delta F}{\delta m}$ is bounded and continuous, we obtain \eqref{eq:convexfirstorder} by the dominant convergence theorem. 
\end{proof}

\ms

\begin{proof}[Proof of Proposition \ref{prop:firstorder}]
	~\\
	{\it Step 1}.\q We first prove the existence of minimizer. Clearly there exists $ \bar{m}\in\mathcal{P}(\mathbb{R}^d) $ such that $ V^\sigma(\bar{m}) <+\infty $. Denote 
	\beaa
	\mathcal{S}:=\left\{ m:\frac{\sigma^2}{2}H(m) \leq V^\sigma(\bar{m}) - \inf_{m'\in\mathcal{P}(\mathbb{R}^d)} F(m') \right\}.
	\eeaa
	As a sublevel set of the relative entropy $H$, $\cS$ is weakly compact, see e.g. \cite[Lemma 1.4.3]{DE97}.  Together with the weak lower semi-continuity of $ V^\sigma $, the minimum of $V^\si$ on $ \mathcal{S} $ is attained. Notice that for all $ m\notin\mathcal{S} $, we have $ V^\sigma(m)\geq V^\sigma(\bar{m}) $, so  the minimum of $ V^\sigma $ on $ \mathcal{S} $ coincides with the global minimum. Further, since $V^\si$ is strictly convex, the minimizer is unique. 
	Moreover, given $ m^* =\argmin\limits_{m\in \cP(\dbR^d)}  V^\sigma(m) $,  we know $ m^*\in\mathcal{S} $, and thus we have  $ H(m^*)<\infty $ as well as $\dbE^{m^*}[U(X)]<\infty$. Therefore, $ m^* \in \cP_2(\dbR^d)$ is absolutely continuous with respect to the Gibbs measure, so also absolutely continuous with respect to the Lebesgue measure.
	
	\ms

	\no{\it Step 2}.\q \underline{Sufficient condition:} Let $m^*\in\cP_2(\dbR^d)$ satisfy \eqref{InvariantSet}, in particular, $m^*$ is equivalent to the Lebesgue measure. 
	Let $ m \in \cP(\dbR^d) $ such that $ m $ is absolutely continuous with respect to the Lebesgue measure (otherwise $ V^\sigma(m)=+\infty $).
	Let 
	\[ 
	f:=\frac{\mathrm{d}m}{\mathrm{d}m^*} 
	\]
	be the Radon-Nikodym derivative. Let $m^\eps := (1-\eps)m^* + \eps m = (1+\eps (f-1))m^* $ for $\eps>0$. For the simplicity of the notations, denote $ m^\eps(x) $ and $ m^*(x) $ the respective density function of $ m^\eps $ and $ m^* $ with respect to Lebesgue measure.
	Recall the function $h$ in \eqref{eq:funh} and note that $h(y) \ge y-1$ for all $y\in \dbR^+$. 
	Using \eqref{eq:convexfirstorder}, we obtain
	\[
	\frac{F(m^\eps) - F(m^\ast)}{\eps} \geq \frac1\eps \int_{\mathbb R^d} \frac{\delta F}{\delta m}(m^\ast,\cdot) (m^\eps - m^\ast)\,\mathrm{d}x
	= \int_{\mathbb R^d} \frac{\delta F}{\delta m}(m^\ast,\cdot) \, (f-1)m^\ast\,\mathrm{d}x\,.
	\]
	Moreover 
	\[
	\begin{split}
		\frac{\sigma^2}{2\eps}\bigg(H(m^\eps) - H(m^\ast)\bigg) 
		& = \frac{\sigma^2}{2\eps}\int_{\mathbb R^d} \bigg(m^\eps \log \frac{m^\eps}{g} - m^\ast \log \frac{m^\ast}{g} \bigg)\,\mathrm{d}x\\
		& = \frac{\sigma^2}{2\eps}\int_{\mathbb R^d} (m^\eps - m^\ast)\log\frac{m^\ast}{g}\,\mathrm{d}x + \frac{\sigma^2}{2\eps}\int_{\mathbb R^d} m^\eps \bigg(\log \frac{m^\eps}{g} - \log \frac{m^\ast}{g} \bigg)\,\mathrm{d}x\\
		& = \frac{\sigma^2}{2}\int_{\mathbb R^d} (f-1)m^\ast \log\frac{m^\ast}{g}\,\mathrm{d}x + \frac{\sigma^2}{2\eps}\int_{\mathbb R^d} m^\eps \log \frac{m^\eps}{m^\ast} \,\mathrm{d}x\\
		& = \frac{\sigma^2}{2}\int_{\mathbb R^d} (f-1)m^\ast \big(\log m^\ast + U\big)\,\mathrm{d}x + \frac{\sigma^2}{2\eps}\int_{\mathbb R^d} h(1+\eps(f-1))m^\ast \,\mathrm{d}x\\
		& \geq \frac{\sigma^2}{2}\int_{\mathbb R^d} (f-1)m^\ast \big(\log m^\ast + U\big)\,\mathrm{d}x + \frac{\sigma^2}{2}\int_{\mathbb R^d} (f-1)m^\ast \,\mathrm{d}x\\
		& = \frac{\sigma^2}{2}\int_{\mathbb R^d} (f-1)m^\ast \big(\log m^\ast + U\big)\,\mathrm{d}x
		\\
	\end{split}
	\]
	since $\int_{\mathbb R^d} (f-1)m^\ast \, \mathrm{d}x = \int_{\mathbb R^d} (m-m^\ast) \, \mathrm{d}x = 0$.
	Hence 
	\[
	\frac{V^\sigma(m^\eps)-V^\sigma(m^*)}{\eps} 
	\geq \int_{\mathbb R^d} \bigg(\frac{\delta F}{\delta m}(m^\ast,\cdot) + \frac{\sigma^2}{2} \log m^\ast + \frac{\sigma^2}{2} U \bigg) \, (f-1)m^\ast\,\mathrm{d}x = 0.
	\]

	\ms
	\no{\it Step 3}.\q   \underline{Necessary condition:} Let $ m^* $ be the minimizer of $ V^\sigma $. Let $ m $ a probability measure such that $ H(m)<\infty $, in particular $ m $ is also absolutely continuous with respect to Lebesgue measure $ \ell $. As above, denote $ m(x) $ and $ m^*(x) $ the respective density function of $ m $ and $ m^* $ with respect to Lebesgue measure and we have
	\beaa
	&&\frac{V^\sigma(m^\eps)-V^\sigma(m^*)}{\eps}\\ 
	&= &\frac{1}{\eps}\int_{0}^{\eps}\int_{\mathbb{R}^{d}}\frac{\delta F}{\delta m}(m^s,x)(m(x)-m^*(x))\mathrm{d}x\mathrm{d}s \\
	&&\q\q\q\q\q +  \frac{\sigma^2}{2\eps}\int_{\mathbb{R}^{d}}\left(h(m^\eps(x))-h(m^*(x))-\log(g(x))(m^\eps(x)-m^*(x))\right)\mathrm{d}x.
	\eeaa
	Since $ h $ is convex, we note that for all $ \eps\in(0,1) $
	\beaa
	\frac1{\eps}(h(m^\eps(x))-h(m^*(x))-\log(g(x))(m^\eps(x)-m^*(x)))\leq m(x)\log\left( \frac{m(x)}{g(x)}\right)-m^*(x)\log\left( \frac{m^*(x)}{g(x)}\right).
	\eeaa
	Since $ H(m) $ and $ H(m^*) $ are both finite, the right hand side of the above inequality is integrable. Therefore by Fatou's Lemma we obtain
	
	\begin{equation}\label{eq:nc}
		0 \leq \ol{\lim\limits_{\eps\to 0}}\frac{V^\sigma(m^\eps)-V^\sigma(m^*)}{\eps}\leq \int_{\dbR^d}\left(\frac{\delta F}{\delta m}(m^*,x) + \frac{\sigma^2}{2}\log(m^*(x)) +\frac{\sigma^2}{2}U(x) \right)(m(x)-m^*(x))\mathrm{d}x.
	\end{equation}
	Since $ m $ is arbitrary, we first obtain 
	\begin{equation*}
		\frac{\delta F}{\delta m}(m^*,\cd) + \frac{\sigma^2}{2}\log(m^*) + \frac{\sigma^2}{2}U~\text{ is a constant, }m^*-a.s.
	\end{equation*}
	Now suppose that $ m^* $ is not equivalent to Lebesgue measure. There exists a set $ \cK\subset\dbR^d $ such that $ m^*(\cK)=0 $ and $ \ell(\cK)>0 $. It follows from \eqref{eq:nc} that $ 0\leq C-\int_{\cK}\infty\mathrm{d}m $. Since we may choose $ m $ having positive mass on $ \cK $, it is a contradiction. Therefore, $ m^* $ is equivalent to Lebesgue measure and we have
	\begin{equation*}
		\frac{\delta F}{\delta m}(m^*,\cd) + \frac{\sigma^2}{2}\log(m^*) + \frac{\sigma^2}{2}U~\text{ is a constant, }\ell-a.s.
	\end{equation*}
	
\end{proof}

%

\section{Mean Field Langevin Equations}
\label{sec mf lang}

Recall that
\begin{equation*}
	b(x, m) := D_mF(m,x) + \frac{\sigma^2}{2} \nabla U(x).
\end{equation*}
Due to Assumption \ref{A1} and \ref{assum:U}, the function  $b$ is of linear growth.

\begin{lem}\label{lem:uniL2}
	Under Assumption \ref{assum:U}  and  \ref{A1}, let $X$ be the strong solution to \eqref{Langevin}. 
	If $m_0\in \cP_p(\dbR^d)$ for some $p\ge 2$, we have
	\bea\label{estimate:supP}
	\dbE\Big[ \sup_{t\le T} |X_t |^p \Big] \le C,\q\q\mbox{for some $C$ depending on $p,\si, T$}.
	\eea
	If $m_0\in \cP_p(\dbR^d)$ for some $p\ge 2$, we have
	\bea\label{estimate:compact}
	\sup_{t\in\dbR^+}\dbE\Big[  |X_t |^p \Big] \le C,\q\q\mbox{for some $C$ depending on $p,\si$}.
	\eea
	In particular, if $m_0\in \cup_{p>2}\cP_p(\dbR^d)$, then $(m_t)_{t\in \dbR^+}$ belong to a $\cW_2$-compact subset of $\cP_2(\dbR^d)$. 
\end{lem}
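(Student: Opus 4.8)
The plan is to apply It\^o's formula to the smooth function $\phi(x):=(1+|x|^2)^{p/2}$ (working with $\phi$ rather than $|x|^p$ avoids the lack of smoothness at the origin) along the strong solution $X$ of \eqref{Langevin}, whose existence is guaranteed by Proposition \ref{prop:wellpose}. Since $D_mF$ is bounded by Assumption \ref{A1} and $\nabla U$ is Lipschitz, hence of linear growth, by Assumption \ref{assum:U}, the drift $b(\cdot,m)$ is of linear growth uniformly in $m$. Combined with the elementary estimates $|\nabla\phi(x)|\le p(1+|x|^2)^{(p-1)/2}$ and $|\Delta\phi(x)|\le C(1+|x|^2)^{(p-2)/2}\le C\phi(x)$ (the last inequality using $p\ge 2$), this gives $|\nabla\phi(x)\cdot b(x,m)|+\tfrac{\sigma^2}{2}|\Delta\phi(x)|\le C\phi(x)$, so that, after taking expectations along a localising sequence of stopping times and applying Gronwall's lemma, $\sup_{t\le T}\dbE[\phi(X_t)]\le C(p,\sigma,T)$ whenever $m_0\in\cP_p(\dbR^d)$. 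For the supremum inside the expectation in \eqref{estimate:supP} I would then apply the Burkholder--Davis--Gundy inequality to the martingale term $\sigma\int_0^{\cdot}\nabla\phi(X_r)\cdot\mathrm{d}W_r$ and absorb the resulting contribution using the bound on $\sup_{t\le T}\dbE[\phi(X_t)]$ just obtained.

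For the time-uniform bound \eqref{estimate:compact} the extra ingredient is the dissipativity condition \eqref{assum:diss}. Writing $\nabla\phi(x)\cdot b(x,m)=p(1+|x|^2)^{(p-2)/2}\,x\cdot\bigl(D_mF(m,x)+\tfrac{\sigma^2}{2}\nabla U(x)\bigr)$, I would use $x\cdot D_mF(m,x)\ge -C_F|x|$ (boundedness of $D_mF$) and $x\cdot\nabla U(x)\ge C_U|x|^2+C_U'$ (from \eqref{assum:diss}), then absorb the linear term by Young's inequality to get $x\cdot b(x,m)\ge \tfrac{\sigma^2C_U}{4}|x|^2-C$. Using the identity $(1+|x|^2)^{(p-2)/2}|x|^2=\phi(x)-(1+|x|^2)^{(p-2)/2}$, together with $\tfrac{\sigma^2}{2}|\Delta\phi(x)|\le C(1+|x|^2)^{(p-2)/2}$ and a further application of Young's inequality (valid since $p\ge 2$, so $(1+|x|^2)^{(p-2)/2}\le\eps\phi(x)+C_\eps$ for any $\eps>0$), one arrives at a differential inequality of the form $\tfrac{\mathrm{d}}{\mathrm{d}t}\dbE[\phi(X_t)]\le -\lambda\,\dbE[\phi(X_t)]+C$ with $\lambda>0$. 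Gronwall's lemma then yields $\dbE[\phi(X_t)]\le \dbE[\phi(X_0)]\vee(C/\lambda)$ uniformly in $t$, which is \eqref{estimate:compact}.

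Finally, if $m_0\in\cup_{p>2}\cP_p(\dbR^d)$, pick $p_0>2$ with $m_0\in\cP_{p_0}(\dbR^d)$; then \eqref{estimate:compact} gives $\sup_{t\in\dbR^+}\int_{\dbR^d}|x|^{p_0}m_t(\mathrm{d}x)\le C$, and since $p_0>2$ the compactness criterion recalled in item iii) of Section \ref{sec meas der} shows that $\{m_t:t\ge 0\}$ is contained in a $\cW_2$-compact subset of $\cP_2(\dbR^d)$.

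The only genuinely delicate point is the justification of taking expectations in It\^o's formula before the moments are known to be finite; this is handled in the standard way by first stopping at $\tau_n:=\inf\{t:|X_t|\ge n\}$, deriving the inequality for $\dbE[\phi(X_{t\wedge\tau_n})]$, applying Gronwall, and then letting $n\to\infty$ via Fatou's lemma. Everything else is a routine consequence of the linear growth of $b$ and, for the time-uniform estimate, of the dissipativity \eqref{assum:diss}.
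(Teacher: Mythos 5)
Your proof is correct and follows essentially the same strategy as the paper: It\^o's formula, boundedness of $D_mF$, the dissipativity \eqref{assum:diss} of $\nabla U$, localisation by stopping times, Fatou and Gronwall. Two technical differences are worth recording. For \eqref{estimate:supP} the paper exploits the fact that the diffusion coefficient is the constant $\sigma$, so the martingale part of $X$ is literally $\sigma W_t$; it bounds $|X_t|$ pathwise by $|X_0|+\int_0^t C(1+|X_s|)\,\mathrm{d}s+|\sigma W_t|$ and uses $\dbE[\sup_{t\le s}|\sigma W_t|^p]\le Cs^{p/2}$, so no BDG is needed. Your BDG route also works, but note that $|\nabla\phi(X_r)|^2$ is of order $(1+|X_r|^2)^{p-1}$, a moment of order $2p-2>p$, so the bound $\sup_{t\le T}\dbE[\phi(X_t)]\le C$ alone does not control the BDG term; you need the standard splitting
\[
\Big(\int_0^T(1+|X_r|^2)^{p-1}\,\mathrm{d}r\Big)^{1/2}\le \sup_{r\le T}\phi(X_r)^{1/2}\Big(\int_0^T(1+|X_r|^2)^{p/2-1}\,\mathrm{d}r\Big)^{1/2}
\]
followed by Young's inequality so that $\tfrac12\dbE[\sup_{t\le T}\phi(X_t)]$ can be absorbed into the left-hand side (legitimate after localisation). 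Your phrase ``absorb the resulting contribution'' should be expanded to make this explicit. For \eqref{estimate:compact} the paper multiplies by $e^{\eps t}$ and runs an induction on $p$ in steps of two (first $p=2$, then reducing $\dbE[|X_t|^p]$ to $\dbE[|X_t|^{p-2}]$), whereas your Young-inequality absorption of the lower-order term $(1+|x|^2)^{p/2-1}$ closes the differential inequality $\frac{\mathrm{d}}{\mathrm{d}t}\dbE[\phi(X_t)]\le-\lambda\dbE[\phi(X_t)]+C$ in one step; both are fine, and yours is arguably cleaner.
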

\begin{proof}
	Since $b$ is of linear growth, we have
	\beaa
	|X_t| \le |X_0| + \int_0^t C(1+ |X_t|) \mathrm{d}t + \left| \si W_t \right|.
	\eeaa
	Therefore, 
	\beaa
	\sup_{t\le s}|X_t|^p \le C\left( |X_0|^p + 1+\int_0^ s \sup_{t\le r}| X_t|^p \mathrm{d}r+ \sup_{t\le s}\left| \si W_t \right|^p  \right).
	\eeaa
	Note that $\dbE\left[ \sup_{t\le s}\left| \si W_t \right|^p\right] \le Cs^{p/2}$. Then \eqref{estimate:supP} follows from the Gronwall inequality. 
	
	For the second estimate, we apply the It\^o formula and obtain
	\beaa
	\mathrm{d} |X_t|^p = |X_t |^{p-2} \Big( -p  X_t\cd b(X_t, m_t) + \frac{p(p-1)}{2}\si^2 \Big)\mathrm{d}t  + p\si |X_t|^{p-2} X_t  \cd \mathrm{d}W_t.
	\eeaa
	Since $D_m F$ is bounded and $\nabla U(x)\cd x\ge C|x|^2 +C'$, we have
	\beaa
	\mathrm{d}|X_t|^p &\le& |X_t |^{p-2} \Big( C''|X_t| -  \frac{p\si^2}{2}(C|X_t|^2+C') + \frac{p(p-1)}{2}\si^2 \Big)\mathrm{d}t  + p\si |X_t|^{p-2} X_t  \cd \mathrm{d}W_t\\
	&\le &|X_t |^{p-2} \Big( C -  \e |X_t|^2 + \frac{p(p-1)}{2}\si^2 \Big)\mathrm{d}t  + p\si |X_t|^{p-2} X_t  \cd \mathrm{d}W_t, ~\mbox{for some  $0<\e< \frac{p\si^2C}{2}$.}
	\eeaa
	The last inequality is due to the Young inequality. Again by the It\^o formula we have
	\bea\label{eq:X^pestimate}
	\mathrm{d}\big(e^{\e t} |X_t|^p\big)  ~\le~ e^{\e t}\left( |X_t |^{p-2} \Big( C  + \frac{p(p-1)}{2}\si^2 \Big)\mathrm{d}t  + p\si |X_t|^{p-2} X_t  \cd \mathrm{d}W_t \right)
	\eea
	Further, define the stopping time $\t_m:=\inf\{t\ge 0: |X_t|\ge m\}$. By taking expectation on both sides of \eqref{eq:X^pestimate}, we have
	\bea\label{eq:itoLp}
	\dbE[e^{\e(\t_m\we t) }|X_{\t_m\we t}|^p] \le  \dbE[|X_0|^p] + \dbE \left[\int_0^{\t_m\we t} e^{\e s} |X_s |^{p-2} \Big( C + \frac{p(p-1)}{2}\si^2 \Big)\mathrm{d}s\right].
	\eea
	In the case $p=2$,  it follows from the Fatou lemma and the monotone convergence theorem that
	\beaa
	\dbE[|X_{ t}|^2] \le e^{-\e t}  \dbE[|X_0|^2]  +  \int_0^{ t} e^{\e (s-t)} \Big( C + \si^2 \Big)\mathrm{d}s \le C\left(e^{-\e t}+ \e^{-1}(1 -e^{-\e t})\right),
	\eeaa
	and thus $\sup_{t\in \dbR^+}\dbE[|X_{ t}|^2]<\infty$. For  $p> 2$, we again obtain from \eqref{eq:itoLp} that
	\beaa
	\dbE[|X_{t}|^p] \le  e^{-\e t}  \dbE[|X_0|^p] + \int_0^{t} e^{\e (s-t)} \dbE[ |X_s |^{p-2} ]\Big( C + \frac{p(p-1)}{2}\si^2 \Big)\mathrm{d}s.
	\eeaa
	Then \eqref{estimate:compact} follows from induction.
\end{proof}

\begin{prop}\label{prop:smoothsol}
	Let Assumption \ref{assum:U} and \ref{A1} hold true and assume $m_0\in \cP_p(\dbR^d)$ for some $p\ge 2$. The  marginal law $ m $ of the solution $X$ to \eqref{Langevin} is a weak solution to Fokker-Planck equation:
	\begin{equation}\label{Fokker-Planck}
		\partial_tm = \nabla\cdot\left(b(x,m)m + \frac{\sigma^2}{2}\nabla m\right),
	\end{equation}
	in the sense that for all $C^\infty$-function $\phi: \dbR^d\rightarrow \dbR$ such that $\phi, \nabla \phi, \nabla^2 \phi$ decay to $0$ at infinity,  we have
	\beaa
	\int_{\dbR^d} \phi(x) \big(m_t -m_s\big)(dx) =\int_s^t \int_{\dbR^d} \Big(- \nabla\phi(x) b(x, m_u) m_u(dx) + \frac{\si^2}{2} \D \phi(x) m_u(dx)\Big) du.
	\eeaa
	Moreover, the mapping $t\mapsto m_t$ is weakly continuous on $[0,+\infty)$, the joint density function $(t,x)\mapsto m(t,x)$ exists and   $m\in C^{1,\infty} \left((0,\infty)\times \dbR^d, \dbR \right)$.
	In particular, a stationary solution to the Fokker-Planck equation \eqref{Fokker-Planck} is an invariant measure to \eqref{Langevin}.
\end{prop}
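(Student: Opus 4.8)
The plan is to treat the statement in three independent pieces --- the weak Fokker--Planck identity, the $\cW_2$-continuity of $t\mapsto m_t$, and the existence and smoothness of the density --- and then to deduce the closing remark on stationary solutions.

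First I would derive the weak formulation by applying It\^o's formula to $\phi(X_t)$ for $\phi$ as in the statement: since $\nabla\phi$ is bounded, the drift $b(\cdot,m_\cdot)$ has linear growth (Assumptions~\ref{A1} and~\ref{assum:U}) and $X$ has finite moments (Lemma~\ref{lem:uniL2}), the local martingale $\int_0^\cdot\sigma\,\nabla\phi(X_u)\cdot\mathrm dW_u$ is a true martingale; taking expectations and writing $\dbE[\psi(X_u)]=\int_{\dbR^d}\psi\,m_u(\mathrm dx)$ produces exactly the claimed identity. In particular $m$ is a distributional solution of \eqref{Fokker-Planck} with $b(x,m_u)$ viewed as a prescribed coefficient. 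For the continuity of $t\mapsto m_t$ I would use $X_t-X_s=-\int_s^t b(X_u,m_u)\,\mathrm du+\sigma(W_t-W_s)$, the linear growth of $b$ and the moment bounds of Lemma~\ref{lem:uniL2} to obtain $\dbE|X_t-X_s|^2\le C(t-s)$, hence $\cW_2(m_t,m_s)^2\le C(t-s)$, so $t\mapsto m_t$ is locally H\"older in $\cW_2$ and a fortiori weakly continuous.

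The heart of the proof is the regularity of the density, and here the idea is to \emph{freeze} the measure argument. By strong uniqueness, $X$ is also the unique solution of the \emph{linear} SDE $\mathrm dY_t=-\tilde b(t,Y_t)\,\mathrm dt+\sigma\,\mathrm dW_t$ with $\tilde b(t,x):=b(x,m_t)$; by the $\cW_2$-continuity of $(m_t)$ and Assumptions~\ref{A1} and~\ref{assum:U}, $\tilde b$ is jointly continuous (H\"older in $t$), globally Lipschitz in $x$, and $C^\infty$ in $x$ with locally uniformly bounded derivatives, while the diffusion matrix is the constant non-degenerate $\sigma^2 I_d$ (with $\sigma>0$). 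Thus the Kolmogorov operator $\mathcal L_t=\tfrac{\sigma^2}{2}\Delta-\tilde b(t,x)\cdot\nabla$ is uniformly parabolic with spatially smooth coefficients, and classical interior parabolic theory --- a smooth fundamental solution in the sense of Friedman, or Schauder estimates together with a bootstrap, or alternatively a Malliavin calculus / H\"ormander argument --- gives that for $t>0$ the law $m_t$ admits a density, that $(t,x)\mapsto m(t,x)$ solves \eqref{Fokker-Planck} classically, and that $m\in C^{1,\infty}\big((0,\infty)\times\dbR^d\big)$; the distributional identity of the first step then identifies this classical solution with the law flow of $X$. I expect this to be the main obstacle: one must check that the limited time-regularity of $\tilde b$ (only H\"older, inherited from the $\cW_2$-continuity of $(m_t)$) and the merely \emph{local} control on the spatial derivatives of $D_mF$ still suffice to push the parabolic bootstrap all the way to $C^{1,\infty}$, and that the general initial datum $m_0\in\cP_p$ --- which need not possess a density --- is absorbed by the smoothing effect of the heat semigroup for positive times.

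Finally, for the concluding assertion, suppose $m^*$ is a stationary solution of \eqref{Fokker-Planck}, that is $\nabla\cdot\big(b(x,m^*)m^*+\tfrac{\sigma^2}{2}\nabla m^*\big)=0$. Freezing the measure at $m^*$ yields the time-homogeneous SDE $\mathrm dX_t=-b(X_t,m^*)\,\mathrm dt+\sigma\,\mathrm dW_t$; its drift is dissipative by Assumption~\ref{assum:U}, so it admits a Lyapunov function and hence a unique invariant probability measure, which the stationary Fokker--Planck solution $m^*$ must be. Therefore, started from $X_0\sim m^*$, one has $\mathrm{Law}(X_t)=m^*$ for every $t$, so this $X$ solves the McKean--Vlasov equation \eqref{Langevin} with $m_0=m^*$; by uniqueness (Proposition~\ref{prop:wellpose}) its marginal flow is the flow in the statement and it is constantly equal to $m^*$, i.e.\ $m^*$ is an invariant measure of \eqref{Langevin}.
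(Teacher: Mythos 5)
Your proposal is correct and follows essentially the same route as the paper: It\^o's formula applied to $\phi(X_t)$ for the weak formulation, freezing the measure argument to view $m$ as a weak solution of a linear uniformly parabolic equation with coefficient $\tilde b(t,x)=b(x,m_t)$ and invoking classical interior regularity (the paper cites $L^p_{loc}$-estimates from Jordan--Kinderlehrer--Otto and Ladyzenskaja--Solonnikov--Ural'ceva where you cite Schauder/fundamental-solution theory), and solving the SDE with the measure frozen at a stationary solution $m^*$ to conclude it is invariant for \eqref{Langevin}. The only cosmetic difference is in the last step, where you go through uniqueness of the invariant measure of the frozen SDE via a Lyapunov function, while the paper directly identifies the constant flow $t\mapsto m^*$ with the marginal law of the frozen SDE started from $m^*$; both rest on the same uniqueness for the linearized Fokker--Planck equation.
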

\begin{proof}
	By applying the It\^o formula on $\phi(t,X_t)$, we can verify that $m$ is a weak solution to \eqref{Fokker-Planck}. Next, define $\tilde b(x,t):= b(x, m_t)$. Obviously, $m$ can be regarded as a weak solution to the linear PDE:
	\bea\label{eq:linearPDE}
	\partial_tm = \nabla\cdot\left(\tilde b m + \frac{\sigma^2}{2}\nabla m\right).
	\eea
	Then the regularity result follows from a standard argument through $L^p_{loc}$-estimate. For details, we refer the readers to the seminal paper \cite[p.14-p.15]{JKO98} or the classic book \cite[Chapter IV]{LSU68}. 
	
	Let $m^*$ be a stationary solution to \eqref{Fokker-Planck}, and $X$ be the strong solution to the SDE:
	\beaa
	dX_t = -b(X_t, m^*)dt  + \si dW_t.
	\eeaa
	It is easy to verify that given ${\rm Law}(X_0) = m^*$ we have ${\rm Law}(X_t) = m^*$ for all $t\ge 0$. Therefore $X$ is the solution to mean-field Langevin equation \eqref{Langevin} and $m^*$ is an invariant measure. 
\end{proof}


\section{Convergence to the Invariant Measure}
\label{sec conv}

\no Now we are going to show that under mild conditions, the flow of marginal law $(m_t)_{t\in \dbR^+}$ converges toward the invariant measure which coincides with the minimizer of $V^\si$.

\begin{lem}\label{finiteEntropy}
	Suppose Assumption \ref{assum:U} and \ref{A1} hold true and $m_0\in \cP_2(\dbR^d)$. Let $ m $ be the law of  the solution to the mean field Langevin equation \eqref{Langevin}. Denote by $ \dbP_{\sigma,w} $ the scaled Wiener measure\footnote{Under the scaled Wiener measure $ \dbP_{\sigma,w} $, if we denote $ X $ as the canonical process, $ \frac{X}{\sigma} $ is a standard Brownian motion.} with initial distribution $ m_0 $. Then,
	\begin{enumerate}[i)] 
		\item For any $T>0$, $ \dbP_{\sigma,w} $ is equivalent to $ m $ on $\cF_T$, where $\{\cF_t\}$ is the filtration generated by $X$, and the relative entropy 
		\bea\label{eq:r.e.wiener}
		\dbE^{m}\left[\log \Big(\frac{\mathrm{d}m}{\mathrm{d}\dbP_{\sigma,w}}\Big|_{\cF_T} \Big)\right]<\infty.
		\eea
		
		\item  For all $ t>0$,  the marginal law $m_t$ admits density such that $m_t>0$ and $H(m_t)<\infty$.
	\end{enumerate}
\end{lem}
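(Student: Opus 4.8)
The plan is to get (i) from a Girsanov change of measure and then deduce (ii) from (i) together with Proposition \ref{prop:smoothsol} and the data-processing inequality for relative entropy.

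\textbf{Part (i).} Since $m_\cdot$ is the already-constructed weakly continuous flow of marginal laws, \eqref{Langevin} can be read as a time-inhomogeneous SDE with drift $-b(x,m_t)=-\big(D_mF(m_t,x)+\frac{\sigma^2}{2}\nabla U(x)\big)$, which by Assumptions \ref{A1} and \ref{assum:U} is continuous and of linear growth, $|b(x,m_t)|\le C(1+|x|)$. Under $\dbP_{\sigma,w}$ the canonical process is $X_0+\sigma\beta$ with $\beta$ a standard Brownian motion and $X_0\sim m_0$, so Girsanov's theorem suggests
\[
\frac{\mathrm{d}m}{\mathrm{d}\dbP_{\sigma,w}}\Big|_{\cF_T}=\mathcal E_T:=\exp\Big(-\frac1\sigma\int_0^T b(X_s,m_s)\cdot\mathrm{d}\beta_s-\frac1{2\sigma^2}\int_0^T|b(X_s,m_s)|^2\,\mathrm{d}s\Big).
\]
To make this rigorous I would localise with $\tau_n:=\inf\{t\ge 0:|X_t|\ge n\}$: the drift is bounded on $[0,\tau_n]$, so $\mathcal E_{\cdot\wedge\tau_n}$ is a true $\dbP_{\sigma,w}$-martingale defining a probability measure $Q_n$ that coincides with $m$ on $\cF_{T\wedge\tau_n}$ by pathwise uniqueness of the frozen SDE; since $\mathcal E$ is a nonnegative supermartingale and $\dbE^{\dbP_{\sigma,w}}[\mathcal E_T]\ge\lim_n Q_n(\tau_n>T)=\lim_n m(\sup_{t\le T}|X_t|<n)=1$ by \eqref{estimate:supP} of Lemma \ref{lem:uniL2}, it is in fact a true martingale, hence the Radon--Nikodym derivative, and $m\sim\dbP_{\sigma,w}$ on $\cF_T$. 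Substituting $\mathrm{d}\beta_s=\mathrm{d}W_s-\frac1\sigma b(X_s,m_s)\,\mathrm{d}s$ ($m$-a.s., with $W$ the driving Brownian motion) turns $\log\mathcal E_T$ into $-\frac1\sigma\int_0^T b(X_s,m_s)\cdot\mathrm{d}W_s+\frac1{2\sigma^2}\int_0^T|b(X_s,m_s)|^2\,\mathrm{d}s$, and since $\dbE^m\big[\int_0^T|b(X_s,m_s)|^2\,\mathrm{d}s\big]\le CT\big(1+\sup_{s\le T}\dbE^m[|X_s|^2]\big)<\infty$ by Lemma \ref{lem:uniL2}, the stochastic integral has zero mean, so
\[
\dbE^m\Big[\log\frac{\mathrm{d}m}{\mathrm{d}\dbP_{\sigma,w}}\Big|_{\cF_T}\Big]=\frac1{2\sigma^2}\dbE^m\Big[\int_0^T|b(X_s,m_s)|^2\,\mathrm{d}s\Big]<\infty,
\]
which is \eqref{eq:r.e.wiener}.

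\textbf{Part (ii).} Proposition \ref{prop:smoothsol} already gives that $m_t$ has a (smooth) density for $t>0$, so it remains to show $m_t>0$ and $H(m_t)<\infty$. Write $\gamma_t:=m_0\ast\cN(0,\sigma^2 tI)$ for the time-$t$ marginal of $\dbP_{\sigma,w}$; its density $\gamma_t(x)=\int_{\dbR^d}(2\pi\sigma^2 t)^{-d/2}e^{-|x-y|^2/(2\sigma^2 t)}m_0(\mathrm{d}y)$ is strictly positive on $\dbR^d$. Pushing the equivalence $m\sim\dbP_{\sigma,w}$ on $\cF_t$ forward by the $\cF_t$-measurable map $X_t$ yields $m_t\sim\gamma_t$, hence $m_t>0$; and since relative entropy does not increase under a pushforward, $H(m_t\,|\,\gamma_t)\le\dbE^m[\log(\mathrm{d}m/\mathrm{d}\dbP_{\sigma,w}|_{\cF_t})]<\infty$ by part (i). Then decompose
\[
H(m_t)=H(m_t\,|\,\gamma_t)+\dbE^{m_t}\big[\log\gamma_t(X_t)+U(X_t)\big];
\]
bounding $\log\gamma_t(x)\le-\frac d2\log(2\pi\sigma^2 t)$ from above, $\log\gamma_t(x)\ge -|x|^2/(\sigma^2 t)+c(t)$ from below (using $|x-y|^2\le 2|x|^2+2|y|^2$ and $\int e^{-|y|^2/(\sigma^2 t)}m_0(\mathrm{d}y)>0$), and $-C\le U(x)\le C(1+|x|^2)$, the second term is finite because $\sup_{t}\dbE^{m_t}[|X_t|^2]<\infty$ by \eqref{estimate:compact} (or \eqref{estimate:supP} on each bounded interval). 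Hence $H(m_t)<\infty$.

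\textbf{Main obstacle.} The delicate step is the rigorous Girsanov argument in (i): because $|b|^2$ has quadratic growth, Novikov's criterion is not readily available, so one is forced into the localisation/uniform-moments route above, for which the estimates of Lemma \ref{lem:uniL2} are exactly what is needed. Once (i) is in place, (ii) is essentially bookkeeping with the data-processing inequality and Gaussian tail estimates.
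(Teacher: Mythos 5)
Your proposal is correct and follows the same overall strategy as the paper: Girsanov's theorem to identify the density $\mathrm{d}m/\mathrm{d}\dbP_{\si,w}$ and compute the path-space relative entropy as $\tfrac{1}{2\si^2}\dbE^m\big[\int_0^T|b(X_s,m_s)|^2\,\mathrm{d}s\big]$ (finite by Lemma \ref{lem:uniL2}), then the data-processing/conditional-Jensen inequality to pass to the marginals, the Gaussian upper bound on the heat-kernel density, and the quadratic growth of $U$ together with the second-moment bounds to conclude $H(m_t)<\infty$. The one place where you genuinely diverge from the paper is the justification that the stochastic exponential is a true martingale (equivalently $\dbE[\mathcal E_T]=1$): the paper's Appendix Lemma \ref{measureEquivalent} uses a Bene\v{s}-type argument, applying It\^o's formula to the regularised quantities $\rho_t|X_t|^2/(1+\eps\rho_t|X_t|^2)$ and $\rho_t/(1+\eps\rho_t)$, a Gr\"onwall bound, and dominated convergence as $\eps\to 0$; you instead localise with the exit times $\tau_n$, use uniqueness in law of the (frozen-flow, Lipschitz-drift) SDE to identify $\dbE^{\dbP_{\si,w}}[\mathcal E_T\1_{\{\tau_n>T\}}]=m(\tau_n>T)$, and let $n\to\infty$ using \eqref{estimate:supP}. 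Both are standard and valid here; your route leans on the moment estimate under the target law $m$ (already available from Lemma \ref{lem:uniL2}), whereas the paper's is self-contained on the Wiener-measure side and does not invoke uniqueness in law of the stopped equation. The remaining steps, including the observation that the lower bound on $\log\gamma_t$ is only needed to rule out an $\infty-\infty$ cancellation, match the paper's argument.
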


\begin{proof}
	
	\no{\it i)} \q We shall prove in the Appendix in Lemma \ref{measureEquivalent} that due to the linear growth in $x$ of the drift $b$,  $ \dbP_{\sigma,w} $ is equivalent to $ m $. Also by the linear growth of coefficient, we have
	\beaa
	\dbE^{m}\left[\log \Big(\frac{\mathrm{d}m}{\mathrm{d}\dbP_{\sigma,w}}\Big|_{\cF_T} \Big)\right] 
	= \dbE^{m}\left[ \frac{1}{\sigma^2}\int_0^T |b(X_t, m_t)|^2  \mathrm{d}t\right]
	\le C\dbE^{m}\left[ 1+ \sup_{t\le T}|X_t|^2\right]<\infty.
	\eeaa
	The last inequality is due to Lemma \ref{lem:uniL2}.
	
	\no{\it ii) }\q Since $ \dbP_{\sigma,w} $ is equivalent to $ m $, we have $ m_t>0 $. Denote $ f_{\sigma,t} $ the density function of the marginal law of a standard Brownian motion multiplied by $ \sigma $ with initial distribution $ m_0 $. It follows from the conditional Jensen inequality that for all $t\in [0,T]$
	\bea\label{eq:pushforward}
	\int_{\mathbb{R}^{d}}m_t\log\left(\frac{m_t(x)}{f_{\sigma,t}(x)}\right)\mathrm{d}x \leq \dbE^{m}\left[\log \Big(\frac{\mathrm{d}m}{\mathrm{d}\dbP_{\sigma,w}}\Big|_{\cF_T} \Big)\right] < +\infty. 
	\eea
	Further, by the fact $f_{\sigma,t} (x) \le \frac{1}{(2\pi t)^{d/2}\sigma}$, we have
	\beaa
	\int_{\mathbb{R}^{d}}m_t(x)\log(f_{\sigma,t}(x))\mathrm{d}x \le -\frac{d}{2} \log(2\pi t \sigma^2).
	\eeaa
	Finally, note that
	\beaa
	- \int_{\mathbb{R}^{d}}m_t(x)\log\big(g(x)\big)\mathrm{d}x = \int_{\mathbb{R}^{d}}m_t(x)U(x)\mathrm{d}x 
	\le C \int_{\mathbb{R}^{d}}m_t(x)|x|^2\mathrm{d}x <\infty.
	\eeaa
	Together with \eqref{eq:pushforward}, we have $H(m_t )<\infty$.
\end{proof}


\no Next, we introduce an interesting result of \cite[Theorem 3.10 and Remark 4.13]{Follmer}.
\begin{lem}\label{Fisher}
	Let $ m $ be a measure equivalent to the scaled Wiener measure $\dbP_{\si,w}$ such that the relative entropy is finite as in \eqref{eq:r.e.wiener}.  Then,
	\begin{enumerate}[i)]
		\item	for any $ 0<t<T $ we have $\int_{t}^{T}\int_{\mathbb{R}^{d}}\left|\nabla \log (m_s)\right|^2 m_s\mathrm{d}x\mathrm{d}s<+\infty$.
		\item   given $t \ge t_0>0$ such that the Dol\'eans-Dade exponential $\cE^b(X):= e^{-\int_{t-t_0}^{t} \frac{b_s}{\si^2} \mathrm{d}X_s -\int_{t-t_0}^{t} \frac12 |\frac{b_s}{\si}|^2 ds}$ \footnote{Again, we slightly abuse the notation, using $X$ to denote the canonical process of the Wiener space.} is conditionally  $\dbL^2$-differentiable on the interval $[t-t_0, t]$\footnote{Denote by $\dbP^{t-t_0, x_0}_{\si,w}$  the conditional probability of $\dbP_{\si,w}$ given $X_{t-t_0}=x_0$.  $\cE^b(X)$ is conditionally  $\dbL^2$-differentiable on the interval $[t-t_0, t]$, if there exists an absolutely continuous process  $D\cE^b:=\int_{t-t_0}^\cd D\cE^b_s ds$ with $D\cE^b_s\in \dbL^2(\dbP^{t-t_0, x_0}_{\si,w})$  for all $x_0\in \dbR^d$ such that for any $ h := \int_{t-t_0}^{\cdot}\dot{h}_s\mathrm{d}s $ with bounded predictable $ \dot{h} $, we have
			\bea\label{EbL2diff}
			\lim_{\e\rightarrow 0} \Big| \frac{\cE^b(X+\e h) -\cE^b(X)}{\e} - \langle D\cE^b(X), h \rangle\Big| =0, \q\mbox{in $\dbL^2(\dbP^{t-t_0, x_0}_{\si,w})$ for all $x_0\in \dbR^d$,}
			\eea
			where   $
			\langle D\cE^b(X), h \rangle = \int_{t-t_0}^{t} \dot{h}_{s} D\cE^b_s(X) d s$.},
		we have	
		\bea\label{eq:explicitefisher}
		\nabla \log\big(m_t(x)\big) = -\frac{1}{t_0} \dbE\Big[\int_{0}^{t_0} \big(1+ s \nabla b(X_{t-t_0+s}, m_{t-t_0+s})\big) \mathrm{d}W^{t-t_0}_s\Big| X_t =x\Big],
		\eea
	\end{enumerate}
	where $ W^{t-t_0}_s := W_{t-t_0+s} - W_{t-t_0}$ and $W$ is the Brownian motion in \eqref{Langevin}.
	
\end{lem}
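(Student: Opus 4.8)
The plan is to deduce both statements from F\"ollmer's results on time reversal on Wiener space, \cite[Theorem 3.10 and Remark 4.13]{Follmer}. The single standing hypothesis there --- that $m$ be equivalent to the scaled Wiener measure $\dbP_{\si,w}$ with finite relative entropy $\dbE^m\big[\log(\mathrm{d}m/\mathrm{d}\dbP_{\si,w}|_{\cF_T})\big]<\infty$ --- is exactly what is assumed in the present statement (and, in the application, is supplied by Lemma \ref{finiteEntropy}). So the task reduces to translating F\"ollmer's conclusions into the notation of \eqref{Langevin} and recalling, at the level of a sketch, why they hold; I would present the argument as follows.

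For part (i), by Girsanov the canonical process solves $\mathrm{d}X_t=-b(X_t,m_t)\,\mathrm{d}t+\si\,\mathrm{d}W_t$ under $m$ and $H(m|\dbP_{\si,w})=\tfrac{1}{2\si^2}\dbE^m\big[\int_0^T|b(X_t,m_t)|^2\,\mathrm{d}t\big]<\infty$. The reversal map $\omega\mapsto\omega(T-\cdot)$ is a bijection of $C([0,T];\dbR^d)$ preserving relative entropy, so the law $\hat m$ of $(X_{T-t})_t$ is equivalent to the reversed Wiener measure $\hat\dbP_{\si,w}$ with the same finite relative entropy; the latter is the law of the Gaussian diffusion with drift $\si^2\nabla\log q_{T-\cdot}$, where $q_s$ denotes the marginal density of $\si$-scaled Brownian motion started from $m_0$. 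F\"ollmer's theorem identifies $\hat m$ as a diffusion with drift $b_{T-\cdot}(\cdot)+\si^2\nabla\log m_{T-\cdot}(\cdot)$; applying Girsanov once more between $\hat m$ and $\hat\dbP_{\si,w}$ and changing variables back, the finiteness of $H(\hat m|\hat\dbP_{\si,w})$ becomes $\int_0^T\int_{\dbR^d}|b_r+\si^2\nabla\log m_r-\si^2\nabla\log q_r|^2 m_r\,\mathrm{d}x\,\mathrm{d}r<\infty$. On $[t,T]$ with $t>0$ the term with $|b_r|^2$ is controlled by the forward entropy integral and the term with $|\nabla\log q_r|^2$ is finite because $\nabla\log q_r$ has at most linear growth with a constant of order $1/(\si^2 t)$ while the second moments $\int|x|^2 m_r(\mathrm{d}x)$ are bounded uniformly in $r$ (by Lemma \ref{lem:uniL2}, or by an entropy--moment inequality); the triangle inequality then yields $\int_t^T\int_{\dbR^d}|\nabla\log m_r|^2 m_r\,\mathrm{d}x\,\mathrm{d}r<\infty$. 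The restriction to $t>0$ is genuine, since $\nabla\log m_s$ need not even be defined as $s\downarrow0$.

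For part (ii), fix $t>t_0>0$ and condition on $X_{t-t_0}=x_0$. On $[t-t_0,t]$ the canonical process is $\si$-scaled Brownian motion under $\dbP^{t-t_0,x_0}_{\si,w}$, and $\cE^b(X)$ is precisely the density of the conditioned $m$ with respect to it; hence the conditional density of $X_t$ given $X_{t-t_0}=x_0$ under $m$ equals the Gaussian transition density times $\dbE^{\dbP^{t-t_0,x_0}_{\si,w}}[\cE^b(X)\mid X_t=\cdot]$, and $m_t(x)$ is recovered by integrating against $m_{t-t_0}$. To differentiate $\log m_t$ in $x$ one realises the perturbation of the endpoint $X_t$ by a Cameron--Martin shift of the path on $[t-t_0,t]$ and integrates by parts: the Gaussian factor contributes the Bismut--Elworthy--Li term (the ``$1$'' inside the stochastic integral in \eqref{eq:explicitefisher}), while $\cE^b$ contributes $\langle D\cE^b(X),h\rangle$, which upon differentiating the exponential, substituting $\mathrm{d}X_s=-b\,\mathrm{d}s+\si\,\mathrm{d}W_s$ under $m$, and integrating the resulting time-weight produces the term $s\,\nabla b(X_{t-t_0+s},m_{t-t_0+s})$; conditioning on $X_t=x$ gives \eqref{eq:explicitefisher}. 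The hypothesis that $\cE^b$ is conditionally $\dbL^2$-differentiable on $[t-t_0,t]$ is exactly what legitimises this integration by parts, and the resulting formula is the content of \cite[Remark 4.13]{Follmer}.

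I expect the main obstacle to be bookkeeping rather than conceptual: reconciling sign and scaling conventions with \cite{Follmer} (the reversed-drift and reversal formulas), checking that the growth and moment bounds needed in part (i) hold uniformly away from $s=0$, and, in part (ii), rigorously justifying the conditional Malliavin integration by parts and the explicit identification of $D\cE^b$ --- which is precisely the technical core that F\"ollmer's Theorem 3.10 and Remark 4.13 package for us once the finite-entropy equivalence of Lemma \ref{finiteEntropy} is in hand.
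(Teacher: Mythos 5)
The paper offers no proof of this lemma: it is stated verbatim as a quoted result of F\"ollmer \cite[Theorem 3.10 and Remark 4.13]{Follmer}, with the only paper-supplied ingredients being the finite-entropy equivalence (Lemma \ref{finiteEntropy}) and the conditional $\dbL^2$-differentiability of $\cE^b$ (Lemma \ref{lem:L2diff} in the Appendix). Your proposal reduces to exactly the same citation and correctly identifies those two hypotheses as the interface, so it takes essentially the same approach; the added sketch of F\"ollmer's time-reversal and integration-by-parts arguments is a reasonable account of what that reference contains.
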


\no We shall prove in the Appendix, Lemma \ref{lem:L2diff}, that under Assumption \ref{assum:U} and \ref{A1}, $\cE^b$ is conditionally  $\dbL^2$-differentiable on $[t-t_0, t]$ for all $t\ge t_0>0$.

The estimate (i) leads to some other integrability results.

\begin{lem}
	Suppose Assumption \ref{assum:U} and \ref{A1} hold true and $m_0\in \cP_2(\dbR^d)$. We have
	\beaa
	\int_{t}^{T}\int_{\mathbb{R}^{d}} |\nabla m_t(x) |\mathrm{d}x\mathrm{d}t<\infty
	\q\mbox{and}\q
	\int_{t}^{T}\int_{\mathbb{R}^{d}} |x\cd \nabla m_t(x) |\mathrm{d}x\mathrm{d}t<\infty.
	\eeaa
\end{lem}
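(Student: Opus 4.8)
The plan is to reduce everything to the integrated relative Fisher information, which Lemma~\ref{Fisher}(i) already controls. Write $t_0$ for the fixed lower time endpoint appearing in the statement (the displayed formula uses the letter $t$ both for the endpoint and for the integration variable; I read it as $\int_{t_0}^T\int_{\dbR^d}\cdots\,\mathrm{d}x\,\mathrm{d}s$ for $0<t_0<T$). On each slice $s>0$ the density $m_s$ exists, is strictly positive and smooth by Proposition~\ref{prop:smoothsol} together with Lemma~\ref{finiteEntropy}(ii), so I may write $\nabla m_s = m_s\,\nabla\log m_s$, whence $|\nabla m_s(x)| = |\nabla\log m_s(x)|\,m_s(x)$ and $|x\cdot\nabla m_s(x)|\le |x|\,|\nabla\log m_s(x)|\,m_s(x)$.

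For the first integral I would apply Cauchy--Schwarz in $x$, using $\int_{\dbR^d} m_s\,\mathrm{d}x=1$, to get $\int_{\dbR^d}|\nabla m_s|\,\mathrm{d}x\le \big(\int_{\dbR^d}|\nabla\log m_s|^2 m_s\,\mathrm{d}x\big)^{1/2}$, and then Cauchy--Schwarz in $s$ over the finite interval $[t_0,T]$ to obtain
\[
\int_{t_0}^T\int_{\dbR^d}|\nabla m_s(x)|\,\mathrm{d}x\,\mathrm{d}s \;\le\; (T-t_0)^{1/2}\Big(\int_{t_0}^T\int_{\dbR^d}|\nabla\log m_s|^2 m_s\,\mathrm{d}x\,\mathrm{d}s\Big)^{1/2}.
\]
The right-hand side is finite by Lemma~\ref{Fisher}(i), whose hypothesis --- finiteness of the relative entropy of $m$ with respect to the scaled Wiener measure in~\eqref{eq:r.e.wiener} --- is exactly what Lemma~\ref{finiteEntropy}(i) establishes under Assumptions~\ref{assum:U} and~\ref{A1}.

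For the second integral the same computation carries an extra factor $\big(\int_{\dbR^d}|x|^2 m_s\,\mathrm{d}x\big)^{1/2}=\dbE[|X_s|^2]^{1/2}$ from the Cauchy--Schwarz step; this is bounded uniformly on $[0,T]$ by Lemma~\ref{lem:uniL2} (estimate~\eqref{estimate:supP}, or~\eqref{estimate:compact} for a time-uniform bound), so I would pull $\sup_{s\le T}\dbE[|X_s|^2]^{1/2}$ out of the integral and conclude exactly as before. I do not expect a genuine obstacle here: the result is essentially bookkeeping on top of Lemmas~\ref{finiteEntropy}, \ref{Fisher} and~\ref{lem:uniL2}. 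The only points needing a little care are the justification of $\nabla m_s=m_s\nabla\log m_s$ (positivity and $C^{1,\infty}$-regularity of $m$, available from Proposition~\ref{prop:smoothsol}) and the observation that all constants are permitted to blow up as $t_0\downarrow 0$, which is harmless since the claim is only for fixed $0<t_0<T$.
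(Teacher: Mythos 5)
Your proof is correct and follows essentially the same route as the paper: both arguments reduce the claim to the integrated relative Fisher information from Lemma~\ref{Fisher}(i) (with its hypothesis supplied by Lemma~\ref{finiteEntropy}(i)) together with the second-moment bound from Lemma~\ref{lem:uniL2}. The only cosmetic difference is that you use Cauchy--Schwarz where the paper uses Young's inequality pointwise ($|\nabla m_s|\le m_s+|\nabla m_s/m_s|^2 m_s$ and $|x\cdot\nabla m_s|\le |x|^2 m_s+|\nabla m_s/m_s|^2 m_s$), which is an equivalent piece of bookkeeping.
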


\begin{proof}
	By the Young inequality, we have
	\begin{equation*}
		|\nabla m_t| \leq m_t + \left|\frac{\nabla m_t}{m_t}\right|^2m_t\q
		\mbox{and}\q
		|x\cdot \nabla m_t| \leq x^2m_t + \left|\frac{\nabla m_t}{m_t}\right|^2m_t.
	\end{equation*}
	Since all terms on the right hand sides are integrable, due to Lemma \ref{Fisher},  so are $ \nabla m $ and $x\cd\nabla m$.
\end{proof}

\no Based on the previous integrability results, the next lemma follows from  the integration by part. 
\begin{lem}\label{green's formula}
	Let $m_0\in \cP_2(\dbR^d)$. Under Assumption \ref{assum:U} and \ref{A1}  we have   for {\it Leb}-a.s. $t$ that
	\beaa
	&\int_{\mathbb{R}^{d}}\Tr(\nabla D_mF(m_t,x)) m_t\mathrm{d}x = -\int_{\mathbb{R}^{d}}D_mF(m_t,x)\cd\nabla m_t\mathrm{d}x,&\\
	&
	\mbox{and}\q 
	\int_{\mathbb{R}^{d}} \D U(x) m_t(x)\mathrm{d}x =- \int_{\mathbb{R}^{d}} \nabla U(x)\cdot\nabla m_t(x)\mathrm{d}x.&
	\eeaa
\end{lem}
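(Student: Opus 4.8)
The plan is to derive both identities from the classical Green formula on balls $B_R\subset\dbR^d$ followed by a limiting argument $R\to\infty$; the content of the proof lies entirely in discarding the boundary contributions. First I would fix, once and for all, a time $t$ in the full-measure subset of $(0,\infty)$ on which $\int_{\dbR^d}|\nabla m_t(x)|\,\mathrm dx<\infty$ \emph{and} $\int_{\dbR^d}|x|\,|\nabla m_t(x)|\,\mathrm dx<\infty$. That such $t$ form a set of full measure is exactly what the preceding lemma provides: its first bound gives $\int|\nabla m_t|<\infty$ for a.e.\ $t$, while the very Young-inequality estimate used in its proof, $|x|\,|\nabla m_t|\le |x|^2 m_t + |\nabla m_t|^2/m_t$, together with $\sup_{t}\dbE|X_t|^2<\infty$ (Lemma~\ref{lem:uniL2}) and Lemma~\ref{Fisher}(i), gives $\int|x|\,|\nabla m_t|<\infty$ for a.e.\ $t$ too; this is precisely why the statement is asserted only for \textit{Leb}-a.s.\ $t$. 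I would also record the ingredients to be used: $x\mapsto m_t(x)$ is $C^\infty$ (Proposition~\ref{prop:smoothsol}); $x\mapsto D_mF(m_t,x)$ is $C^\infty$, bounded by $C_F$, and $C_F$-Lipschitz in $x$, hence $|\nabla D_mF(m_t,\cdot)|\le C_F$ (Assumption~\ref{A1}); and $U\in C^\infty$ with $|\nabla U(x)|\le C(1+|x|)$, $|\D U|\le C$ (Assumption~\ref{assum:U} and the bounds recorded after it).

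Next, pick a cutoff $\chi_R\in C_c^\infty(\dbR^d)$ with $\mathbf 1_{B_R}\le\chi_R\le\mathbf 1_{B_{2R}}$ and $|\nabla\chi_R|\le C/R$. Since all factors are smooth and $\chi_R$ has compact support, integration by parts gives, for the first identity,
\[
\int_{\dbR^d}\Tr\big(\nabla D_mF(m_t,x)\big)\,m_t\,\chi_R\,\mathrm dx
= -\int_{\dbR^d}D_mF(m_t,x)\cdot\nabla m_t\,\chi_R\,\mathrm dx
  -\int_{\dbR^d}D_mF(m_t,x)\cdot\nabla\chi_R\,m_t\,\mathrm dx,
\]
and, for the second,
\[
\int_{\dbR^d}\D U\,m_t\,\chi_R\,\mathrm dx
= -\int_{\dbR^d}\nabla U\cdot\nabla m_t\,\chi_R\,\mathrm dx
  -\int_{\dbR^d}\nabla U\cdot\nabla\chi_R\,m_t\,\mathrm dx.
\]
In both cases the last (boundary) integrand is supported in $B_{2R}\setminus B_R$ and bounded there by $(C_F\,C/R)\,m_t$, resp.\ by $(C/R)(1+|x|)\,m_t\le 3C\,m_t$ for $R\ge1$, so its absolute value integrates to at most $C\,m_t(B_{2R}\setminus B_R)\to0$; meanwhile the remaining two integrands are dominated, uniformly in $R$, by $d\,C_F\,m_t$ and $C_F\,|\nabla m_t|$ (first identity), resp.\ $C\,m_t$ and $C(1+|x|)\,|\nabla m_t|$ (second identity), all of which are in $L^1$ for the chosen $t$. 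Dominated convergence as $R\to\infty$ then yields the two claimed equalities.

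The only genuine obstacle is the control of the boundary layer $B_{2R}\setminus B_R$: the boundedness of $D_mF$ and the at-most-linear growth of $\nabla U$ are exactly what is needed to absorb the $O(1/R)$ decay of $\nabla\chi_R$ against the vanishing mass $m_t(B_{2R}\setminus B_R)$, while the $L^1$-integrability of $\nabla m_t$ and of $|x|\,|\nabla m_t|$ supplied by the preceding lemma is what makes the dominated convergence in the interior terms legitimate — hence also the necessity of restricting to \textit{Leb}-a.s.\ $t$. Everything else is routine smoothness and growth bookkeeping.
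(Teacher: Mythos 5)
Your proof is correct and follows exactly the route the paper indicates: the paper states only that the lemma ``follows from integration by parts'' given the integrability results of the preceding lemma, and your cutoff argument with the $O(1/R)$ boundary terms and dominated convergence is precisely the detailed version of that claim. The observation that the a.s.-in-$t$ restriction comes from the $L^1$-integrability of $\nabla m_t$ and $|x|\,|\nabla m_t|$ is also the intended one.
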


Again using the estimate (i) in Lemma \ref{Fisher}, together with Theorem 2.1 of Haussmann and Pardoux \cite{HP86}, we directly obtain the following result concerning the time reverse process $\widetilde X_t : = X_{T-t}$ for a given $T>0$ and $t\le T$.

\begin{lem}
	Under Assumption \ref{assum:U} and \ref{A1} , there exists a Brownian motion $\widetilde W_t$ such that $(\widetilde X, \widetilde W)$ is a weak solution to the SDE:
	\beaa
	d\widetilde X_t = \left( b(\widetilde X_t, m_{T-t}) + \si^2 \nabla \log m_{T-t} (\widetilde X_t) \right) dt +\si d \widetilde W_t.
	\eeaa
\end{lem}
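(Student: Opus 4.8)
The plan is to recognise the forward equation~\eqref{Langevin} as a time-inhomogeneous It\^o diffusion with \emph{constant} (hence, for $\si>0$, non-degenerate) diffusion coefficient $\si$, and then to invoke the time-reversal theorem of Haussmann and Pardoux~\cite[Theorem 2.1]{HP86}; all of its hypotheses have in fact already been established in the preceding lemmas. Concretely, \eqref{Langevin} reads $\mathrm dX_t=-b(X_t,m_t)\,\mathrm dt+\si\,\mathrm dW_t$ with diffusion matrix $a:=\si^2 I_d$ (if $\si=0$ there is no density and the statement is empty). For $\widetilde X_t:=X_{T-t}$ the Haussmann--Pardoux formula produces the reversed drift
\[
b(x,m_{T-t})+\frac{1}{m_{T-t}(x)}\,\nabla\cdot\big(\si^2\, m_{T-t}\big)(x)=b(x,m_{T-t})+\si^2\,\frac{\nabla m_{T-t}(x)}{m_{T-t}(x)},
\]
which is exactly the drift in the claimed SDE since $\nabla m/m=\nabla\log m$, while the diffusion coefficient is unchanged, equal to $\si$. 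So the whole content of the lemma is the applicability of \cite[Theorem 2.1]{HP86}.

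It remains to check those hypotheses. The measurability/continuity requirements on the coefficients hold because $D_mF$ is Lipschitz (Assumption~\ref{A1}) and $U$ is smooth with Lipschitz gradient (Assumption~\ref{assum:U}); combined with the weak continuity of $t\mapsto m_t$ from Proposition~\ref{prop:smoothsol}, this makes $(t,x)\mapsto b(x,m_t)$ continuous on $(0,T]\times\dbR^d$ and of linear growth in $x$. For every $t>0$ the marginal $m_t$ has a strictly positive density of class $C^{1,\infty}$ by Proposition~\ref{prop:smoothsol} and Lemma~\ref{finiteEntropy}(ii), so the reference density is well defined. The integrability of the drift against the flow, $\int_{t_0}^{T}\int_{\dbR^d}|b(x,m_t)|^2 m_t(x)\,\mathrm dx\,\mathrm dt<\infty$ for each $t_0\in(0,T)$, follows from the linear growth of $b$ and from $\sup_t\dbE[|X_t|^2]<\infty$ (Lemma~\ref{lem:uniL2}). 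Finally, because $a$ is constant, the ``finite-energy'' condition of~\cite{HP86}, namely $\int_{t_0}^{T}\int_{\dbR^d}|\nabla\cdot(a\,m_t)|^2 m_t^{-1}\,\mathrm dx\,\mathrm dt<\infty$, is exactly $\si^4\int_{t_0}^{T}\int_{\dbR^d}|\nabla\log m_t|^2 m_t\,\mathrm dx\,\mathrm dt<\infty$, which is precisely estimate~(i) of Lemma~\ref{Fisher}; and the local $L^1$ bounds on $\nabla m_t$ and $x\cdot\nabla m_t$ proved in the lemma immediately after Lemma~\ref{Fisher} supply whatever further local integrability is needed to justify the integrations by parts implicit in the theorem.

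The only delicate point — more a bookkeeping matter than a genuine obstacle — is the behaviour at the terminal time of the reversed process, i.e.\ as $t\downarrow0$ for $X$: since $m_0\in\cP_2(\dbR^d)$ need not admit a density, the Fisher information $\int|\nabla\log m_t|^2 m_t$ can blow up as $t\downarrow0$, which is precisely why Lemma~\ref{Fisher}(i) and its corollaries are stated only on $[t_0,T]$. I would handle this by applying \cite[Theorem 2.1]{HP86} to $X$ restricted to $[t_0,T]$ with the smooth, strictly positive initial law $m_{t_0}$, obtaining the reversed dynamics on the interval $[0,T-t_0]$ of the reversed clock, and then letting $t_0\downarrow0$: the expression for the reversed drift does not depend on $t_0$, so these weak solutions are mutually consistent and patch into a single weak solution of the stated SDE on the whole reversed interval, driven by a single Brownian motion $\widetilde W$.
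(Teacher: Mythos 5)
Your proposal is correct and follows exactly the route the paper takes: the paper gives no separate proof of this lemma, stating only that it follows directly from estimate (i) of Lemma \ref{Fisher} together with Theorem 2.1 of Haussmann and Pardoux \cite{HP86}, which is precisely the theorem you invoke and whose hypotheses you verify using the same preceding lemmas (linear growth of $b$, smoothness and positivity of the density from Proposition \ref{prop:smoothsol} and Lemma \ref{finiteEntropy}, and the finite-energy condition from Lemma \ref{Fisher}(i)). Your additional care about the possible blow-up of the Fisher information as $t\downarrow 0$, handled by working on $[t_0,T]$ and patching, is a sensible elaboration of a point the paper leaves implicit.
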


\ms

\begin{proof}[Proof of Theorem \ref{thm:Vdecrease}]
	By the It\^o formula and the Fokker-Plank equation \eqref{Fokker-Planck}, we have
	\beaa
	d \log m_{T-t}(\widetilde X_t) &=& \Big(- \frac{\pa_t m_{T-t}}{m_{T-t}}(\widetilde X_t) + \nabla \log(m_{T-t}(\widetilde X_t))
	\cd \big(  b(\widetilde X_t, m_{T-t}) + \si^2 \nabla \log m_{T-t} (\widetilde X_t) \big) \\
	&& \q\q\q\q +\frac12\si^2 \D \log (m_{T-t}(\widetilde X_t))	\Big) dt + \nabla \log(m_{T-t}(\widetilde X_t)) \cd d \widetilde W_t\\
	& = & \Big(\frac{\si^2}{2}\left|\frac{\nabla m_{T-t}}{m_{T-t}}(\widetilde X_t) \right|^2 - \nabla\cdot b(\widetilde X_t, m_{T-t})  \Big)d t + \nabla \log(m_{T-t}(\widetilde X_t)) \cd d \widetilde W_t.
	\eeaa 
	Next by  the It\^o formula we obtain
	\beaa
	\mathrm{d} U(X_t) =  \left( - \nabla U(X_t)\cdot b(X_t,m_t) + \frac{\sigma^2}{2}\Delta U(X_t) \right)\mathrm{d}t + \nabla U(X_t) dW_t.
	\eeaa
	Note that $d H(m_t) = d \dbE[ \log m_{t}(\widetilde X_{T-t}) + U(X_t)] $. Therefore, it follows from Lemma \ref{green's formula} that
	\bea		\label{FreeEnergyEq02}
	&&\mathrm{d}H(m_t) \notag\\
	&=&\dbE \left[ - \frac{\si^2}{2}\left|\frac{\nabla m_t}{m_t}(X_t) \right|^2 - b(X_t,m_t) \cd\frac{\nabla m_t}{m_t}(X_t) - \nabla U(X_t)\cdot b(X_t,m_t)  - \frac{\sigma^2}{2} \nabla U(X_t)\cd \frac{\nabla m_t}{m_t}(X_t)\right]\mathrm{d}t \notag \\
	& = & \dbE\left[ - \frac{\si^2}{2}\left|\frac{\nabla m_t}{m_t}(X_t) + \nabla U(X_t)\right|^2 - D_m F(X_t,m_t) \cd\left(\frac{\nabla m_t}{m_t}(X_t) + \nabla U(X_t)\right) \right]\mathrm{d}t \notag\\
	&=& \int_{\dbR^d} \left( - \frac{\si^2}{2}\left| \frac{\nabla m_t}{m_t}+ \nabla U(x)\right|^2 -D_m F(m_t,x) \cd \left(\frac{\nabla m_t}{m_t}+ \nabla U(x) \right) \right)m_t(x)\mathrm{d}x
	\eea
	Further, by the It\^o-type formula given by \cite[Theorem 4.14]{Carmona+Rene} and  Lemma \ref{green's formula}, we have
	\bea
	\mathrm{d}F(m_t) 
	&=& \int_{\mathbb{R}^d} \left(-|D_mF(m_t,x)|^2 - \frac{\sigma^2}{2}D_mF(m_t,x)\cd \nabla U(x) + \frac{\sigma^2}{2}\Tr(\nabla D_mF(m_t,x))\right) m_t\mathrm{d}x\mathrm{d}t \nonumber \\
	&=& \int_{\mathbb{R}^d} \left(-|D_mF(m_t,x)|^2 - \frac{\sigma^2}{2}D_mF(m_t,x)\cd\Big( \nabla U(x) +\frac{\nabla m_t}{m_t}\Big)\right) m_t\mathrm{d}x\mathrm{d}t. \label{FreeEnergyEq01}
	\eea		
	Finally, summing up the equation \eqref{FreeEnergyEq02} and \eqref{FreeEnergyEq01}, we obtain \eqref{eq: measureFlow}.
	%
\end{proof}


In order to prove there exists an invariant measure of \eqref{Langevin} equal to the minimizer of $V^\si$, we shall apply Lasalle's invariance principle.
Now we simply recall it in our context. Let $(m_t)_{t\in \dbR^+}$ be the flow of marginal laws of the solution to \eqref{Langevin}, given an initial law $m_0$. Define a dynamic system $S(t)[m_0]:=m_t$. We shall consider the so-called $w$-limit set:
\beaa
w(m_0):= \left\{\mu\in \cP_2(\dbR^d):~~\mbox{there exist}~t_n\rightarrow\infty~\mbox{such that}~~ \cW_2\Big(S(t_n)[m_0],\mu\Big)\rightarrow 0\right\}
\eeaa

\begin{prop}\label{prop:invprin}[Invariance Principle]
	Let Assumption \ref{A1} hold true and assume that $m_0\in \cup_{p>2}\cP_p(\dbR^d)$. Then the set $w(m_0)$ is nonempty, compact and  invariant, that is, 
	\begin{enumerate}[i)]
		\item for any $\mu\in w(m_0)$, we have $S(t)[\mu]\in w(m_0)$ for all $t\in \dbR^+$.
		\item for any $\mu\in w(m_0)$ and all $t\in \dbR^+$, there exits $\mu'\in w(m_0)$ such that $S(t)[\mu'] =\mu$.
	\end{enumerate}
\end{prop}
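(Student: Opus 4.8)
The plan is to adapt the classical proof of LaSalle's invariance principle to the metric space $(\cP_2(\dbR^d),\cW_2)$. First I would isolate the three structural ingredients that make the argument run. (a) The \emph{flow (semigroup) property} $S(t+s)[m]=S(t)\big[S(s)[m]\big]$ for all $s,t\ge 0$, which I would deduce from the strong well-posedness in Proposition \ref{prop:wellpose}: restarting \eqref{Langevin} at time $s$ from the law $m_s$ reproduces, by uniqueness, the shifted flow $(m_{s+r})_{r\ge 0}$. (b) The \emph{continuity of $S(t)$} on $\cP_2(\dbR^d)$ in the $\cW_2$ metric for each fixed $t$, which is precisely the stability estimate $\cW_2(m_t,m_t')\le C\,\cW_2(m_0,m_0')$ already contained in Proposition \ref{prop:wellpose}. (c) \emph{Precompactness of orbits}: by Lemma \ref{lem:uniL2}, when $m_0\in\cup_{p>2}\cP_p(\dbR^d)$ the orbit $\{S(t)[m_0]:t\ge 0\}$ lies in a fixed $\cW_2$-compact set $K\subset\cP_2(\dbR^d)$ of the form $\{\mu:\int|x|^{p'}\,\mathrm{d}\mu\le C\}$ with $p'>2$. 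I would also record the elementary remark that $K$ is closed and that, by weak lower semicontinuity of the $p'$-th moment, any $\cW_2$-limit of points of $K$ still has finite $p'$-moment; hence every element of $w(m_0)$ belongs to $\cup_{p>2}\cP_p(\dbR^d)$, so Lemma \ref{lem:uniL2} can be reapplied to orbits issued from such measures.

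With these in hand, for \emph{nonemptiness and compactness} I would invoke the standard identity $w(m_0)=\bigcap_{T\ge 0}\overline{\{S(t)[m_0]:t\ge T\}}$, closures taken in $\cW_2$: each set on the right is a closed subset of the compact $K$, hence compact and nonempty, and the family is nested decreasing in $T$, so the intersection is nonempty and compact. For \emph{positive invariance (i)}, given $\mu\in w(m_0)$ I would pick $t_n\to\infty$ with $\cW_2(S(t_n)[m_0],\mu)\to 0$, fix $t\ge 0$, and write $S(t+t_n)[m_0]=S(t)\big[S(t_n)[m_0]\big]$ by the flow property; continuity of $S(t)$ sends the right-hand side to $S(t)[\mu]$ while $t+t_n\to\infty$, so $S(t)[\mu]\in w(m_0)$. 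For \emph{negative invariance (ii)}, given $\mu\in w(m_0)$ and $t\ge 0$ I would pick $t_n\to\infty$ with $S(t_n)[m_0]\to\mu$ and $t_n\ge t$; the points $S(t_n-t)[m_0]$ lie in the compact $K$, so along a subsequence they converge to some $\mu'$, necessarily in $w(m_0)$ since $t_{n_k}-t\to\infty$, and then $S(t)[\mu']=\lim_k S(t)\big[S(t_{n_k}-t)[m_0]\big]=\lim_k S(t_{n_k})[m_0]=\mu$ by the flow property and continuity of $S(t)$.

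The main obstacle I anticipate is not in the limiting arguments — those are soft once precompactness and $\cW_2$-continuity are available — but in cleanly justifying the flow property for the McKean--Vlasov dynamics \eqref{Langevin}: because the drift depends on the current marginal law, the restart argument is less automatic than for a classical SDE and genuinely relies on the uniqueness statement of Proposition \ref{prop:wellpose}, together with the fact that $t\mapsto m_t$ is $\cW_2$-continuous (which follows from the weak continuity in Proposition \ref{prop:smoothsol} combined with the uniform higher-moment bound of Lemma \ref{lem:uniL2}). A secondary point requiring care is the bookkeeping that keeps every measure produced along the way inside $\cup_{p>2}\cP_p(\dbR^d)$, so that the compactness from Lemma \ref{lem:uniL2} remains applicable at each step; the moment lower-semicontinuity remark above takes care of this.
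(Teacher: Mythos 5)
Your proposal is correct and follows essentially the same route as the paper's proof: compactness of the orbit closure from Lemma \ref{lem:uniL2}, the representation $w(m_0)=\bigcap_{T\ge 0}\overline{\{S(t)[m_0]:t\ge T\}}$ for nonemptiness and compactness, and $\cW_2$-continuity of $S(t)$ combined with the semigroup property for both invariance statements. The only difference is that you make explicit the justification of the flow property via uniqueness for the McKean--Vlasov SDE and the persistence of higher moments under $\cW_2$-limits, points the paper uses implicitly.
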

\begin{proof}
	Under the upholding assumptions, it follows from Proposition \ref{prop:wellpose} that $S(t)$ is continuous with respect to the $\cW_2$-topology. By Lemma \ref{lem:uniL2}, we have \eqref{estimate:compact} with $p>2$, and thus $\big(S(t)[m_0]\big)_{t\in \dbR^+} = (m_t)_{t\in \dbR^+}$ live in a $\cW_2$-compact subset of $\cP_2(\dbR^d)$. 
	The desired result follows from the invariance principle, see e.g. \cite[Theorem 4.3.3]{Henry81}. 
	In order to keep the paper self-contained, we state the proof as follows. 
	
	First, for any $t\ge 0$, $(m_s)_{s\geq t}$ is relatively compact, hence $\overline{(m_s)_{s\geq t}}$ is compact. Since the arbitrary intersection of closed sets is closed, the set    
	\[
	w(m_0) = \bigcap_{t \geq 0}\overline{(m_s)_{s\geq t}} 
	\] 
	is compact.
	
	Next, let $ \mu\in w(m_0) $, by definition we know that there exists a sequence $ (t_N)_{N>0} $ such that $ S(t_N)[m_0]\to\mu $. Let $ t\in\dbR^+ $, by the continuity of $S(t):\cP_2(\dbR^d)\rightarrow \cP_2(\dbR^d)$, we have $ S(t+t_N)[m_0]\to S(t)[\mu] $ and therefore $ S(t)[\mu]\in w(m_0) $.
	
	Finally, for the second point, let $ t\in\dbR^+ $ and consider the sequence $ \big(S(t_N - t)[m_0]\big)_N $. Since $(m_t)_{t\in \dbR^+}$ live in a $\cW_2$-compact subset of $\cP_2(\dbR^d)$, there exists a subsequence $ (t_{N'}) $ and $ \mu'\in w(m_0) $ such that $ S(t_{N'}-t)[m_0]\to\mu' $. Again, by the continuity of $S(t)$, we have $ S(t)[\mu'] = \lim_{N'\rightarrow\infty} S(t_{N'}-t+t)m_0 = \mu $.
\end{proof}
\ms

\begin{proof}[Proof of Theorem \ref{thm:convergence}]
	\no{\it Step 1.}\q We first prove that $ m^*\in w(m_0) $. Since $w(m_0)$ is compact, there exists $\tilde m\in \argmin\limits_{m\in w(m_0)} V^\si(m) $. By Proposition \ref{prop:invprin}, for $t>0$ there exists  a probability measure $\mu\in w(m_0)$ such that $S(t)[\mu] = \tilde m$. By Theorem \ref{thm:Vdecrease},  for any $s>0$ we have
	\beaa
	V^\si\big( S(t+s) [\mu] \big) \le V^\si(\tilde m).
	\eeaa
	Since $w(m_0)$ is invariant, $S(t+s) [\mu] \in w(m_0)$ and thus $V^\si\big( S(t+s) [\mu] \big) = V^\si(\tilde m)$. Again by Theorem \ref{thm:Vdecrease}, we obtain
	\beaa
	0 = \frac{d V^\si \big(S(t) [\mu] \big)}{dt}  = - \int_{\mathbb{R}^{d}}\left| D_mF(\tilde m,x) + \frac{\sigma^2}{2}\frac{\nabla \tilde m}{\tilde m}(x) + \frac{\sigma^2}{2}\nabla U(x)\right|^2 \tilde m(x)\mathrm{d}x.
	\eeaa
	Since $\tilde m=S(t)[\mu] $ is equivalent to the Lebesgue measure (Proposition \ref{finiteEntropy}), we have
	\bea\label{eq:tildemfirstorder}
	D_mF(\tilde m,\cd) + \frac{\sigma^2}{2}\frac{\nabla \tilde m}{\tilde m} + \frac{\sigma^2}{2}\nabla U =0.
	\eea
	The probability measure $\tilde m$ is an invariant measure of \eqref{Langevin}, because it is a stationary solution to the Fokker-Planck equation \eqref{Fokker-Planck}. Meanwhile, by Proposition \ref{prop:firstorder} we have $\tilde m=m^*$. Therefore, $ m^*\in w(m_0) $.
	
	\ms
	\no{\it Step 2.}\q Since  $ m^*\in w(m_0) $, there exists  a subsequence, denoted by $(m_{t_n})_{n\in\dbN}$,  converging to $m^*$. We are going to prove that $V^\si(m^*)=\lim\limits_{n\rightarrow\infty} V^\si(m_{t_n})$. It is enough to prove $\int_{\dbR^d} m^* \log(m^*) \mathrm{d}x=\lim_{n\rightarrow\infty}\int_{\dbR^d} m_{t_n} \log(m_{t_n}) \mathrm{d}x$. By the lower-semicontinuity of entropy, it is sufficient to prove that
	\bea\label{eq:uppersemicont}
	\int_{\dbR^d} m^* \log(m^*) \mathrm{d}x \ge \limsup_{n\rightarrow\infty}\int_{\dbR^d} m_{t_n} \log(m_{t_n}) \mathrm{d}x
	\eea
	By \eqref{eq:tildemfirstorder}, we know that $-\log m^*$ is semi-convex, so we may apply the HWI inequality in \cite[Theorem 3]{OV00}:
	\bea\label{HWI}
	\int_{\dbR^d} m_{t_n} \Big(\log(m_{t_n})- \log(m^*)\Big) \mathrm{d}x
	\le \cW_2(m_{t_n}, m^*) \Big( \sqrt{I_n} + C \cW_2(m_{t_n}, m^*)\Big),
	\eea
	where $I_n$ is the relative Fisher information defined as
	\bea\label{eq:fisherestimate}
	I_n &: =& \dbE\left[ \left| \nabla \log\Big(m_{t_n}(X_{t_n}) \Big)-\nabla \log\Big(m^*(X_{t_n}) \Big)\right|^2\right]\notag\\
	&=& \dbE\left[ \left| \nabla \log\Big(m_{t_n}(X_{t_n}) \Big)+ \frac{2}{\si^2}D_m F(m^*, X_{t_n}) + \nabla U(X_{t_n}) \right|^2\right].
	\eea
	We are going to show that $\sup_n I_n <\infty$. First, since $D_m F$ is bounded and $\nabla U$ is of linear growth, by Lemma \ref{lem:uniL2} we have 
	\bea\label{eq:fisherestimate1}
	\sup_n \dbE\left[ \left|  \frac{2}{\si^2}D_m F(m^*, X_{t_n}) + \nabla U(X_{t_n}) \right|^2\right]<\infty.
	\eea
	Next, since $\nabla b$ is bounded, by Lemma \ref{lem:L2diff} and \eqref{eq:explicitefisher} we have for all $n$
	\bea\label{eq:fisherestimate2}
	\dbE\left[ \left| \nabla \log\Big(m_{t_n}(X_{t_n}) \Big) \right|^2\right] 
	&\le& 
	\inf_{0< s\le t_n} \frac{1}{s^2} \int_0^{s} C(1+r^2) \, \mathrm{d}r \notag\\
	&=&\inf_{0< s\le t_n}C\Big(\frac{1}{s} + \frac{s}{3}\Big)
	\le  \frac{2C}{\sqrt 3}, \q\q \mbox{for $t_n>\sqrt 3$},
	\eea
	where the constant $C$ does not depend on $n$. Combining \eqref{eq:fisherestimate}, \eqref{eq:fisherestimate1} and \eqref{eq:fisherestimate2} we obtain $\sup\limits_n I_n <\infty$. Now the HWI inequality \eqref{HWI} reads
	\beaa
	\int_{\dbR^d} m_{t_n} \Big(\log(m_{t_n})- \log(m^*)\Big) \mathrm{d}x
	\le C\cW_2(m_{t_n}, m^*) \Big( 1 +  \cW_2(m_{t_n}, m^*)\Big).
	\eeaa
	By letting $n\rightarrow\infty$, since $\cW_2(m_{t_n}, m^*)\rightarrow 0$, we obtain \eqref{eq:uppersemicont}.
	
	\ms
	\no{\it Step 3}.\q Finally we prove the convergence of the whole sequence $(m_t)_{t\in \dbR^+}$ towards $ m^* $, by showing that the set $w(m_0)$ is a singleton, namely $w(m_0)=\{m^*\}$. Since $V^\si(m_t)$ is non-increasing in $t$, there is a constant $c:= \lim\limits_{t\rightarrow\infty} V^\si(m_t)$.  Recall that in {\it Step 2} we proved $V^\si(m^*)=\lim\limits_{n\rightarrow\infty} V^\si(m_{t_n})$, so we obtain $c = V^\si(m^*)$. On the other hand, for any $\mu \in w(m_0)$ there is a  
	subsequence $(m_{t'_n})_{n\in\dbN}$ converging to $\mu$ and by the weak lower-semicontinuity of $ V^\sigma $ we have $V^\si(\mu) \leq \liminf\limits_{n\rightarrow \infty} V^\sigma  (m_{t'_n})=c$. Using the fact that  $m^* =  \argmin\limits_{m\in w(m_0)} V^\si(m)$,  we have
	\beaa
	V^\si(\mu) =V^\si(m^*) = c,  \q \mbox{for all $\mu\in w(m_0)$}.
	\eeaa
	Finally by the uniqueness of the minimiser of $ V^\sigma $, we have $w(m_0) = \{m^*\}$.
	
\end{proof}

\section*{Acknowledgement}
The authors would like to thank the anonymous referees for their valuable comments which have helped improve the clarity of the paper.

The third and fourth authors acknowledge the support of The Alan Turing Institute under the Engineering and Physical Sciences Research Council grant EP/N510129/1.

\appendix

\section{Appendix}

\label{sec appendix}

The following result regarding to the change of measure in the Wiener space is classic, see e.g. \cite{Benes}. For readers' convenience, we provide a transparent proof as follow. Our argument is largely inspired by the one in \cite[Lemma 4.1.1]{Bensoussan}.
\begin{lem}\label{measureEquivalent}
	Let a function $(t,x)\mapsto b(t,x)$ be Lipschitz continuous and of linear growth in $x$, and a process $X$ be the strong solution to the SDE:
	\beaa
	\mathrm{d}X_t = b(t,X_t)\mathrm{d}t + \si \mathrm{d}W_t.
	\eeaa
	Define the following Dol\'ean-Dade exponential for all $t\in\dbR^+$
	\begin{equation}
		\rho_t := \exp\left( \frac{1}{\sigma}\int^t_0 b(s,X_s)\mathrm{d}W_s - \frac{1}{2\sigma^2}\int^t_0|b(s,X_s)|^2\mathrm{d}s \right).
	\end{equation}
	Then we have $\dbE[\rho_t] = 1$ and thus $\rho$ is a martingale on any finite horizon.
\end{lem}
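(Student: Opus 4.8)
\emph{Proof proposal.} The plan is to prove $\dbE[\rho_t]=1$ by a localisation argument: truncate the drift, use Girsanov's theorem together with a moment estimate that is uniform in the truncation level to show that no mass is lost in the limit, and finally deduce the martingale property on any $[0,T]$ from the fact that a non-negative local martingale with constant expectation is a true martingale. Throughout I assume, as holds in all applications of this lemma in the paper, that $\dbE[|X_0|^2]<\infty$.

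First I would set $\t_n:=\inf\{s\ge0:|X_s|\ge n\}$ and $\rho^n_s:=\rho_{s\we\t_n}$. Since $b$ is continuous it is bounded, say by $K_n$, on $[0,t]\times\{|x|\le n\}$, so on $[\![0,t\we\t_n]\!]$ the integrand $b(s,X_s)$ is bounded; Novikov's criterion then applies and $\rho^n$ is a genuine martingale on $[0,t]$ with $\dbE[\rho^n_t]=1$. Hence $\dbQ_n:=\rho^n_t\cdot\dbP$ is a probability measure on $\cF_t$, and Girsanov's theorem gives that $\hat W^n_s:=W_s-\frac1\si\int_0^{s\we\t_n}b(r,X_r)\,\mathrm dr$ is a $\dbQ_n$-Brownian motion, whence $X_{s\we\t_n}=X_0+2\int_0^{s\we\t_n}b(r,X_r)\,\mathrm dr+\si\hat W^n_{s\we\t_n}$ for $s\le t$.

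The crucial step is a bound on $\dbE^{\dbQ_n}\big[\sup_{s\le t}|X_{s\we\t_n}|^2\big]$ that does not depend on $n$. From linear growth, $|X_{s\we\t_n}|\le|X_0|+C\int_0^s(1+|X_{r\we\t_n}|)\,\mathrm dr+\si\sup_{r\le t}|\hat W^n_r|$; moreover, since $X_0$ is $\cF_0$-measurable and $\rho^n_0=1$, one has $\dbE^{\dbQ_n}[|X_0|^2]=\dbE^{\dbP}[\rho^n_t|X_0|^2]=\dbE^{\dbP}[|X_0|^2]$, which is finite and independent of $n$. Squaring, taking $\dbE^{\dbQ_n}$, using Doob's inequality for $\hat W^n$ and Gronwall's lemma then produces $\dbE^{\dbQ_n}\big[\sup_{s\le t}|X_{s\we\t_n}|^2\big]\le C_t$ with $C_t$ independent of $n$. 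Since $|X_{\t_n}|=n$ on $\{\t_n\le t\}$, Markov's inequality gives $\dbQ_n(\t_n\le t)\le C_t/n^2\to0$. Using $\rho^n_t=\rho_t$ on $\{\t_n>t\}$, $\rho^n_t=\rho_{\t_n}$ on $\{\t_n\le t\}$, and $\{\t_n\le t\}\in\cF_t$,
\[
1=\dbE[\rho^n_t]=\dbE[\rho_t\,\mathbf 1_{\{\t_n>t\}}]+\dbE[\rho_{\t_n}\,\mathbf 1_{\{\t_n\le t\}}]=\dbE[\rho_t\,\mathbf 1_{\{\t_n>t\}}]+\dbQ_n(\t_n\le t).
\]
As $\t_n\uparrow\infty$ a.s., monotone convergence gives $\dbE[\rho_t\mathbf 1_{\{\t_n>t\}}]\to\dbE[\rho_t]$ while the last term vanishes, so $\dbE[\rho_t]=1$. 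Finally $\rho$ is a non-negative local martingale (a stochastic exponential, $b(s,X_s)$ being path-continuous hence locally bounded), thus a supermartingale; having constant expectation on $[0,T]$ it is a martingale there, for every finite $T$.

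The main obstacle is precisely the moment estimate under the family $(\dbQ_n)_n$, which become mutually singular as $n\to\infty$: it works because the Girsanov change of measure only alters the drift — which stays of linear growth with a constant uniform in $n$ — and leaves $\dbE[|X_0|^2]$ unchanged, so the Gronwall constant $C_t$ is genuinely independent of $n$.
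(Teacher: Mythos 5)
Your proof is correct, but it takes a genuinely different route from the one in the paper. The paper avoids stopping times and Girsanov entirely: it is a Bene\v{s}-type argument that applies It\^o's formula to the bounded truncations $\frac{\rho_t|X_t|^2}{1+\eps\rho_t|X_t|^2}$ and $\frac{\rho_t}{1+\eps\rho_t}$, uses Gronwall and Fatou to get the uniform bound $\dbE[\rho_t|X_t|^2]\le C$, and then shows by dominated convergence that $\dbE\big[\tfrac{\rho_t}{1+\eps\rho_t}\big]\to 1$ as $\eps\to0$ because the defect term $\dbE\big[\tfrac{\eps\rho_s^2|b|^2}{(1+\eps\rho_s)^3}\big]$ is dominated by $C\,\rho_s(1+|X_s|^2)$ and vanishes pointwise. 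You instead localise with $\t_n$, invoke Girsanov under $\dbQ_n=\rho^n_t\cdot\dbP$, and prove a second-moment bound for the stopped process under $\dbQ_n$ that is uniform in $n$ — the key observations being that the change of measure doubles the drift but preserves its linear growth, and that $\dbE^{\dbQ_n}[|X_0|^2]=\dbE^{\dbP}[|X_0|^2]$ by the tower property. Both arguments hinge on the same analytic fact (a second moment of $X$ survives the reweighting by $\rho$), and both implicitly use $\dbE[|X_0|^2]<\infty$, which holds in every application in the paper since $m_0\in\cP_2(\dbR^d)$. Your version is the classical Karatzas--Shreve-style proof and is arguably more transparent probabilistically; the paper's version is more self-contained (no Girsanov, no stopping) and computes everything with elementary It\^o/Gronwall/Fatou manipulations. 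One cosmetic point: in the multidimensional case the stochastic integral should be read as $b(s,X_s)\cdot\mathrm{d}W_s$, and Doob's inequality for $\hat W^n$ yields the constant $4dt$; neither affects your argument.
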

\begin{proof}
	First, we shall prove that there exists $ C>0 $ such that for all $ t\in\dbR^+ $, we have
	\begin{equation}\label{eq: square_bound}
		\dbE[\rho_t|X_t|^2]<C.
	\end{equation}
	By It\^o's formula, we have
	\begin{equation*}
		\mathrm{d}|X_t|^2 = (2X_tb(t,X_t) + \sigma^2)\mathrm{d}t +  2X_t\sigma\mathrm{d}W_t,
	\end{equation*}
	and
	\begin{equation*}
		\mathrm{d}(\rho_t|X_t|^2) = \rho_t\Big(4X_tb(t,X_t) + \sigma^2\Big)\mathrm{d}t +  \rho_t\left(\frac1\sigma|X_t|^2b(t,X_t)+2X_t\sigma\right)\mathrm{d}W_t,
	\end{equation*}
	and further
	\beaa
	\mathrm{d}\frac{\rho_t|X_t|^2}{1+\eps\rho_t|X_t|^2} &=& \frac{\rho_t}{(1+\eps\rho_t|X_t|^2)^2}\left(\frac1\sigma|X_t|^2b(t,X_t)+2X_t\sigma\right)\mathrm{d}W_t \\
	&&  + \frac{\rho_t}{(1+\eps\rho_t|X_t|^2)^2}\Big(4X_tb(t,X_t) + \sigma^2\Big)\mathrm{d}t \\
	&&- \frac{\eps\rho^2_t}{(1+\eps\rho_t|X_t|^2)^3}\left|\frac1\sigma|X_t|^2b(t,X_t)+2X_t\sigma\right|^2\mathrm{d}t.
	\eeaa
	Note that the integrand of the stochastic integral on the right hand side above is bounded, so the stochastic integral is actually a real martingale. Therefore, by taking the expectation on both sides and using the fact that $ b$ has linear growth in $x$, we get
	\beaa
	\frac{\mathrm{d}}{\mathrm{d}t}\dbE\left[ \frac{\rho_t|X_t|^2}{1+\eps\rho_t|X_t|^2} \right] &\leq& \dbE\left[ \frac{\rho_t}{(1+\eps\rho_t|X_t|^2)^2}\Big(4X_tb(t,X_t) + \sigma^2\Big) \right] \\
	&\leq& K\dbE\left[ \frac{\rho_t|X_t|^2}{1+\eps\rho_t|X_t|^2}+1 \right].
	\eeaa
	By Gr\"{o}nwall inequality, we get
	\begin{equation*}
		\dbE\left[ \frac{\rho_t|X_t|^2}{1+\eps\rho_t|X_t|^2} \right] \leq C,
	\end{equation*}
	for some constant $ C $ which does not depend on $ \eps $. By Fatou's lemma, we get \eqref{eq: square_bound}. 
	
	Next, by It\^o's formula, we have
	\begin{equation*}
		\mathrm{d}\frac{\rho_t}{1+\eps\rho_t} = \frac{\rho_tb(t,X_t)}{(1+\eps\rho_t)^2}\mathrm{d}W_t - \frac{\eps\rho^2_tb(t,X_t)^2}{(1+\eps\rho_t)^3}\mathrm{d}t.
	\end{equation*}
	By \eqref{eq: square_bound}, the stochastic integral above is a martingale, so taking the expectation on both sides, we get
	\begin{equation*}
		\dbE\left[ \frac{\rho_t}{1+\eps\rho_t} \right] = \frac1{1+\eps} -\int_{0}^{t}  \dbE\left[ \frac{\eps\rho^2_sb(s,X_s)^2}{(1+\eps\rho_s)^3}\right] \mathrm{d}s.
	\end{equation*}
	Due to the linear growth of $ b $, the term inside the expectation  on the right hand side is bounded by $ C\rho_s(|X_s|^2+1) $ for some constant $ C>0 $ independent of $\e$. By the dominated convergence theorem, we get
	\begin{equation*}
		\lim_{\eps\to0}\dbE\left[ \frac{\rho_t}{1+\eps\rho_t} \right] = 1.
	\end{equation*}
	To conclude, one only needs to note that
	$\lim_{\eps\to0}\dbE\left[ \frac{\rho_t}{1+\eps\rho_t} \right] = \dbE[\rho_t]$.
\end{proof}

\ms

\begin{lem}\label{lem:L2diff}
	Under Assumption \ref{assum:U} and \ref{A1}, the exponential martingale $\cE(b)$ is  conditionally $\dbL^2$-differentiable on $[t-t_0,t]$, i.e. the equation \eqref{EbL2diff} holds true, for all $t\ge t_0>0$.
\end{lem}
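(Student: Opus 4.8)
The plan is to write $\cE^b(X)=e^{-\Psi(X)}$ with
\[
\Psi(X):=\frac{1}{\si^2}\int_{t-t_0}^{t}b(X_s,m_s)\cd\mathrm{d}X_s+\frac{1}{2\si^2}\int_{t-t_0}^{t}|b(X_s,m_s)|^2\,\mathrm{d}s,
\]
first establish the conditional $\dbL^2$-differentiability of the exponent $\Psi$, and then transfer it to $\cE^b$ by a chain-rule argument. Throughout we fix $x_0$ and work under $\dbP^{t-t_0,x_0}_{\si,w}$, under which $X_s=x_0+\si(W_s-W_{t-t_0})$ on $[t-t_0,t]$, so $\sup_{s}|X_s|$ has Gaussian tails and all its polynomial moments are finite. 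We repeatedly use that $b(\cd,m_s)=D_mF(m_s,\cd)+\frac{\si^2}{2}\nabla U$ has at most linear growth in $x$, while by Assumption \ref{A1} and Assumption \ref{assum:U} (in particular $U\in C^\infty$, $\nabla U$ Lipschitz, $\nabla D_mF$ bounded and jointly continuous) the map $\nabla b(\cd,m_s)$ is bounded and continuous, uniformly in $s\in[t-t_0,t]$, by the regularity of $s\mapsto m_s$ from Section~\ref{sec mf lang}.

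Fix a Cameron--Martin path $h=\int_{t-t_0}^{\cd}\dot h_s\,\mathrm{d}s$ with $\dot h$ bounded predictable. Splitting $\Psi(X+\e h)-\Psi(X)$ into the effect of shifting the integrand of the stochastic integral, the effect of shifting its integrator, and the effect on the quadratic term, and dividing by $\e$, we obtain three contributions. The integrator-shift term is $\frac{1}{\si^2}\int_{t-t_0}^{t}b(X_s+\e h_s,m_s)\cd\dot h_s\,\mathrm{d}s$, which converges in $\dbL^2$ to $\frac{1}{\si^2}\int b(X_s,m_s)\cd\dot h_s\,\mathrm{d}s$ by dominated convergence with dominating function $C(1+\sup_s|X_s|)\in\dbL^2$. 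Writing $|b(X+\e h)|^2-|b(X)|^2=(b(X+\e h)-b(X))\cd(b(X+\e h)+b(X))$ and Taylor-expanding $b$ to first order, the quadratic term contributes the $\dbL^2$-limit $\frac{1}{\si^2}\int b(X_s,m_s)\cd(\nabla b(X_s,m_s)h_s)\,\mathrm{d}s$. The integrand-shift term equals $\frac{1}{\si^2}\int_{t-t_0}^{t}\big(\int_0^1\nabla b(X_s+r\e h_s,m_s)\,\mathrm{d}r\big)h_s\cd\mathrm{d}X_s$ and converges in $\dbL^2$ to $\frac{1}{\si^2}\int(\nabla b(X_s,m_s)h_s)\cd\mathrm{d}X_s$ by the It\^o isometry combined with bounded/dominated convergence, using continuity and boundedness of $\nabla b$. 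Finally a stochastic Fubini theorem (licit since $\nabla b$ is bounded) rewrites $\int_{t-t_0}^{t}(\nabla b(X_s,m_s)h_s)\cd\mathrm{d}X_s=\int_{t-t_0}^{t}\dot h_r\cd\big(\int_r^{t}\nabla b(X_s,m_s)^{\top}\mathrm{d}X_s\big)\mathrm{d}r$, and an ordinary Fubini does the same for the $\mathrm{d}s$-integral, putting the limit in the required form $\langle D\Psi(X),h\rangle=\int_{t-t_0}^{t}\dot h_r\cd D\Psi_r(X)\,\mathrm{d}r$ with
\[
D\Psi_r(X)=\frac{1}{\si^2}b(X_r,m_r)+\frac{1}{\si^2}\int_r^{t}\nabla b(X_s,m_s)^{\top}\mathrm{d}X_s+\frac{1}{\si^2}\int_r^{t}\nabla b(X_s,m_s)^{\top}b(X_s,m_s)\,\mathrm{d}s\in\dbL^p,\quad p\ge1 .
\]

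To pass from $\Psi$ to $\cE^b=e^{-\Psi}$, write $\frac{\cE^b(X+\e h)-\cE^b(X)}{\e}=\cE^b(X)\cd\frac{e^{-\e R_\e}-1}{\e}$ with $R_\e:=\e^{-1}\big(\Psi(X+\e h)-\Psi(X)\big)$; from the decomposition above $R_\e\to\langle D\Psi(X),h\rangle$ in $\dbL^2$, the stochastic-integral part of $R_\e$ is a martingale with quadratic variation bounded by a constant times $t_0$ (as $\nabla b,\dot h$ are bounded) hence sub-Gaussian, and the remaining parts grow at most linearly in $\sup_s|X_s|$, so $\sup_{|\e|\le\e_0}e^{c|R_\e|}\in\dbL^p$ for every $p$, whence $\frac{e^{-\e R_\e}-1}{\e}\to-\langle D\Psi(X),h\rangle$ in every $\dbL^q$. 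The one genuinely delicate point is the uniform-in-$\e$ integrability of $\cE^b(X+\e h)$ itself: since $b$ grows linearly, a naive Novikov/Cauchy--Schwarz bound for $\dbE[\cE^b(X+\e h)^p]$ would force $t_0$ or $\si$ to be small. This is circumvented by an It\^o integration by parts applied to the $\nabla U$ part of $-\int_{t-t_0}^{t}\frac{b(X_s+\e h_s,m_s)}{\si^2}\cd\mathrm{d}X_s$: writing $\int\nabla U(X_s+\e h_s)\cd\mathrm{d}X_s=\int\nabla U(X_s+\e h_s)\cd\mathrm{d}(X_s+\e h_s)-\e\int\nabla U(X_s+\e h_s)\cd\dot h_s\,\mathrm{d}s$ and applying It\^o's formula to $U(X_s+\e h_s)$ (which has the same quadratic variation as $U(X_s)$, $h$ being of finite variation), this $\nabla U$-contribution becomes $-\tfrac12 U(X_t+\e h_t)$ plus a bounded term (using $\Delta U$ bounded) plus terms of at most linear growth in $\sup_s|X_s|$; since $U(x)\ge C'|x|^2-C$ by the dissipativity in Assumption~\ref{assum:U}, the factor $e^{-\frac12 U(X_t+\e h_t)}$ has Gaussian decay in $X_t$, the $D_mF$ stochastic integral is a martingale with quadratic variation at most $C_F^2t_0/\si^2$ and hence sub-Gaussian, and $-\frac{1}{2\si^2}\int|b|^2\,\mathrm{d}s\le0$; therefore $\sup_{|\e|\le\e_0}\dbE^{\dbP^{t-t_0,x_0}_{\si,w}}\big[\cE^b(X+\e h)^p\big]<\infty$ for every $p\ge1$ and every $t_0>0$. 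Combining this with the bounds on $R_\e$ and with $D\Psi_r\in\dbL^p$ for all $p$, Cauchy--Schwarz yields $\cE^b(X)\frac{e^{-\e R_\e}-1}{\e}\to-\cE^b(X)\langle D\Psi(X),h\rangle$ in $\dbL^2$, so $\cE^b$ is conditionally $\dbL^2$-differentiable with $D\cE^b_r(X)=-\cE^b(X)\,D\Psi_r(X)\in\dbL^2$, which is exactly \eqref{EbL2diff}. The steps I expect to demand the most care are the stochastic Fubini recasting of the derivative in the Cameron--Martin form $\int\dot h_r\cd(\cd)\mathrm{d}r$, and above all the uniform exponential integrability of $\cE^b(X+\e h)$ obtained through the integration-by-parts trick.
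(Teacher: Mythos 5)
Your proposal is correct and follows essentially the same route as the paper: differentiate the exponent, transfer to $\cE^b$ by the chain rule, and resolve the crucial $\dbL^p$-integrability of $\cE^b$ --- which a naive Novikov-type bound cannot give because $\nabla U$ grows linearly --- by applying It\^o's formula to $U$ along the path and invoking the lower bound on $U$ together with the boundedness of $\Delta U$. The only differences are cosmetic: where the paper cites Nualart (Lemma 1.3.4, Propositions 1.3.8 and 1.3.11) for the $\dbL^2$-differentiability of the exponent and its Cameron--Martin representation, you carry out that computation by hand, and you treat the uniformity in $\e$ of the moment bounds somewhat more explicitly than the paper does.
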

\begin{proof}
	Without loss of generality, we may assume $t=t_0$. Under the upholding assumptions, the process $ (b_t)_{t\in[0,t_0]} $  is $ \dbL^2 $-differentiable.
	By \cite[Lemma 1.3.4]{Nualart}, we know that $\zeta (X) : =- \int_{0}^{t_0} \frac{b_s}{\si^2} \mathrm{d}X_s -\int_{0}^{t_0}\frac12 |\frac{b_s}{\si}|^2 ds$ is $\dbL^2$-differentiable for any $t_0>0$, namely there exists $D \z$ such that
	\beaa
	\frac{\zeta(X+\e h) - \zeta(X)}{\eps} - \langle D\z(X), h\rangle \rightarrow 0, \q\mbox{in $\dbL^2(\dbP^{0,x}_{\si,w})$ for all $x\in\dbR^d$, as $\e\rightarrow 0$}. 
	\eeaa
	By Proposition 1.3.8 and Proposition 1.3.11 from \cite{Nualart}, we may compute $D\z$ explicitly:
	\bea\label{eq:Dzeta}
	D\z(X) = -\int_{0}^{t_0}\left(\frac{b_s}{\sigma^2} + \int_{s}^{t_0}\frac{\nabla b_r}{\sigma^2}(\mathrm{d}X_r + b_r\mathrm{d}r)\right)\mathrm{d}s.
	\eea
	Note that $\cE^b = e^\z$. Therefore, we have
	\beaa
	\cE^b(X+\e h) - \cE^b(X) = \int_0^\eps \langle \cE^b(X+s h) D\z(X+sh), h\rangle\mathrm{d}s, \q\mbox{$\dbP^{0,x}_{\si,w}$-a.s. for all $x\in\dbR^d$}. 
	\eeaa
	In order to prove \eqref{EbL2diff}, it is sufficient to prove that for all $x\in \dbR^d$
	\beaa
	\sup_{s\le 1}\dbE^{\dbP^{0,x}_{\si,w}}\Big[ \big| \langle \cE^b(X+s h) D\z(X+sh), h\rangle \big|^p \Big] <\infty,\q\mbox{for some $p>2$.}
	\eeaa
	By the form \eqref{eq:Dzeta}, we have $ \langle D\z(X+sh), h\rangle \in \cap_{q>1} \dbL^q (\dbP^{0,x}_{\si,w})$, so it is enough to show
	\bea\label{goal:EbLp}
	\dbE^{\dbP^{0,x}_{\si,w}}\Big[ \big| \cE^b(X)  \big|^p \Big] <\infty,\q\mbox{for some $p>2$.}
	\eea
	Further, note that
	\beaa
	\big| \cE^b(X)  \big|^p 
	= e^{-p \int_{0}^{t_0}\big(\si^{-2} D_m F(m_s, X_s) +  \nabla U(X_s)  \big) \mathrm{d}X_s - \frac{p}2 \int_{0}^{t_0} \frac{|b_s|^2}{\si^2} \mathrm{d}s}.
	\eeaa
	Since $D_m F$ is bounded, in order to prove \eqref{goal:EbLp}, it is enough to show that
	\beaa
	\dbE^{\dbP^{0,x}_{\si,w}}\Big[ e^{-p \int_{0}^{t_0}  \nabla U(X_s)  \mathrm{d}X_s} \Big] <\infty ,\q\mbox{for some $p>2$.}
	\eeaa
	By It\^o formula, we obtain
	\beaa
	\dbE^{\dbP^{0,x}_{\si,w}}\Big[ e^{-p \int_{0}^{t_0}  \nabla U(X_s)  \mathrm{d}X_s} \Big] 
	= \dbE^{\dbP^{0,x}_{\si,w}}\Big[ e^{-p  \Big( U(X_{t_0}) - U(x) -  \int_{0}^{t_0} \frac{\si^2}2 \Delta U(X_s)  \mathrm{d}s \Big)} \Big] <\infty,
	\eeaa
	where we use the fact that $U \ge  -C$ for some $C> 0$ and $\D U$ is bounded.
\end{proof}

\ms

\end{document}